\documentclass[a4paper, 10pt]{article}

\usepackage[a4paper, total={6in, 9.5in}]{geometry}
\usepackage[utf8]{inputenc}
\usepackage{amssymb}
\usepackage{amsthm}
\usepackage{amsmath}
\usepackage{amsfonts}

\usepackage{graphicx}

\usepackage{titlesec}

\usepackage{comment}
\usepackage{titling}
\usepackage{tikz-cd}

\usepackage{enumitem}

\usepackage{graphicx}
\usepackage{hyperref}
\usepackage{mathrsfs}
\usepackage{mathtools}

\usepackage[mathscr]{euscript}

\usepackage{stmaryrd}


\titlelabel{\thetitle.\quad}
\titleformat{\subsubsection}[runin]{\normalfont\bfseries}{\thesubsubsection.}{3pt}{}
\setcounter{tocdepth}{2}

\setcounter{section}{-1}
\newcounter{cislo} \numberwithin{cislo}{section}

\numberwithin{equation}{section}

\usepackage{mathabx,epsfig}

\usepackage{xparse}
\usepackage{etoolbox}
\usepackage{letltxmacro}
\makeatletter
\AtBeginDocument{%
\def\strip@@parentheses(#1){#1}
\LetLtxMacro\enumerate@@item\item
\AtBeginEnvironment{enumerate}{%
  \RenewDocumentCommand{\item}{o}{%
    \IfValueTF{#1}{
      \enumerate@@item[#1]%
      \protected@edef\@currentlabel{\strip@@parentheses#1}
    }{%
      \enumerate@@item
    }%
  }%
}
}
\makeatother

\newtheorem{theorem}[cislo]{Theorem}
\newtheorem{lemma}[cislo]{Lemma}
\newtheorem{proposition}[cislo]{Proposition}
\newtheorem{corollary}[cislo]{Corollary}

\newtheorem{observation}[cislo]{Observation}

\newtheorem*{maintheorem}{Theorem}

\theoremstyle{definition}
\newtheorem{definition}[cislo]{Definition}
\newtheorem{notation}[cislo]{Notation}
\newtheorem{remark}[cislo]{Remark}

\newtheorem{example}[cislo]{Example}

\newtheorem{discussion}[cislo]{Discussion}

\newtheorem{construction}[cislo]{Construction}
\newtheorem{setup}[cislo]{Setup}

\newtheorem{recollection}[cislo]{Recollection}

\theoremstyle{remark}

\thispagestyle{empty}

\let\top\relax
\newcommand{\top}{\mathrm{top}}
\newcommand{\st}{\mathrm{st}}

\newcommand{\llp}{(\!(}
\newcommand{\rrp}{)\!)}
\newcommand{\llb}{\llbracket}
\newcommand{\rrb}{\rrbracket}

\newcommand{\cn}{\mathrm{cn}}

\newcommand{\ans}{\mathrm{ans}}
\newcommand{\qp}{\mathrm{qp}}

\DeclareMathOperator{\Sch}{Sch}

\DeclareMathOperator{\RG}{R\Gamma}

\DeclareMathOperator{\Rep}{Rep}

\DeclareMathOperator{\Sp}{Sp}
\DeclareMathOperator{\CycSp}{CycSp}

\DeclareMathOperator{\mo}{mod}

\let\lim\relax
\DeclareMathOperator*{\lim}{lim}

\DeclareMathOperator{\coker}{coker}

\DeclareMathOperator{\Qcoh}{QCoh}
\DeclareMathOperator{\Vect}{Vect}
\DeclareMathOperator{\Irr}{Irr}

\DeclareMathOperator{\fib}{fib}

\newcommand{\eB}{\mathscr{B}}

\newcommand{\eV}{\mathscr{V}}

\newcommand{\cl}{\mathrm{cl}}

\let\inf\relax
\newcommand{\inf}{\mathrm{inf}}
\newcommand{\cdh}{\mathrm{cdh}}

\newcommand{\HCn}{HC^{-}}
\newcommand{\HC}{HC}
\newcommand{\LHH}{L_{\mathrm{cdh}}HH}
\newcommand{\LHCn}{L_{\mathrm{cdh}}HC^{-}}
\newcommand{\LHC}{L_{\mathrm{cdh}}HC}

\DeclareMathOperator{\id}{id}

\DeclareMathOperator{\Spec}{Spec}
\DeclareMathOperator{\red}{red}
\DeclareMathOperator{\chara}{char}

\DeclareMathOperator{\A}{\mathbb{A}}
\let\P\relax
\DeclareMathOperator{\P}{\mathbb{P}}
\DeclareMathOperator{\Gr}{\mathbf{Gr}}

\DeclareMathOperator{\dStk}{dStk}

\DeclareMathOperator{\Cat}{Cat}

\DeclareMathOperator{\Flag}{Flag}
\DeclareMathOperator{\Grass}{Grass}

\DeclareMathOperator{\Fix}{Fix}

\newcommand{\Gm}{\mathbb{G}_m}

\newcommand{\GL}{GL}

\newcommand{\lop}{\mathrm{L^+}}

\newcommand{\fl}{\mathrm{fl}}

\let\O\relax
\DeclareMathOperator{\O}{\mathscr{O}}
\DeclareMathOperator{\eL}{\mathscr{L}}
\DeclareMathOperator{\eE}{\mathscr{E}}
\DeclareMathOperator{\eF}{\mathscr{F}}

\DeclareMathOperator{\eC}{\mathscr{C}}
\DeclareMathOperator{\eD}{\mathscr{D}}
\DeclareMathOperator{\eH}{\mathscr{H}}
\DeclareMathOperator{\eX}{\mathscr{eX}}

\DeclareMathOperator{\F}{\mathbb{F}}
\DeclareMathOperator{\Z}{\mathbb{Z}}
\DeclareMathOperator{\Q}{\mathbb{Q}}
\DeclareMathOperator{\N}{\mathbb{N}}
\DeclareMathOperator{\C}{\mathbb{C}}

\DeclareMathOperator{\rank}{rank}

\DeclareMathOperator{\pt}{pt}

\newcommand{\perf}{\mathrm{perf}}

\DeclareMathOperator{\Perf}{Perf}

\usepackage[style=alphabetic, maxnames = 5, maxalphanames=5]{biblatex}
\usepackage[style=alphabetic]{biblatex}
\addbibresource{bibliography_simple_varieties.bib}
\renewbibmacro{in:}{}

\title{Equivariant localizing invariants of simple varieties}
\author{Jakub Löwit}
\date{}

\begin{document}

\maketitle
\begin{abstract}
We define a certain class of {\it simple varieties} over a field $k$ by a constructive recipe and show how to control their (equivariant) truncating invariants. Consequently, we prove that on simple varieties:
\begin{itemize}
    \setlength\itemsep{0em}
    \item if $k=\overline{k}$ and $\chara k = p$, the $p$-adic cyclotomic trace is an equivalence,
    \item if $k = \Q$, the Goodwillie--Jones trace is an isomorphism in degree zero,
    \item we can control homotopy invariant $K$-theory $KH$, which is equivariantly formal and determined by its topological counterparts.
\end{itemize}
\noindent Simple varieties are quite special, but encompass important singular examples appearing in geometric representation theory. We in particular show that both finite and affine Schubert varieties for $GL_n$ lie in this class, so all the above results hold for them.
\end{abstract}

\tableofcontents

\section{Introduction}
\subsection{Motivation}
The aim of this note is to record a technique for comparing and computing equivariant localizing invariants on a simple class of (often singular) varieties arising in geometric representation theory. 

Let $k$ be a base field and $G$ be an algebraic group over it. We define classes $\eB^G_k$ and $\eC^G_k$ of certain proper $G$-equivariant $k$-schemes by a simple constructive recipe and call them {\it simple varieties}. (For the main results, we will assume that $G$ is nice -- for example an $n$-dimensional split torus $T = \Gm^n$. More generally, any linearly reductive group $G$ qualifies.)

%
Although such varieties are rather special, we nevertheless recover singular varieties of interest in geometric representation theory: {\it finite} and {\it affine Schubert varieties} for $\GL_n$ are the most prominent examples.
In fact, these varieties are the original source of our motivation. The understanding of their (equivariant) cohomological invariants is an important topic in geometric representation theory; the most delicate information is controlled by their singularities. For instance, their intersection cohomology governs the representation theory of the algebraic group $\GL_n$ through the geometric Satake equivalence; this has further equivariant extensions. Such phenomena are expected to generalize to more complicated topological invariants.

One source of interesting invariants appearing in algebraic geometry are the {\it localizing invariants}. The most important examples come either from $K$-theory -- such as algebraic $K$-theory $K(-)$ itself, its homotopy invariant version $KH(-)$, or its topological realization $K_{\top}(-)$ over $\C$; or from Hochschild homology -- such as Hochschild homology $HH(-/k)$, negative cyclic homology $HC^{-}(-/k)$, topological cyclic homology $TC(-)$, and other variants. Localizing invariants can be evaluated on any derived stack through its category of perfect complexes. In particular, we can evaluate them on a global quotient $X/G$ of any $G$-equivariant variety $X$, obtaining their {\it equivariant} versions.

\subsection{Main results}
The starting point of this project is to understand equivariant $K$-theory of affine Schubert varieties and trace maps from it, with a view towards geometric representation theory. In \cite{Low24} we described the relationship between $T$-equivariant $K$-theory and $T$-equivariant Hochschild homology of affine Schubert varieties after perfection in characteristic $p$. It turns out that our technique can be pushed further to control any truncating invariant (without perfection, in any characteristic). 

This has two kinds of applications. On one hand, we can concretely control and compute such truncating invariants, most notably $KH(-)$. These are in particular equivariantly formal in a rather strong sense -- while such formality statements were expected, they were missing until now. On the other hand, we can establish isomorphisms between different localizing invariants (without really computing them) as long as their difference is truncating -- most notably between $K$-theory and variants of topological cyclic homology.

An abridged and simplified version of our present results goes as follows.
\begin{maintheorem}
Let $k$ be a field and $G$ a nice  (or linearly reductive) group over it\footnote{See Remark \ref{remark: conditionally G lineraly reductive} and refereces there for discussion of these assumptions.}. We construct classes $\eB^G_k \subseteq \eC^G_k$ of $G$-equivariant projective $k$-schemes, which for example contain:
\begin{itemize}
    \item projective spaces, Grassmannians, partial flag varieties,
    \item classical Schubert varieties in Grassmannians,
    \item affine Schubert varieties in the $GL_n$ affine Grassmannian.
\end{itemize}
For such varieties, the following statements hold:
\begin{itemize}
    \item If $k$ is an algebraically closed field of characteristic $p$ and $X \in \eC^G_k$, then the mod-$p$ cyclotomic trace induces an equivalence of ring spectra
    \begin{equation*}
        K^G(X; \F_p) \xrightarrow{\simeq} TC^G(X; \F_p).
    \end{equation*}
    Similarly for the $p$-adic version.
    \item If $k = \Q$ and $X \in \eB^G_{\Q}$, then the Goodwillie--Jones trace induces a ring isomorphism
    \begin{equation*}
        K^G_0(X; \Q) \xrightarrow{\cong} (HC^{-})^G_0(X/ \Q).    
    \end{equation*}
    We further get a decomposition $K^G_i(X) \xrightarrow{\cong} KH^G_i(X) \oplus (HC^{-})^G_i(X/ \Q)$ for all $i \geq 1$.
    \item For any $k$ and $X \in \eB^G_k$, there is a natural ring isomorphism
    \begin{equation*}
        KH^G_0(X) \underset{K_0(\pt)}{\otimes} K_{\bullet}(\pt) \xrightarrow{\cong}  KH^G_{\bullet}(X)    
    \end{equation*}
    and $KH^G_0(X)$ is finite projective over $K^G_0(\pt)$. 
    In other words, the ring spectrum $KH^G(X)$ is connective and finite flat over $K^G(\pt)$. When $k=\C$, we can further compare to topological $K$-theory: $KH^G_0(X) \cong K^G_{\top, 0}(X)$.
\end{itemize}
\end{maintheorem}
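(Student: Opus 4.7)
The plan is to reduce all three bullets to a single structural principle: for a truncating localizing invariant $\eE$, the equivariant value $\eE^G(X)$ for $X \in \eC^G_k$ is computable in an explicit way, and on the smaller class $\eB^G_k$ is \emph{equivariantly formal} in the sense that it is obtained from $K^G_0(X)$ by base change along $K(\pt) \to \eE(\pt)$. Each bullet records a comparison map whose fiber is known to be truncating: $\fib(K(-;\F_p) \to TC(-;\F_p))$ on $\F_p$-algebras (Clausen--Mathew--Morrow, building on Dundas--Goodwillie--McCarthy); the Goodwillie--Jones trace fiber $\fib(K_\Q \to HC^-_\Q)$ on connective $\Q$-algebras (Corti\~nas' theorem on the KABI conjecture); and $\fib(K \to KH)$ in general (Land--Tamme). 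All three bullets therefore reduce to applying the structural principle to these specific truncating invariants.

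I would prove the structural principle by induction along the constructive recipe that defines $\eB^G_k \subseteq \eC^G_k$. At each allowed step (equivariant projective bundle, blowup along a regular equivariant center, gluing in a cdh square, quotient by a suitable subgroup) I verify the corresponding descent formula for equivariant truncating invariants; this combines cdh descent and nil-invariance for truncating invariants with the behavior of $\Perf$ under the step, applied on the quotient stack $X/G$. The base cases $\pt/G$ and equivariant affine/projective spaces are computed via $\Rep(G)$ and the projective bundle formula. For $\eB^G_k$, the inductive step is refined to track a canonical base change along $K(\pt) \to \eE(\pt)$, upgrading the computation to strong equivariant formality.

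Granting this, the three bullets follow almost mechanically. For the first, the CMM truncating fiber applied to $X/G$ with $X \in \eC^G_k$ vanishes, since it vanishes on the base cases $\pt/G$ in characteristic $p$ and propagates through the recipe by the structural theorem; passing to the limit over $\F_p$-coefficients gives the $p$-adic version, both sides being derived $p$-complete. For the second, the Corti\~nas fiber applied to $X/G$ for $X \in \eB^G_\Q$ is controlled by $\eB^G_k$-formality, yielding the ring isomorphism on $K^G_0$; in degrees $i \geq 1$ the rational splitting of $K$-theory into its $KH$ summand (from a smooth resolution produced by the recipe) and its nilpotent summand identified with $HC^-$ by Corti\~nas--Goodwillie provides the displayed direct-sum decomposition. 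For the third, $KH^G$ is itself truncating (since $\fib(K \to KH)$ is), and the $\eB^G_k$-formality then simultaneously gives connectivity, finite flatness over $K^G(\pt)$, and the base-change formula. The topological comparison over $\C$ is obtained by observing that $K^G_\top$ satisfies an analogous formality on the analytic Schubert-type cell structure of $X(\C)/G(\C)$, so the two bases $KH^G_0(X)$ and $K^G_{\top,0}(X)$ are matched through a common equivariant cell decomposition.

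The main obstacle will be proving the structural principle: carefully verifying that every step in the recipe of $\eC^G_k$, including the singular gluings and potentially non-smooth centers that are not amenable to standard smooth descent, preserves equivariant truncating-invariance; and then refining this check to obtain full equivariant formality on the subclass $\eB^G_k$. A secondary delicate point is the compatibility of Corti\~nas' rational theorem with the equivariant passage $(-)^G$, which accounts for the linear reductivity hypothesis on $G$ flagged in the footnote of the main theorem.
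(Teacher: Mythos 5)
Your overall strategy---recast each comparison as the vanishing or control of a truncating invariant, verify it on $BG$ via the decomposition of $\Rep(G)$, and propagate through the constructive recipe using semiorthogonal decompositions, invariance under derived/nilpotent thickenings, and equivariant cdh descent---is exactly the paper's (its Theorems on vanishing, degreewise comparison, and formality for truncating invariants). One technical point you gloss over: $TC(-;\F_p)$ is not finitary, so commuting it past the infinite direct-sum decomposition of $\Perf(BG)$ is not a formal finitariness argument; it requires connectivity of $THH$ plus the Clausen--Mathew--Morrow theorem that $TC(-;\F_p)$ commutes with colimits of connective cyclotomic spectra.

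Two of your sub-arguments have genuine gaps. First, for the decomposition $K^G_i(X)\cong KH^G_i(X)\oplus (HC^{-})^G_i(X/\Q)$ in degrees $i\ge 1$: there is no general ``rational splitting of $K$-theory into its $KH$ summand and a nilpotent summand identified with $HC^{-}$''. What is true in general is that $K$ is the pullback of $KH$ and $HC^{-}(-/\Q)$ over the cdh-sheafified theory $\LHCn(-/\Q)$; the direct sum appears only when the gluing datum is vacuous, i.e.\ when $(\LHCn)^G_i(X/\Q)=0$ for $i\ge 1$. That vanishing is itself a nontrivial statement, proved by first exhibiting $\LHCn(-/\Q)$ as a truncating invariant of $k$-linear categories and then running the same degreewise machine on it; your proposal omits this auxiliary invariant entirely, and without it the splitting does not follow (the paper even gives an example where the $HC^{-}$ summand is nonzero, so the decomposition is not a formal triviality). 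Second, for $KH^G_0(X)\cong K^G_{\top,0}(X)$ over $\C$: matching the two sides ``through a common equivariant cell decomposition'' would require $KH$ of a singular variety to be computed from its affine paving, which is not known a priori---it is only a posteriori a consequence of the comparison. The correct route is in the same spirit as everything else: $K_{\top}$ is itself a finitary truncating invariant (Blanc, Konovalov), the natural map $KH\to K_{\top}$ induces an isomorphism on $\pi_0$ of the point, and the degreewise propagation theorem on $\eB^G_{\C}$ then yields the isomorphism in degree zero.
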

\begin{proof}
See Example \ref{example: partial flag varieties}, Lemma \ref{lemma: finite schubert varieties are simple} and Lemma \ref{example: affine schubert varieties are simple} for proofs that the mentioned varieties are simple. The postulated equivalences are then respectively given by Theorem \ref{corollary: K and TC}, Theorem \ref{theorem: rational goodwillie--jones trace in degree zero} (with Proposition \ref{proposition: direct sum in positive degrees}) and Theorem \ref{corollary: KH of simple varieties} (with Proposition \ref{theorem: isomorphisms with topological theories in degree zero}). The variant for linearly reductive $G$ is discussed in Appendix \ref{appendix: actions of linearly reductive G}.
\end{proof}

\begin{remark}[Assumptions on $G$]\label{remark: conditionally G lineraly reductive}
The above theorem works cleanest for \textit{nice} groups $G$ over $k$, see \S \ref{section: an interlude on groups and representations}. This covers algebraic tori $T = \Gm^{n}$, groups of roots of unity $\mu_{\ell}$ and finite groups of order coprime to $\chara k$ (as well as their extensions, subquotients and forms). The technical restriction boils down to \cite[Corollaries 5.2.3 and 5.2.6]{ES21} and its variants, which are applicable to quotients by nice groups only. 

However, the machinery of \cite{LS25} together with \cite{KR18, BKRS22, Kha18} allows to extend the aforementioned results to quotients by \textit{linearly reductive groups}; we outline the arguments in Appendix \ref{appendix: actions of linearly reductive G}. With this extension, all our arguments go through without change for any linearly reductive group $G$ over $k$. This does not make a difference if $\chara k = p$ (where the notions of nice and linearly reductive coincide), but considerably extends the applicability if $\chara k = 0$ to any reductive group (such as $G = GL_n$). 
\end{remark}

\begin{remark}[Globality]\label{remark: globality}
Let us highlight the following: our results are global. All the considered schemes are proper over $k$ and our arguments stay within that realm. Even though localizing invariants satisfy Zariski (even Nisnevich) gluing, the desired isomorphisms do not hold Zariski locally. It is interesting that such isomorphisms nevertheless hold globally in naturally occurring singular examples.   
\end{remark}

\subsection{Discussion and context}
Let us emphasize that localizing invariants are highly sensitive to singularities -- to the extent that their values are known only in the simplest singular cases. 
Even if the varieties in question have affine pavings (i.e. are cellular), there is no obvious method for the computations and comparisons of the localizing invariants we care about. The only way of attack is by detailed understanding of resolutions; this can be quite challenging. In effect, not many computations of equivariant $K$-theory nor $KH$ in singular situations are known. We hope to contribute towards filling in this gap.

\subsubsection{Discussion of results.}
Let us discuss the three comparisons from our main theorem in more detail. We emphasize that all these comparisons are compatible with the ring structures on the invariants at hand. 
\begin{itemize}
    \item The $p$-adic cyclotomic trace \S \ref{section: the p-adic cyclotomic trace}.
    \newline The cyclotomic trace $K(-) \to TC(-)$ is the closest known approximation of algebraic $K$-theory by differential forms. In characteristic $p$ with mod-$p$ coefficients, this approximation is rather good. Nevertheless, it is known to be an equivalence only in specific local examples -- see Remark \ref{remark: non-equivariant case is interesting} -- and it is not an equivalence in general. Theorem \ref{corollary: K and TC} produces interesting (equivariant) singular projective examples where this is the case: for any $\eC^G_k$, the map
    \begin{equation*}
        K^G(X; \F_p) \xrightarrow{\simeq} TC^G(X; \F_p)
    \end{equation*}    
    is still an equivalence -- we are not aware of other nontrivial results of this form.

    While the above generality is completely sufficient to cover the examples of our interest, the theorem easily generalizes to a slightly bigger class $\eC_p$ of algebraic stacks, including some mixed-characteristic situations. We record this into a separate discussion in Theorem \ref{theorem: p-adic cyclotomic trace of Cp}.
    
    \item Goodwillie--Jones trace \S \ref{section: rational goodwillie--jones trace in degree zero}.
    \newline When working over $\Q$, the cyclotomic trace specializes to the historically older Goodwillie--Jones trace $K(-) \to HC^{-}(-/\Q)$ and we prove the isomorphisms
    \begin{equation*}
        K^G_0(X; \Q) \xrightarrow{\cong} (HC^{-})^G_0(X/ \Q)    
    \end{equation*}
    for $X \in \eB^G_k$ in Theorem \ref{theorem: rational goodwillie--jones trace in degree zero}. We further get a somewhat counterintuitive decomposition 
    \begin{equation*}
        K^G_i(X) \xrightarrow{\cong} KH^G_i(X) \oplus (HC^{-})^G_i(X/ \Q), \qquad i \geq 1. 
    \end{equation*}
    in Proposition \ref{proposition: direct sum in positive degrees}; this latter argument requires to use the supplementary truncating invariant $\LHCn(-/ \Q)$ reviewed in Appendix \ref{appendix: cdh sheaffified negative cyclic homology}. We illustrate that this decomposition can be nontrivial in Example \ref{example: the projective cone of projective line}.
    
    \item Homotopy invariant $K$-theory \S \ref{section: homotopy invariant K-theory and equivariant formality}. 
    \newline The closest well-behaved approximation of algebraic $K$-theory of singular varieties is its homotopy invariant version $KH$. A reader interested in explicit ring presentations of equivariant invariants may want to focus on this part. Although no computations of $KH$ in our setup appear in the literature, one expects to get reasonable results. 
    This is indeed what we obtain: the natural identification
    \begin{equation*}
        KH^G_0(X) \underset{K_0(\pt)}{\otimes} K_{\bullet}(\pt) \xrightarrow{\cong}  KH^G_{\bullet}(X)    
    \end{equation*}    
    from Theorem \ref{corollary: KH of simple varieties} shows that $KH^G(X)$ is equivariantly formal in a strong sense for any $X \in \eB^G_k$, see also Remark \ref{remark: change of group and equivariant formality}. More conceptually, we prove that the ring spectrum $KH^G(X)$ is connective and finite flat over $K^G(\pt)$. Equivariant formality statements of this form for $KH$ are missing at the moment; we expect them to be useful for computations by Remark \ref{remark: equivariant formality of KH is useful for computations}. We emphasize that $KH^G(X)$ is often explicitly computable -- we do not seriously discuss this here, but see \cite[Remark 6.3, Examples 6.1 and 6.2]{Low24} for some formulas.

    The above formality has some immediate elementary consequences. 
    \newline Firstly, for simple varieties from $\eB^G_{\C}$, we deduce an isomorphism
    \begin{equation*}
        KH^G_0(X) \cong K^G_{\top, 0}(X)
    \end{equation*}
    with topological $K$-theory of the analytification in Proposition \ref{theorem: isomorphisms with topological theories in degree zero}. A posteriori, in these cases $KH$ glues from affine pavings (since topological $K$-theory does). The whole $KH^G_{\bullet}(X)$ is determined by the independent geometric contribution of $K^G_{\top, 0}(X)$ and arithmetic contribution of $K_{\bullet}(\C)$.
    \newline Secondly, simple varieties from $\eB^G_{\F_q}$ satisfy Parshin's property by Example \ref{example: parshins property for simple varieties}.
\end{itemize}

\subsubsection{Strategy of proofs.}
In all three applications, our proof works as follows. We first express the desired statement in terms of some truncating invariant: in the first two cases, the fiber of the comparison map is truncating; in the third case $KH$ is truncating to start with. We then check the statement on the point and consequently on the equivariant point $BG$; this can be usually done ``by hand".

Once the above is settled, we are in a good shape. Indeed, we can control truncating invariants throughout the constructive steps defining simple varieties, so the results reduce to the case of $BG$. The subtle difference between the classes $\eB^G_k$ and $\eC^G_k$ morally depends on whether we wish to control the outputted homotopy groups one by one (as in the latter two applications), or if we are able to get a statement on the level of spectra (as in the first application).

To deduce our results for specific varieties (such as finite or affine Schubert varieties for $\GL_n$), we just need to argue that they are simple. These geometric arguments are completely independent from the homotopical machinery of localizing invariants. 

We would like to point out that the presentation of the proof we are offering goes backwards -- the ideas originate from abstracting the case of affine Schubert varieties.

\subsubsection{Outlook.}\label{section: outlook}
The above results should not be viewed as a definitive list of applications; it should rather give a taste of what is possible. 

\bigskip

In one direction, one could try to come up with other naturally occurring varieties which happen to be simple. Such endeavors can stay entirely withing the realm of classical algebraic geometry via Definition \ref{definition: closure properties on schemes}. 

However, one can further optimize the definition. To start with, one can directly add varieties $X$ such that $\Perf(X)$ has a \textit{full exceptional collection} to our class $\eB_k$ -- this immediately imports further examples where our results hold. For example, this applies to partial flag varieties of arbitrary type \cite{SK24}. Also split smooth projective toric varieties qualify \cite{Kaw05} (and one can generate singular ones via cdh descend in our bigger class $\eC_k$). One may also cleanly formulate our constructive arguments on rather general derived stacks.

We do think that there are further examples coming from geometric representation theory where our results hold. Specifically, it is reasonable to expect that our methods may be adapted to cover finite and affine Schubert varieties of arbitrary types. 

\bigskip

In another direction, one could try to carry through more refined computations and comparisons of localizing invariants along the same recipe. For example, following the constructive definition of simple varieties, $KH$ (as well as other truncation invariants) are actually computable. 

Furthermore, all the comparison maps are compatible with motivic filtrations and our results should refine to this level -- for instance, our results on $KH$ should refine to the level of cdh-local motivic cohomology $\Z(j)^{\cdh}$ of \cite{BEM25}. However, some technical care would be needed, as motivic cohomology complexes do not apriori exist as localizing invariants. 

\subsubsection{Other work.}\label{section: other work}
To close the introduction, we would like to give an idea of the complexity of $K$-theory of the singular varieties we are dealing with.

\begin{remark}
Consider the (affine) conic singularity $\{ z^2 - xy = 0\} \subseteq \A^3_k$ over a field $k$ of characteristic zero. The rather nontrivial computation of its (non-equivariant) $K$-theory is carried out in \cite[Theorem 4.3]{CHWW13}; also see \cite[\S 2.3, \S 2.4]{PS21} for complementary results on finite quotient singularities.

The projective conic coincidentally appears as the smallest singular affine Schubert variety for $\GL_2$ -- it lies in the class $\eB_k$ where all our results apply. See Example \ref{example: the projective cone of projective line}.
\end{remark}

\begin{remark}\label{remark: relation to motivic satake}
There is a notable amount of recent activity on equivariant $K$-theoretic and motivic study of finite and affine flag varieties, with serious applications in geometric representation theory -- \cite{SW18, SVW18, EK19, Ebe22, ES22, ES23, Ebe24, Ebe24b, EE24, EE25}, to name a few. In particular, the formalism of \textit{reduced $K$-motives}, developed and successfully employed in \cite{ES23, Ebe24b, EE24, EE25}, provides a categorified variant of equivariant algebraic $K$-theory. However, in order to control equivariant formality, this variant enforces (by hand) the higher $K$-groups of the base to vanish.

As far as we know, equivariant formality statements such as our Theorem \ref{corollary: KH of simple varieties} were not available before. This result suggests that some equivariant formality holds directly, without modifying the definitions (and including higher $K$-theory). We hope that our methods could help to clarify such issues. 
\end{remark}

\begin{remark}\label{remark: cellular arguments}
Let us also emphasize the difference between our approach and standard gluing techniques for smooth cellular varieties. 

Let $G$ be a linearly reductive group over $k$; assume that $X \in \Sch^G_k$ is smooth and posseses an equivariant paving by affine spaces $\A^n$. By smoothness, $K$-theory, $KH$ and $G$-theory agree; their common value on equivariant $\A^n$ then matches its value on the equivariant point, which is explicit by \S \ref{section: representations and decomposability}. Furthermore, these invariants have the closed-open localization sequence, inherited from $G$-theory. A skeletal induction then allows to control $K^G(X)$ at least as a module -- indeed, one may inductively argue that the long exact sequence associated to a gluing of an open cell splits, ultimately deducing equivariant formality results analogous to (the additive part) of Theorem \ref{corollary: KH of simple varieties}.

While the main examples of our interest also have affine pavings, their singularities render the above techniques useless. To start with, even the additive computation outlined above does not seem obvious for equivariant $KH$ in the singular case, not to even mention the ring structures. 

Even more severe discrepancy between the methods occurs for our results about traces \S \ref{section: the p-adic cyclotomic trace}, \S \ref{section: rational goodwillie--jones trace in degree zero}: the statements in question do not hold for the affine spaces $\A^n$. Moreover, the Hoschschild-type invariants appearing as the target do not enjoy the relevant localization sequences. For these two reasons, there seems to be no hope for deducing our results by gluing from the affine strata. 

Finally, the class $\eC^G_k$ also covers some varieties without affine paving, such as Example \ref{example: projective nodal curve}.
\end{remark}

\subsection{Plan of the paper}
This paper is structured as follows, separating the arguments into three logically independent parts.

\noindent In \S \ref{section: localizing and truncating invariants of simple varieties} we define the classes of simple varieties $\eB^G_k \subseteq \eC^G_k$ by a constructive recipe, recall the necessary background on localizing invariants, and prove the main technical results -- Theorems \ref{theorem: vanishing of truncating invariants}, \ref{theorem: a degree zero version}, \ref{theorem: computation of truncating invariants} -- which allow to control equivariant truncating invariants on simple varieties.
\newline In \S \ref{section: applications to concrete localizing invariants} we discuss, one by one, the promised applications to various localizing invariants and equivalences between them -- see in particular Theorems \ref{corollary: K and TC}, \ref{theorem: rational goodwillie--jones trace in degree zero}, \ref{corollary: KH of simple varieties}. These three applications can be read independently.
\newline In \S \ref{section: examples of simple varieties}, we supply explicit examples of simple varieties, starting from elementary ones and ending with finite and affine Schubert varieties for $GL_n$ in Lemmata \ref{lemma: finite schubert varieties are simple}, \ref{example: affine schubert varieties are simple}. These final examples are truly interesting in geometric representation theory; all the results from the second section apply to them.

The paper contains two appendices. Appendix \ref{appendix: cdh sheaffified negative cyclic homology} reviews the supplementary localizing invariant $\LHCn(-/ k)$, relevant in \S \ref{section: rational goodwillie--jones trace in degree zero}. Appendix \ref{appendix: actions of linearly reductive G} outlines the variant of our results for actions of a linearly reductive group $G$.

\subsection{Acknowledgements}
First and foremost, I would like to thank Matthew Morrow for discussions, explanations and ideas without which this work would not have been carried out. I would further like to thank Brian Conrad for providing an amazing reference on projective cones in proper generality, to Vova Sosnilo for carefully discussing -- among other things -- the derived nilinvariance for quotients by any linearly reductive groups, and to Adeel Khan, Timo Richarz, Matthias Wendt and Xinwen Zhu for helpful conversations about the results. I would moreover like to thank the referee for the very useful comments.

This work took place during my visit at Laboratoire de Mathématiques d’Orsay, funded by Erasmus+ staff mobility training, and at the Institute of Science and Technology Austria (ISTA) during my PhD. I was supported by the DOC Fellowship of the Austrian Academy of Sciences.

\section{Localizing and truncating invariants of simple varieties}\label{section: localizing and truncating invariants of simple varieties}
We start by giving a constructive definition of the classes $\eB^G_k \subseteq \eC^G_k$ of simple varieties in \S \ref{section: a class of simple varieties}, which are the main object of study in this paper. We continue by recalling the notion of localizing and truncating invariants in \S \ref{section: localizing and truncating invariants} as well as some group-theoretic preliminaries in \S \ref{section: an interlude on groups and representations}. We then prove the technical result, Theorem \ref{theorem: vanishing of truncating invariants} in \S \ref{section: truncating invariants of simple varieties}, allowing us to control equivariant truncating invariants on simple varieties from $\eC^G_k$. We close the section by useful variants valid on the smaller class $\eB^G_k$, namely Theorems \ref{theorem: a degree zero version} and \ref{theorem: computation of truncating invariants} in \S \ref{section: degreewise versions}.

\subsection{A class of simple varieties}\label{section: a class of simple varieties}

\begin{notation}\label{notation: rings and groups}
Let $R$ be an arbitrary base ring, we write $S= \Spec R$ and denote $\Sch_R$ the category of quasi-compact separated schemes over $R$. 
Let $G$ be an affine group scheme over $R$. We then denote $\Sch^G_R$ the category of $G$-equivariant quasi-compact separated schemes over $k$ and equivariant maps between them. 
Any object $X$ of $\Sch^G_R$ gives rise to the global quotient stack $X/G$. Then $\Vect(X/G)$ and $\Perf(X/G)$ denote the categories of $G$-equivariant vector bundles and $G$-equivariant perfect complexes on $X$. We denote $BG = S/G$ the classifying stack of $G$ over $S$.

Using the above notation, we will mostly work over a base field $k$. We then write $\pt = \Spec k$.
\end{notation}

\begin{notation}\label{notation: flag bundles}
Given $X \in \Sch_k$, $\eF \in \Qcoh(X)$ and a dimension vector $d_{\bullet} = (d_1, \dots, d_m)$ of lenght $m$ and total dimension $d := d_1+ \dots + d_m$, we write $\Flag_X(\eF, d_{\bullet})$ for the relative flag bundle of type $d_{\bullet}$ parametrizing flags of vector bundle quotients of $\eF$ with successive subquotients of ranks $d_1, \dots, d_m$. When $m=1$, we get $\Grass_X(\eF, d)$.
\end{notation}

Let $G$ be any affine group scheme over $k$. We give the constructive definition of simple $G$-equivariant varieties straightaway. We emphasize that this definition stays within the realm of classical algebraic geometry.
\begin{definition}[Simple varieties $\eB^G_k$]\label{definition: closure properties on schemes}
Let $\eB^G_k \subseteq \Sch^G_k$ be the smallest full subcategory satisfying the following closure properties:
\begin{enumerate}
    \item \label{closure property: the point} (the point) The point $\pt$ with the trivial $G$ action lies in $\eB^G_k$. Moreover, $\eB^G_k$ is closed under finite disjoint unions. 
    \item \label{closure property: projective bundles} (projective bundles) Let $X \in \Sch^G_k$ and $\eE \in \Vect(X/G)$ be an equivariant vector bundle.
    \begin{enumerate}
        \item \label{closure property: projective bundles part a} If $\eE$ has rank $\geq 1$ everywhere\footnote{The rank conditions here and below are equivalent to the associated projectivization having nonempty fibers.}, then 
        $$X \in \eB^G_k \iff \P_X(\eE) \in \eB^G_k.$$
        \item \label{closure property: projective bundles part b} More generally, let $d_{\bullet}$ be any dimension vector of total dimension $d$ and assume that $\eE$ has rank $\geq d$ everywhere. Then
        $$X \in \eB^G_k \iff \Flag_X(\eE, d_{\bullet}) \in \eB^G_k$$
    \end{enumerate}
    \item \label{closure property: stratified projective bundles} (stratified projective bundles with nonempty fibers) Let $X \in \Sch^G_k$ and $\eF \in \Qcoh(X/G)$ be an equivariant quasi-coherent sheaf on $X$ that can be written as a cokernel of a map of equivariant vector bundles. 
    \begin{enumerate}
        \item \label{closure property: stratified projective bundles part a} Assume that $\eF$ has rank $\geq 1$ everywhere. Then
            \begin{equation*}
             \P_X(\eF) \in \eB^G_k \implies X \in \eB^G_k.    
             \end{equation*}
        \item \label{closure property: stratified projective bundles part b} More generally, let $d_{\bullet}$ be any dimension vector of total dimension $d$ and assume that $\eF$ has rank $\geq d$ everywhere. Then  
    \begin{equation*}
        \Flag_X(\eF, d_{\bullet}) \in \eB^G_k \implies X \in \eB^G_k.    
    \end{equation*}
    \end{enumerate}
    \item \label{closure property: 3-out-of-4 for split abstract blowups} (3-out-of-4 for split abstract blowups) Take a noetherian abstract blowup square $(X, Y, Z, E)$ in $\Sch^G_k$, meaning an equivariant pullback square of noetherian schemes over $k$
    \begin{equation*}\label{equation: equivariant abstract blowup square}
    \begin{tikzcd}
        Y \arrow[d] \arrow[r, hookleftarrow] & E \arrow[d] \\
        X \arrow[r, hookleftarrow] & Z,
    \end{tikzcd}
    \end{equation*}  
    where $X \hookleftarrow Z$ is a closed immersion while $Y \to X$ is proper and isomorphism over $X \setminus Z$. 
    Assume that the square is {\it split} in the sense that either $E \to Y$ admits a retraction or $Y \to X$ admits a section.
    If three of its terms lie in $\eB^G_k$, then the fourth term lies in $\eB^G_k$ as well.
    \end{enumerate}    
\end{definition} 

\begin{definition}[Simple varieties $\eC^G_k$]\label{definition: simple varieties C} 
We define a bigger class $\eC^G_k \supseteq \eB^G_k$ by the closure properties \eqref{closure property: the point}, \eqref{closure property: projective bundles}, \eqref{closure property: stratified projective bundles} and:
    \begin{enumerate}
    \item[(4')]\label{closure property: 3-out-of-4 for abstract blowups} (3-out-of-4 for abstract blowups) Take a noetherian abstract blowup square $(X, Y, Z, E)$ in $\Sch^G_k$.
    If three of its terms lie in $\eC^G_k$, then the fourth term lies in $\eC^G_k$ as well.
\end{enumerate} 
\end{definition} 

\begin{notation}
In particular, for trivial $G$ we get the subcategories $\eB_k \subseteq \eC_k$ of $\Sch_k$. Already this case is interesting.
\end{notation}

\begin{remark}\label{remark: classes of simple varieties for different groups}
If $G' \to G$ is a group homomorphism and $X \in \eB^G_k$, we get $X \in \eB^{G'}_k$ by restricting the action. In particular, $X \in \eB_k$. Similarly for the bigger class $\eC^G_k$. 
\end{remark}

In fact, the above definitions may be shortened by the following purely geometric argument.
\begin{lemma}\label{lemma: ommiting flag schemes}
The closure property \eqref{closure property: projective bundles part a} implies \eqref{closure property: projective bundles part b}. The closure properties \eqref{closure property: projective bundles part a} and \eqref{closure property: stratified projective bundles part a} jointly imply \eqref{closure property: stratified projective bundles part b}. 
Consequently, \eqref{closure property: projective bundles part b} and \eqref{closure property: stratified projective bundles part b} are automatic and can be omitted from the definition.
\end{lemma}
\begin{proof}
To show that \eqref{closure property: projective bundles part a} implies \eqref{closure property: projective bundles part b}, consider the corresponding full flag variety $\Flag_{X}(\eE, d'_{\bullet})$ where $d'_{\bullet} = (1, \dots, 1)$ of total dimension $d$. Then we have natural maps
\begin{equation*}
    \begin{tikzcd}
        \Flag_{X}(\eE, d'_{\bullet}) \arrow[bend left=20, rr, "h"] \arrow[swap]{r}{g} & \Flag_{X}(\eE, d_{\bullet}) \arrow[swap]{r}{f} & X
    \end{tikzcd}
\end{equation*}
and both $g$ and $h$ factor as towers of equivariant projective bundles, reducing to that case.


To see that \eqref{closure property: projective bundles part a} and \eqref{closure property: stratified projective bundles part a} jointly imply \eqref{closure property: stratified projective bundles part b}, consider the diagram
\begin{equation*}
    \begin{tikzcd}
        \Flag_X(\eF, d'_{\bullet}) \arrow[bend left=20, rr, "h"] \arrow[swap]{r}{g} & \Flag_X(\eF, d_{\bullet}) \arrow[swap]{r}{f} & X
    \end{tikzcd}
\end{equation*}
where $\Flag_X(\eF, d'_{\bullet})$ is the stratified full flag variety bundle for $d' = (1, \dots, 1)$ of total dimension $d$.
Now $g$ is a tower of honest projective bundles, so we know $\Flag_X(\eF, d_{\bullet}) \in \eB^G_k$ if and only if $\Flag_X(\eF, d'_{\bullet}) \in \eB^G_k$ by iterative use of \eqref{closure property: projective bundles part a} alone.
Moreover, $h$ is a tower of stratified projective bundles, so we already know that $\Flag_X(\eF, d'_{\bullet}) \in \eB^G_k \implies X \in \eB^G_k$ by iterative use of \eqref{closure property: stratified projective bundles part a} alone. Hence we conclude.
\end{proof}

\begin{remark}\label{remark: independence on nilpotent structures}
Given a noetherian scheme $X \in \Sch_k$ with underlying reduced subscheme $X_{\red} \hookrightarrow X$, we have $X \in \eB_k \iff X_{\red} \in \eB_k$. Similarly for the class $\eC_k$. 
Indeed, this is a direct consequence of \eqref{closure property: 3-out-of-4 for split abstract blowups} -- the square $(X, Y, Z, E) = (X, \emptyset, X_{\red}, \emptyset)$ is an abstract blowup; it is split with respect to the map $\emptyset \to \emptyset$. 

(In the more general setup of derived stacks, the independence on nilpotent and derived structures may be conveniently built directly into the definition; see Definition \ref{definition: the absolute class Cp} for such a formulation.)
\end{remark}

\begin{remark}
Since Definitions \ref{definition: closure properties on schemes}, \ref{definition: simple varieties C} are sufficient for our applications, we in general stick to them to minimize the technical load. 

Nevertheless, they can be further optimized.
Depending on the concrete application, we can even add more base cases to \eqref{closure property: the point}. For example, see Definition \ref{definition: the absolute class Cp} in \S \ref{section: strictly henselian base rings} for an absolute class $\eC_p$ depending only on a prime $p$ relevant for the cyclotomic trace, or Remark \ref{remark: KH arbitrary base} for a discussion of $KH$ of simple varieties over an arbitrary base ring $R$.
\end{remark}

\subsection{Localizing and truncating invariants}\label{section: localizing and truncating invariants}
We now recall the invariants we wish to control.
Let $E$ be any $k$-linear {\it localizing invariant} valued in spectra in the sense of \cite[Definition 1.2 and Remark 1.18]{LT19}. By definition, $E(-)$ is a functor of $\infty$-categories
\begin{equation*}
   E(-) : \Cat^{\perf}_k \to \Sp
\end{equation*}
which sends exact sequences to fiber sequences. In particular, if $\eD \in \Cat^{\perf}_k$ has a semiorthogonal decomposition $\eD = \langle \eD_j \rangle_{ j=0}^m$, we get a natural decomposition $E(\eD) \simeq \bigoplus_{j=0}^m E(\eD_j)$.

A localizing invariant is called {\it finitary} if it further commutes with filtered colimits. Many important localizing invariants have this property (and sometimes it is included into the definition). Also see \cite{Tam17, LT19, HSS17, BGT13}.

A localizing invariant $E$ is called {\it truncating} if it satisfies \cite[Definition 3.1]{LT19}: for every connective $\mathbb{E}_1$-ring spectrum $A$, the canonical map $E(A) \to E(\pi_0(A))$ is an equivalence. This further forces nilinvariance and cdh descent in various degrees of generality \cite{LT19, KR21, ES21}.

By convention, we evaluate $E$ on any derived stack $X$ via its category of perfect complexes
\begin{equation*}
    E(X) := E(\Perf(X)).
\end{equation*}
In particular, given any $G$-equivariant scheme $X \in \Sch_k^G$, we denote
\begin{equation*}
    E^G(X) := E(X/G) := E(\Perf(X/G))
\end{equation*}
and freely switch between these notations.

For any derived stack $X$, the category $\Perf(X)$ carries a symmetric monoidal structure $\otimes$. All the localizing invariants we consider are lax monoidal: they in particular turn this into an $\mathbb{E}_{\infty}$-ring structure on the outputted spectrum $E(X)$, whose homotopy groups thus form a graded-commutative ring $E_{\bullet}(X)$. For brevity, we call such localizing invariants {\it multiplicative}.

\begin{recollection}
The following are $k$-linear localizing invariants:
\begin{itemize}
    \item non-connective algebraic $K$-theory $K(-)$, it is finitary,
    \item topological Hochschild homology $THH(-)$, it is finitary,
    \item Hochschild homology of $k$-linear categories $HH(-/ k)$, it is finitary,
    \item negative cyclic homology of $k$-linear categories $HC^{-}(-/ k)$,
    \item topological cyclic homology $TC(-)$.
\end{itemize}
Furthermore, the following are $k$-linear truncating invariants:
\begin{itemize}
    \item homotopy $K$-theory $KH(-)$ of $k$-linear categories, it is finitary,
    \item the fiber of the cyclotomic trace $K^{\inf}(-)$,
    \item periodic cyclic homology of $k$-linear categories $HP(-/ k)$ if $\chara k = 0$,
    \item topological $K$-theory of $\C$-linear categories $K_{\top}(-)$, it is finitary.
\end{itemize}
\end{recollection}
\begin{proof}
For $K(-)$, $THH(-)$, see \cite{BGT13}, \cite[Example 17]{Tam17}. For $HH(-/ k)$, see \cite{Hoy18}, \cite{HSS17}; it commutes with filtered colimits by definition. Further see \cite[p. 914]{LT19} for a discussion of $HH(-/k)$, $HC^{-}(-/k)$, $HP(-/ k)$ as localizing invariants and \cite[proof of Corollary 3.6]{LT19} for $TC(-)$ and the truncating invariant $K^{\inf}(-)$. For $K_{\top}(-)$ as a finitary localizing invariant see \cite[Theorem 1.1]{Bla16}; it is truncating by \cite{Kon21}.

Also $KH(-)$ is a truncating invariant by \cite[Proposition 3.14]{LT19}; it commutes with filtered colimits as $K(-)$ does by swapping colimits in its definition.
\end{proof}

\begin{notation}
We use the following notation:
\begin{itemize}
    \item $K(-; \F_p) := K(-)/p$ for the mod-$p$ algebraic $K$-theory,
    \item $K(-; \Z/p^j):= K(-)/p^j$ and $K(-; \Z_p) := \lim_{j} K(-; \Z/p^j)$ for its mod-$p^j$ and $p$-adic versions,
    \item $K(-; \Q):= K(-)_{\Q}$ for rationalized $K$-theory.
\end{itemize}
See \cite[\S 9.3]{TT90} for details.
Given any localizing invariant $E$ in place of $K$-theory, we use the analogous notations
$E(-; \F_p)$, $E(-; \Z/p^j)$, $E(-; \Z_p)$ and $E(-; \Q)$.
\end{notation}

The above discussion also works over an arbitrary base ring $R$ in place of $k$; we will use this in a few side remarks.

\subsection{An interlude on groups and representations}\label{section: an interlude on groups and representations}
We now discuss the groups that we eventually want to consider. The results of this paper work best for nice groups over fields. They further generalize to linearly reductive groups over fields (giving broader applicability in characteristic zero). In a different direction, restricting to diagonalizable groups only, our results work over more general base rings.

\subsubsection{Nice and diagonalizable groups.}\label{section: nice and diagonalizable groups}

Let $M$ be an abstract abelian group. Then its group algebra $R[M]$ is naturally a commutative and cocommutative Hopf algebra, and we write $D(M) = \Spec(R[M])$ for the corresponding affine group scheme. An affine group scheme $G$ over $R$ is called \textit{diagonalizable} if it is of the form $D(M)$ for some abelian group $M$. 

The diagonalizable group $D(M)$ is of finite type if and only if $M$ is finitely generated. By the classification of finitely generated abelian groups, diagonalizable groups of finite type are given by finite products of the multiplicative group $\Gm$ and the groups of finite order roots of unity $\mu_{p^{a}}$ of a prime power $p^a$.

An affine group scheme $G$ over $R$ is of {\it multiplicative type} if it is diagonalizable fpqc locally on $R$. 

An affine group scheme $G$ of finite type over $R$ is called {\it nice}, if it is an extension of a closed and open normal subgroup $G^0$ of multiplicative type by an étale group scheme of order invertible in $R$. It is called {\it linearly reductive} if the functor $(-)^G$ of invariants is exact. Nice groups are always linearly reductive.

\subsubsection{Linearly reductive groups over fields.}
Let $G$ be a linearly reductive affine group scheme of finite type over a field $k$. 
In concrete terms \cite[Remark 2.3]{AHR19}, such $G$ is given as follows:
\begin{itemize}
    \item[(i)] if $\chara k = 0$, this is equivalent to saying that $G$ is an extension of a finite group by a connected reductive group,
    \item[(ii)] if $\chara k = p$ is positive, this is equivalent to saying that $G$ is nice, i.e. an extension of an étale group whose order is not divisible by $p$ by a group of multiplicative type.
\end{itemize}

In characteristic $p$, the notions of nice and linearly reductive groups coincide. In characteristic zero, reductive groups form a bigger class (e.g. $GL_n$ is linearly reductive but not nice).

\subsubsection{Representations and decomposability.}\label{section: representations and decomposability}
In order to compute equivariant localizing invariants of the point, we need to control the category $\Perf(BG)$. We denote by $\Rep_k(G)$ the category of finite dimensional algebraic representations of $G$; it is tautologically equivalent to the category $\Vect(BG)$ of vector bundles on its classifying stack. We write $R(G) = K_0(BG)$ for the representation ring of $G$.

The above categories become very easy in the following naturally occurring situation.
\begin{discussion}\label{discussion: decomposition of BG}\label{discussion: decomposition of BT}
We say that an affine group scheme $G$ over a ring $R$ has {\it decomposable representation theory} if there is an index set $I$ and an equivalence in $\Cat^{\perf}_R$   
\begin{equation}\label{equation: decomposition of BG}
\Perf(R/G) \simeq \bigoplus_{I} \Perf(R).   
\end{equation}
The following groups give the most notable examples.
\begin{itemize}
    \item[(i)] Let $G$ be a linearly reductive group over a field $k$. Then the category $\Rep_k(G)$ is semisimple; it is an infinite direct sum of copies of $\Vect(\pt)$ labeled by the irreducibles in $\Rep_k(G)$. Consequently, $G$ has decomposable representation theory with
    $\Perf(BG) \simeq \bigoplus_{\Irr(G)} \Perf(k)$.  
    \item[(ii)] Let $G = D(M)$ is a diagonalizable group over $ S = \Spec R$. Representations of $G$ correspond to $M$-graded quasi-coherent sheaves on $S$. The latter abelian category decomposes as a direct product of copies of quasi-coherent sheaves on $R$ over the index set $M$. Consequently, $G$ has decomposable representation theory with $\Perf(S/G) \simeq \bigoplus_{M} \Perf(S)$.
\end{itemize}
In these examples, the natural monoidal structure on the left-hand side corresponds to the convolution on the right-hand side.     
\end{discussion}

Decomposition \eqref{equation: decomposition of BG} has the following simple consequence: the value of finitary localizing invariants on the equivariant base is determined by its value on $R$.
\begin{lemma}\label{lemma: value on equivariant point}
Let $G$ be a group over $R$ with decomposable representation theory and $E$ a finitary localizing invariant. Then
\begin{equation*}
    E^G(R) \simeq \bigoplus_{I} E(R).
\end{equation*}    
\end{lemma}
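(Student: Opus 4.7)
The plan is to apply the localizing invariant $E$ to both sides of the equivalence
\begin{equation*}
    \Perf(R/G) \simeq \bigoplus_{I} \Perf(R)
\end{equation*}
from Discussion \ref{discussion: decomposition of BG}, and then use two basic properties of $E$: additivity over finite direct sums, and the finitary hypothesis to pass to an infinite sum.

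First I would recall (or verify) that any localizing invariant is additive on finite direct sums in $\Cat^{\perf}_R$. Indeed, for two perfect stable categories $\eC, \eD$, there is a split exact sequence $\eC \hookrightarrow \eC \oplus \eD \twoheadrightarrow \eD$, and applying the localizing invariant yields a split fiber sequence in $\Sp$, hence $E(\eC \oplus \eD) \simeq E(\eC) \oplus E(\eD)$. By induction this extends to any finite index set $J \subseteq I$, giving
\begin{equation*}
    E\Bigl(\bigoplus_{J} \Perf(R)\Bigr) \simeq \bigoplus_{J} E(R).
\end{equation*}

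Next I would write the full direct sum as a filtered colimit along finite subsets,
\begin{equation*}
    \bigoplus_{I} \Perf(R) \simeq \colim_{J \subseteq I \text{ finite}} \bigoplus_{J} \Perf(R),
\end{equation*}
which holds in $\Cat^{\perf}_R$ essentially by unwinding definitions (an object of the left-hand side has only finitely many nonzero components). Since $E$ is finitary, it commutes with this filtered colimit, and combined with the previous step we obtain
\begin{equation*}
    E^G(R) = E(\Perf(R/G)) \simeq \colim_{J} \bigoplus_{J} E(R) \simeq \bigoplus_{I} E(R).
\end{equation*}

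There is no real obstacle; the only point requiring minor care is the identification of the infinite direct sum in $\Cat^{\perf}_R$ as a filtered colimit of the finite ones, which is routine but should be mentioned explicitly so that the finitary hypothesis can be invoked.
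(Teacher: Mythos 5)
Your proposal is correct and is essentially the paper's own argument: the paper likewise notes that $E$ commutes with finite direct sums and filtered colimits, hence with the infinite direct sum in the decomposition $\Perf(R/G) \simeq \bigoplus_I \Perf(R)$. You merely spell out the two steps (split exact sequences for finite additivity, and the identification of $\bigoplus_I \Perf(R)$ as a filtered colimit of finite sub-sums) that the paper leaves implicit.
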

\begin{proof}
Since $E(-)$ commutes with finite direct sums and filtered colimits, it is compatible with infinite direct sums, so we conclude from \eqref{equation: decomposition of BG}.
\end{proof}

\begin{remark}\label{remark: finitary not so important}
In applications, one can often check the outcome of Lemma \ref{lemma: value on equivariant point} even in cases when $E$ is not finitary per se. More precisely, it is often sufficient to know that $E(\pt)$ is sufficiently bounded. See Lemmata \ref{lemma: cyclotomic trace on equivariant point} and \ref{lemma: vanishing on BT}. 
\end{remark}

\subsection{Truncating invariants of simple varieties}\label{section: truncating invariants of simple varieties}
Let $k$ be a field and $G$ a nice group over it. 
The following theorem then gives control over truncating invariants on $\eC^G_k$. It is one of the main technical observations of this paper. Also see \S \ref{section: degreewise versions} for further degreewise variations, and Appendix \ref{appendix: actions of linearly reductive G} for the case of linearly reductive $G$.

\begin{theorem}\label{theorem: vanishing of truncating invariants}
Let $E$ be a $k$-linear truncating invariant with $E^G(\pt) \simeq 0$. 
Then for all $X \in \eC^G_{k}$ we have
\begin{equation*}
    E^G(X) \simeq 0.
\end{equation*}
\end{theorem}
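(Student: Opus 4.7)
The proof proceeds by induction on the constructive definition of $X \in \eC^G_k$, verifying that the vanishing $E^G(-) \simeq 0$ is preserved by each of the four closure properties in Definitions \ref{definition: closure properties on schemes} and \ref{definition: simple varieties C}.

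The base case \eqref{closure property: the point} is given by hypothesis, and finite disjoint unions reduce to finite direct sums of spectra by exactness of $E$. For \eqref{closure property: projective bundles}, the (equivariant) flag bundle formula -- arising from a semi-orthogonal decomposition of $\Perf(\Flag_X(\eE, d_\bullet)/G)$ into copies of $\Perf(X/G)$ indexed by Young diagrams of shape $d_\bullet$ -- yields
\begin{equation*}
    E^G(\Flag_X(\eE, d_\bullet)) \;\simeq\; \bigoplus E^G(X),
\end{equation*}
so the vanishing transfers in both directions. For (4'), the hypothesis that $E$ is truncating provides (equivariant) cdh descent via \cite{LT19, ES21} (and conditionally \cite{LS25} in the linearly reductive case), turning any abstract blowup square into a cofiber sequence
\begin{equation*}
    E^G(X) \to E^G(Y) \oplus E^G(Z) \to E^G(E),
\end{equation*}
from which 3-out-of-4 is immediate.

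The hard part will be closure property \eqref{closure property: stratified projective bundles}, where $\eF$ is only a cokernel of vector bundles and not itself locally free. Given $E^G(\Flag_X(\eF, d_\bullet)) \simeq 0$, I would deduce $E^G(X) \simeq 0$ by noetherian induction on $\dim X$, stratifying $X$ by the Fitting ideals of $\eF$. On the dense open $U \subseteq X$ where $\eF|_U$ attains its minimal rank $r \geq d$ -- and is therefore locally free there -- the base-changed map $\Flag_X(\eF, d_\bullet)|_U = \Flag_U(\eF|_U, d_\bullet) \to U$ is an honest flag bundle, so \eqref{closure property: projective bundles} (already handled) controls $E^G(U)$. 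On the closed complement $Z = X \setminus U$, which has strictly smaller dimension, the sheaf $\eF|_Z$ is again a cokernel of vector bundles of rank $\geq d$, so the induction hypothesis applies to the pair $(Z, \eF|_Z)$ once we know $E^G(\Flag_Z(\eF|_Z, d_\bullet)) \simeq 0$. The two pieces are assembled via the open-closed localization sequence for $E$, which is available because truncating invariants enjoy the requisite excision. The key bookkeeping -- and the main technical obstacle -- is to run the induction simultaneously on $X$ and on $\Flag_X(\eF, d_\bullet)$, propagating the vanishing of $E$ along both towers through the stratification. Alternatively, one can avoid stratification altogether by working with the derived flag bundle $\mathbb{F}lag_X(\eF, d_\bullet)$ attached to the presentation of $\eF$: this derived scheme satisfies a projective bundle formula directly, and its classical truncation agrees with $\Flag_X(\eF, d_\bullet)$ up to derived structure to which the truncating invariant $E$ is insensitive.
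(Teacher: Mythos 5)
Your treatment of \eqref{closure property: the point}, \eqref{closure property: projective bundles} and (4') is fine (the paper handles flag bundles by factoring through the full-flag tower of projective bundles rather than quoting a flag SOD directly, but this difference is immaterial). The genuine gap is in your main argument for \eqref{closure property: stratified projective bundles}. First, the ``open-closed localization sequence'' you want does not exist for a truncating invariant evaluated on perfect complexes: for $Z \subseteq X$ closed with open complement $U$, localization gives $E^G(\Perf_Z(X/G)) \to E^G(X) \to E^G(U)$, and the support term is not $E^G(Z)$ -- identifying it with $E^G(Z)$ is a d\'evissage statement that fails for perfect complexes on singular schemes and for invariants other than $G$-theory. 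Nor is $\{U, Z\}$ a cdh cover (the closed immersion is not \'etale, the open immersion is not proper), so cdh descent gives you nothing here; a naive open-closed Mayer--Vietoris already fails for the truncating invariant $KH$ on $\{0\} \subset \A^1 \supset \Gm$ by the Bass fundamental theorem. Second, and independently fatal: your induction needs $E^G(\Flag_U(\eF|_U,d_\bullet)) \simeq 0$ and $E^G(\Flag_Z(\eF|_Z,d_\bullet)) \simeq 0$, but you only know vanishing on the total space $\Flag_X(\eF,d_\bullet)$, and vanishing of a truncating invariant does not pass to open subschemes or preimages of strata -- descent assembles strata into a total space, never the reverse, and the paper explicitly stresses that these vanishing statements are global and fail Zariski-locally. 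So the ``bookkeeping'' you defer is not bookkeeping; the required descent inputs are simply unavailable.

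The one-sentence alternative you mention at the end is, in fact, the paper's actual proof, and it is the step you should develop rather than the stratification. Choose an equivariant presentation $\eF = \coker(\eE_1 \to \eE_0)$ and form Jiang's derived projectivization (resp.\ derived Grassmannian/flag) $Y = \P_X(\eE_\bullet)$ of the Tor-amplitude $[1,0]$ complex $\eE_\bullet = [\eE_1 \to \eE_0]$, whose classical truncation is $\P_X(\eF)$. Jiang's semi-orthogonal decomposition \cite{Jia22a, Jia23}, together with the everywhere-rank-$\geq d$ hypothesis, exhibits $\Perf(X/G)$ as a semi-orthogonal summand of $\Perf(Y/G)$ via pullback, so $E^G(Y) \simeq 0$ forces $E^G(X) \simeq 0$; and since $E$ is truncating, \cite[Corollary 5.2.3]{ES21} (resp.\ \cite{LS25} for linearly reductive $G$) gives $E^G(Y) \simeq E^G(Y_{\cl}) = E^G(\P_X(\eF))$. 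The paper only needs the projectivization case, reducing the flag case to towers: the stratified full-flag bundle is an honest tower of projective bundles over $\Flag_X(\eF,d_\bullet)$ and a tower of stratified projective bundles over $X$. With that spelled out, your proof coincides with the paper's.
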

\begin{proof}
We check that $E^G(X) \simeq 0$ is stable under the closure properties from Definition \ref{definition: simple varieties C}.

\eqref{closure property: the point} By assumption, $E^G(\pt) \simeq 0$; the compatibility with finite disjoint unions is clear as $E$ commutes with finite direct sums. 

\eqref{closure property: projective bundles} The case \eqref{closure property: projective bundles part a} of $\P_X(\eE)$ follows from the projective bundle formula for localizing invariants. More precisely, since $X$ is quasi-compact by design, it has only finitely many connected components, so by \eqref{closure property: the point} we may assume $X$ is connected; we then denote $n := \rank \eE \geq 1$. Now, $\Perf(\P_{X/G}(\eE))$ has a semi-orthogonal decomposition into $n$ copies of $\Perf(X/G)$ by the classical results of \cite{Tho88}; see also \cite[Theorem 3.3 and Corollary 3.6]{Kha18}. Since localizing invariants send semi-orthogonal decompositions to direct sums, we have $E^G(\P_X(\eE)) \simeq \bigoplus_{i=0}^{n-1} E^G(X)$, giving the desired equivalence.
The more general case \eqref{closure property: projective bundles part b} of $\Flag_{X}(\eE, d_{\bullet})$ is then automatic by Lemma \ref{lemma: ommiting flag schemes}.

\eqref{closure property: stratified projective bundles} By the assumption on $\eF$, we can find a two term complex of equivariant vector bundles $\eE_{\bullet} = [\eE_1 \to \eE_0]$ with $\coker(\eE_1 \to \eE_0) \cong \eF$. To prove \eqref{closure property: stratified projective bundles part a}, take its derived projectivization $Y = \P_{X}(\eE_{\bullet})$ in the sense of \cite{Jia22a, Jia22b}. This is a derived scheme with classical truncation $Y_{\cl} = \P_{X}(\eF)$. Since $\eE_{\bullet}$ has Tor-amplitude $[1, 0]$, we obtain the semi-orthogonal decomposition of \cite[Theorem 3.2 and Remark 1.1]{Jia23}, \cite[Theorem 7.5]{Jia22a}. In particular, as the rank of $\eF$ is $\geq 1$ everywhere, the pullback realizes $\Perf(X/G)$ as a semiorthogonal summand in $\Perf(Y/G)$ -- note that this is independent of the characteristic of $k$ (in fact works over any base) by \cite[Remark 1.1]{Jia23}, \cite[Theorem 7.5]{Jia22a} since we are treating only derived projectivizations at the moment. Since localizing invariants send semi-orthogonal decompositions to direct sums, $E^G(Y) \simeq 0 \implies E^G(X) \simeq 0$. Moreover, as $E(-)$ is truncating, we have $E^G(Y_{\cl}) \simeq E^G(Y)$ by \cite[Corollary 5.2.3]{ES21}. Altogether, this proves the first statement.
The more general case \eqref{closure property: stratified projective bundles part b} of a stratified partial flag variety $\Flag_X(\eF, d_{\bullet})$ is again automatic by Lemma \ref{lemma: ommiting flag schemes}.

\eqref{closure property: 3-out-of-4 for abstract blowups} Any truncating invariant sends $G$-equivariant abstract blowup squares to homotopy fiber squares by \cite[Corollary 5.2.6]{ES21} -- the statement follows.
\end{proof}

\begin{remark}
The discussion of \eqref{closure property: the point} and \eqref{closure property: projective bundles} clearly works for any localizing invariant. However, for both \eqref{closure property: stratified projective bundles} and \eqref{closure property: 3-out-of-4 for abstract blowups}, it is crucial that $E$ is truncating. The closure properties \eqref{closure property: stratified projective bundles} and \eqref{closure property: 3-out-of-4 for abstract blowups} are important, as they allow us to go down and reach singular varieties.     
\end{remark}

\begin{corollary}\label{corollary: truncating invariants which commute with filtered colimits}
Let $E \to F$ be a map of $k$-linear finitary truncating invariants. If it induces an equivalence $E(\pt) \simeq F(\pt)$, then it induces an equivalence $E^G(X) \simeq F^G(X)$ for any $X \in \eC^G_k$.   
\end{corollary}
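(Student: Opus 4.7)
My plan is to reduce the comparison to the vanishing statement of Theorem \ref{theorem: vanishing of truncating invariants}. Form the fiber $D := \fib(E \to F)$ of the given natural transformation. Since fiber sequences of functors to spectra are computed pointwise, $D$ is again a $k$-linear localizing invariant; moreover it is truncating because the defining condition $D(A) \to D(\pi_0 A)$ being an equivalence follows from the analogous conditions for $E$ and $F$ and the fact that fibers preserve equivalences. Likewise $D$ is finitary because filtered colimits of spectra commute with finite limits. The whole statement thus reduces to showing that $D^G(X) \simeq 0$ for every $X \in \eC^G_k$.

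By Theorem \ref{theorem: vanishing of truncating invariants} applied to the truncating invariant $D$, it suffices to check $D^G(\pt) \simeq 0$. This is where I would use the assumption that $G$ is nice (or, conditionally, linearly reductive): by Discussion \ref{discussion: decomposition of BG}, such a $G$ has decomposable representation theory, so by Lemma \ref{lemma: value on equivariant point} applied to the finitary invariant $D$ we obtain
\begin{equation*}
    D^G(\pt) \simeq \bigoplus_{\Irr(G)} D(\pt).
\end{equation*}
The hypothesis $E(\pt) \simeq F(\pt)$ gives $D(\pt) \simeq 0$, and hence $D^G(\pt) \simeq 0$ as desired.

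There is no real obstacle: the argument is a purely formal packaging of Theorem \ref{theorem: vanishing of truncating invariants} together with the decomposition of $\Perf(BG)$. The one point that requires minor care is ensuring that the fiber $D$ inherits both the truncating property and finitariness; both follow immediately from the pointwise nature of fiber sequences and the commutation of filtered colimits with finite limits in spectra. Note that finitariness of $D$ is used only to reduce $D^G(\pt)$ to a direct sum of copies of $D(\pt)$ over the (possibly infinite) indexing set of irreducibles; see also Remark \ref{remark: finitary not so important} for the fact that weaker boundedness hypotheses would suffice in practice.
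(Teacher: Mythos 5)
Your proof is correct and follows exactly the paper's argument: pass to the fiber $\fib(E \to F)$, note it is a finitary truncating invariant vanishing on $\pt$, reduce $D^G(\pt)$ to a direct sum of copies of $D(\pt)$ via Lemma \ref{lemma: value on equivariant point}, and conclude by Theorem \ref{theorem: vanishing of truncating invariants}. The extra verifications you include (that the fiber inherits the truncating and finitary properties) are routine and consistent with the paper's intent.
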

\begin{proof}
The fiber $\fib(E \to F)$ is a $k$-linear finitary truncating invariant which vanishes on $\pt$. We conclude by Lemma \ref{lemma: value on equivariant point} and Theorem \ref{theorem: vanishing of truncating invariants}.
\end{proof}

The assumption that $E$, $F$ are finitary is used solely to commute $E$ through the decomposition \eqref{equation: decomposition of BG} via Lemma \ref{lemma: value on equivariant point}. As we already mentioned in Remark \ref{remark: finitary not so important}, this can often be done by hand in more general situations.

\subsection{Degreewise versions}\label{section: degreewise versions}
If we restrict attention only to the smaller class $\eB^G_k$, Theorem \ref{theorem: vanishing of truncating invariants} works degreewise. This is a very useful variant -- there are interesting maps of localizing invariants which induce isomorphisms only in certain degrees. 

As before, assume $G$ is a nice group over a field $k$. Also see Appendix \ref{appendix: actions of linearly reductive G} for the case of linearly reductive $G$. 
\begin{theorem}\label{theorem: a degree zero version}
Fix $i \in \Z$ and let $E$ be a $k$-linear truncating invariant with $E^G_i(\pt) \simeq 0$. Then for all $X \in \eB^G_{k}$ we have
\begin{equation*}
    E^G_i(X) \simeq 0.
\end{equation*}
More generally, fix $i \in \Z$ and let $E$ and $F$ be $k$-linear truncating invariants together with a map $E \to F$ which induces an isomorphism $E^G_i(\pt) \simeq F^G_i(\pt)$. Then for all $X \in \eB^G_{k}$ we have
\begin{equation*}
    E^G_i(X) \simeq F^G_i(X).
\end{equation*}
\end{theorem}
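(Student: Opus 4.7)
The plan is to run the induction from the proof of Theorem \ref{theorem: vanishing of truncating invariants}, but tracking only the single homotopy group $E^G_i(-)$ throughout. The essential observation is that the split hypothesis in closure property \eqref{closure property: 3-out-of-4 for split abstract blowups} of Definition \ref{definition: closure properties on schemes} is exactly what is needed to make the degreewise argument work, which is why we must restrict to $\eB^G_k$ instead of $\eC^G_k$.

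For closure property (1) the statement is immediate, since $E^G_i(\pt) = 0$ by hypothesis and $\pi_i$ takes finite sums of spectra to direct sums of abelian groups. For closure property (2), the projective bundle formula $E^G(\P_X(\eE)) \simeq \bigoplus_{j=0}^{n-1} E^G(X)$ passes to $\pi_i$ and gives a degreewise iff, with partial flag varieties reduced to iterated honest projective bundles as in the proof of Theorem \ref{theorem: vanishing of truncating invariants}. For closure property (3), the Jiang semi-orthogonal decomposition of the derived projectivization $Y = \P_X(\eE_{\bullet})$ realizes $E^G_i(X)$ as a direct summand of $E^G_i(Y)$, and the truncating property identifies $E^G_i(Y) \cong E^G_i(Y_{\cl}) = E^G_i(\P_X(\eF))$ via \cite[Corollary 5.2.3]{ES21}; the flag variant follows by the same factorization argument.

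The crucial step is closure property (4). For any $k$-linear truncating invariant, a $G$-equivariant abstract blowup square yields a homotopy fiber square by \cite[Corollary 5.2.6]{ES21}, whose Mayer--Vietoris long exact sequence on homotopy groups mixes consecutive degrees via the boundary $E^G_{i+1}(E) \to E^G_i(X)$. This mixing is precisely the obstacle to a degreewise inductive step, which is why the unrestricted 3-out-of-4 property used to define $\eC^G_k$ is unavailable here. However, when the square is split---say $Y \to X$ admits a section $s$---the induced $s^* : E^G(Y) \to E^G(X)$ is a retraction of $E^G(X) \to E^G(Y)$, so the latter is a split injection on every $\pi_i$, forcing the boundary map to vanish. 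The Mayer--Vietoris long exact sequence therefore degenerates into a split short exact sequence
\[
0 \to E^G_i(X) \to E^G_i(Y) \oplus E^G_i(Z) \to E^G_i(E) \to 0
\]
for every $i \in \Z$; the case when $E \hookrightarrow Y$ admits a retraction is symmetric. From this exact sequence, vanishing in three of the four corners forces vanishing in the fourth.

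For the relative statement, I would argue directly rather than passing to the fiber $\fib(E \to F)$: vanishing of $\fib^G_i(\pt)$ in a single degree would require iso on $\pi_i$ \emph{and} surjectivity on $\pi_{i+1}$ of the map on the point, whereas the theorem only asks for an iso in degree $i$. Instead, each of the decompositions and exact sequences from steps (1)--(3) is natural in the invariant, so a straightforward five-lemma applied to the pair of split short exact sequences obtained at the blowup step upgrades a 3-out-of-4 isomorphism on $\pi_i$ to the fourth corner. The main obstacle is entirely concentrated in step (4)---the split hypothesis in \eqref{closure property: 3-out-of-4 for split abstract blowups} is doing all the work, and dropping it would make the degreewise bookkeeping break down via the Mayer--Vietoris boundary map.
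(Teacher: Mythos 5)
Your proposal is correct and follows essentially the same route as the paper: one inducts over the closure properties of $\eB^G_k$ degreewise, using naturality of the semiorthogonal decompositions, the insensitivity of truncating invariants to derived structure via \cite[Corollary 5.2.3]{ES21}, and the fact that split abstract blowup squares yield split short exact sequences on each $\pi_i$ by \cite[Corollary 5.2.6]{ES21}, and the paper likewise treats the relative statement directly rather than through $\fib(E \to F)$. The only cosmetic point is that when the unknown corner is $Y$ or $Z$ the five lemma must be supplemented by the trivial remark that if the diagonal map $\phi_Y \oplus \phi_Z$ is an isomorphism and $\phi_Z$ is one, then so is $\phi_Y$.
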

\begin{proof}
We focus on the latter statement, the first being a special case for $F$ trivial.
It is enough to check that $E^G_i(X) \simeq F^G_i(X)$ is stable under the closure properties from Definition \ref{definition: closure properties on schemes}. This follows by the arguments from the proof of Theorem \ref{theorem: vanishing of truncating invariants}.
Indeed, the desired isomorphism is stable under taking direct sums and direct summands in $\Cat^{\perf}_k$, so it is stable under \eqref{closure property: the point} and \eqref{closure property: projective bundles}. Since $E_i(-)$ and $F_i(-)$ disregard derived structures on stacks in the necessary generality \cite[Corollary 5.2.3]{ES21}, the compatibility with direct summands shows the stability under \eqref{closure property: stratified projective bundles}. Since $E_i(-)$ and $F_i(-)$ turn split abstract blowup squares of stacks into split homotopy fiber squares in the necessary generality \cite[Corollary 5.2.6]{ES21}, we deduce the stability on \eqref{closure property: 3-out-of-4 for split abstract blowups}.
\end{proof}

In a similar vein, the value of any multiplicative truncating invariant $E$ on some $X \in \eB^G_k$ is determined by the degree zero part $E^G_0(X)$, which is well-controlled.

\begin{theorem}\label{theorem: computation of truncating invariants}
Let $E$ be a $k$-linear finitary truncating invariant, which is multiplicative. Then for all $X \in \eB^G_k$ the natural map induces a graded ring isomorphism
\begin{equation}\label{equation: computation of truncating invariants}
E^G_0(X) \underset{E_0(\pt)}{\otimes} E_{\bullet}(\pt) \xrightarrow{\cong}  E^G_{\bullet}(X).
\end{equation}
Furthermore, the commutative ring $E^G_0(X)$ is a finite projective module over $E^G_0(\pt)$.
\end{theorem}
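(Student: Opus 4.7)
The plan is to simultaneously propagate both claims---the ring isomorphism \eqref{equation: computation of truncating invariants} and the finite projectivity of $E^G_0(X)$ over $E^G_0(\pt)$---through the constructive definition of $\eB^G_k$, in the same inductive style as the proofs of Theorems~\ref{theorem: vanishing of truncating invariants} and~\ref{theorem: a degree zero version}. The key algebraic inputs are that finite projective modules are closed under finite direct sums and direct summands, that the comparison map in \eqref{equation: computation of truncating invariants} is natural in $X \in \Sch^G_k$ by the lax monoidal structure on $E$, and that tensoring with $E_\bullet(\pt)$ over $E_0(\pt)$ preserves split exact sequences.

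For the base case of the trivial point, finitariness of $E$ combined with the decomposition $\Perf(BG) \simeq \bigoplus_{\Irr(G)} \Perf(k)$ from Discussion~\ref{discussion: decomposition of BG} (valid since $G$ is nice, or linearly reductive conditionally on~\cite{LS25}) identifies $E^G_\bullet(\pt) \simeq \bigoplus_{\Irr(G)} E_\bullet(\pt)$ via Lemma~\ref{lemma: value on equivariant point}; the isomorphism \eqref{equation: computation of truncating invariants} is then tautological, and $E^G_0(\pt)$ is free of rank one over itself, hence finite projective. For the projective and flag bundle closure, the projective bundle formula of~\cite{Tho88, Kha18} gives $E^G(\P_X(\eE)) \simeq \bigoplus_{i=0}^{n-1} E^G(X)$, so propagation in both directions follows because direct sums commute with tensor products and finite projectivity is stable under finite direct sums and direct summands; the flag case reduces to this by iterating through a tower.

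The main obstacles lie in the stratified projective bundle and split abstract blowup closures, since both rely on the truncating hypothesis together with delicate compatibilities between multiplicative and module structures. For stratified projective bundles, I would reuse the derived projectivization setup from the proof of Theorem~\ref{theorem: vanishing of truncating invariants}: presenting $\eF$ as a cokernel of a map of equivariant vector bundles, form the derived projectivization $Y = \P_X(\eE_\bullet)$, invoke the truncating property via~\cite[Corollary 5.2.3]{ES21} (or~\cite{LS25}) to get $E^G(Y) \simeq E^G(\Flag_X(\eF, d_\bullet))$, and use the derived semi-orthogonal decomposition of~\cite{Jia23, Jia22a} to realize $E^G(X)$ as a summand of $E^G(Y)$ through the symmetric monoidal pullback $\Perf(X/G) \hookrightarrow \Perf(Y/G)$ equipped with a $\Perf(\pt)$-linear spectral retraction. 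Naturality of \eqref{equation: computation of truncating invariants} together with this retraction then descends both the ring isomorphism and the finite projectivity from $Y$ to $X$, via the standard argument that a natural transformation of functors which is an isomorphism on an object is also an isomorphism on each of its retracts. For split abstract blowups, the split version of~\cite[Corollary 5.2.6]{ES21} (or~\cite{LS25}) turns the square $(X, Y, Z, E)$ into a split homotopy fiber square, yielding split short exact sequences of $E^G_0(\pt)$-modules in every degree $i$, with each of the four terms a direct summand of the direct sum of the other three; finite projectivity therefore satisfies the desired 3-out-of-4 property, and splitness makes $\otimes_{E_0(\pt)} E_\bullet(\pt)$ preserve the degree-zero sequence, so a five-lemma argument on the natural map between the tensored and the $E^G_\bullet$ short exact sequences propagates \eqref{equation: computation of truncating invariants}.
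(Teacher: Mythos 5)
Your proposal is correct and follows essentially the same route as the paper: induction over the closure properties of Definition \ref{definition: closure properties on schemes}, with the base case handled via the decomposition of $\Perf(BG)$ and finitariness, the (stratified) projective bundle steps via semi-orthogonal decompositions (using \cite[Corollary 5.2.3]{ES21}/\cite{LS25} to pass to derived projectivizations), the split abstract blowup step via the resulting split short exact sequences being preserved by $-\otimes_{E_0(\pt)}E_\bullet(\pt)$, and finite projectivity propagated because every step only involves finite direct sums and retracts. Your five-lemma and retraction remarks just make explicit what the paper states more tersely.
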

\begin{proof}
Since $E$ is multiplicative, the universal property of tensor products induces is a natural ring homomorphism 
\begin{equation}\label{equation: natural tensor map on homotopy groups}
E_0(-)\underset{E_0(\pt)}{\otimes} E_{\bullet}(\pt) \to E_{\bullet}(-)    
\end{equation}
and we only need to check that it is an isomorphism on the underlying abelian groups. We check that this statement is stable under the closure properties from Definition \ref{definition: closure properties on schemes}.

For non-equivariant $\pt$, the map \eqref{equation: natural tensor map on homotopy groups} is tautologically an isomorphism. Since $E$ is finitary, both sides of \eqref{equation: natural tensor map on homotopy groups} commute with the infinite direct sum \eqref{equation: decomposition of BG}, giving
\begin{equation}\label{equation: equivariant versus non-equivariant}
E^G_0(\pt) \underset{E_0(\pt)}{\otimes} E_{\bullet}(\pt) \xrightarrow{\cong}  E^G_{\bullet}(\pt). 
\end{equation}
For the same reason, the statement is stable under disjoint unions. Altogether, we have checked the closure property \eqref{closure property: the point}.

Given $\eD \in \Cat^{\perf}_k$ with a semi-orthogonal decomposition $\eD = \langle \eD_j \rangle_{ j=0}^m$ and $i \in \Z$, we have
\begin{equation*}
E_i(\eD) = \bigoplus_{j =0}^m E_i(\eD_j), \qquad \text{hence} \qquad  E_0(\eD)\underset{E_0(\pt)}{\otimes} E_{\bullet}(\pt) = \bigoplus_{j = 0}^m E_0(\eD_j)\underset{E_0(\pt)}{\otimes} E_{\bullet}(\pt),  
\end{equation*}
so both sides of \eqref{equation: natural tensor map on homotopy groups} send semi-orthogonal decompositions to direct sums.

In particular, stability under \eqref{closure property: projective bundles} follows from the semi-orthogonal decompositions for projective bundles. Since $E$ is truncating, the closure property \eqref{closure property: stratified projective bundles} follows from the the semi-orthogonal decompositions of derived projectivizations \cite{Jia23} together with the independence of $E_i(-)$ on derived structures \cite[Corollary 5.2.3]{ES21}.

Finally, since $E$ is truncating, any noetherian split abstract blowup $(X, Y, Z, E)$ induces a long exact sequence on homotopy groups of $E$ by \cite[Corollary 5.2.3]{ES21}, which falls apart into split short exact sequences
\begin{equation*}
    0 \to E^G_i(X) \to E^G_i(Y) \oplus E^G_i(Z) \to E^G_i(E) \to 0, \qquad i\in \Z. 
\end{equation*}
In particular, we get the split short exact sequence
\begin{equation*}
    0 \to E^G_0(X)\underset{E_0(\pt)}{\otimes} E_{\bullet}(\pt) \to (E^G_0(Y)\underset{E_0(\pt)}{\otimes} E_{\bullet}(\pt)) \oplus (E^G_0(Z)\underset{E_0(\pt)}{\otimes} E_{\bullet}(\pt) ) \to E^G_0(E)\underset{E_0(\pt)}{\otimes} E_{\bullet}(\pt) \to 0. 
\end{equation*}
Altogether, both sides of \eqref{equation: natural tensor map on homotopy groups} behave compatibly under the closure property \eqref{closure property: 3-out-of-4 for split abstract blowups}.

Finally, in each step \eqref{closure property: the point}-\eqref{closure property: 3-out-of-4 for split abstract blowups} we are only passing to finite direct sums or retracts. It is thus clear that the property of $E^G_0(X)$ being finite projective over $E^G_0(\pt)$ is preserved.
\end{proof}

\begin{remark}
The statements of Theorems \ref{theorem: a degree zero version}, \ref{theorem: computation of truncating invariants} are not true for the bigger class $\eC_k^G$ by Example \ref{example: projective nodal curve}.   
\end{remark}

\begin{remark}
The isomorphism from Theorem \ref{theorem: computation of truncating invariants} refines to
\begin{equation*}\label{equation: computation of truncating invariants II}
E^G_0(X) \underset{E_0(\pt)}{\otimes} E_{\bullet}(\pt) \xrightarrow{\cong} E^G_0(X) \underset{E^G_0(\pt)}{\otimes} E^G_{\bullet}(\pt) \xrightarrow{\cong} E^G_{\bullet}(X).
\end{equation*}
Indeed, we have natural maps as indicated; the first one is also an isomorphism by \eqref{equation: equivariant versus non-equivariant}.
\end{remark}

For the next remark, recall that a map of $\mathbb{E}_{\infty}$-ring spectra $A \to B$ is called {\it (finite) faithfully flat} if the map of commutative rings $\pi_0(A) \to \pi_0(B)$ is (finite) faithfully flat and $\pi_{\bullet}(B) \cong \pi_{0}(B) \otimes_{\pi_{0}(A)} \pi_{\bullet}(A)$. In this language, we can rephrase the above results as follows.
\begin{remark}
Concisely, $E^G(X)$ is faithfully flat over $E(\pt)$. It is finite faithfully flat over $E^G(\pt)$.
\end{remark}

\section{Applications to concrete localizing invariants}\label{section: applications to concrete localizing invariants}
We now present several instances of the above theorems applied to concrete localizing invariants. The most interesting applications concern global situations in positive and mixed characteristic where the $p$-adic cyclotomic trace becomes an equivalence \S \ref{section: the p-adic cyclotomic trace}; situations over the rationals where the Goodwillie--Jones trace induces an isomorphism in degree zero \S \ref{section: rational goodwillie--jones trace in degree zero}; and situations where homotopy-invariant $K$-theory becomes equivariantly formal \S \ref{section: homotopy invariant K-theory and equivariant formality}. The lastly mentioned formality gives, for instance, a way to compute $KH$ of complex simple varieties from their topological $K$-theory $K_{\top}$.

\subsection{The \texorpdfstring{$p$}{p}-adic cyclotomic trace}\label{section: the p-adic cyclotomic trace}
As a first application, we show in Theorem \ref{corollary: K and TC} that the mod-$p$ cyclotomic trace is an equivalence on simple varieties with nice group actions; this already covers the geometric examples of our interest.  

We then explain a mild generalization of the result including certain henselian situations in Theorem \ref{theorem: p-adic cyclotomic trace of Cp} -- a reader interested in varieties over a field can skip this variant.

\begin{setup}
Throughout this section, assume $k$ is an algebraically closed field of characteristic $p$. Let $G$ be a nice group scheme over $k$.
\end{setup}

\subsubsection{The \texorpdfstring{$p$}{p}-adic cyclotomic trace on simple varieties.}\label{section: the p-adic cyclotomic trace on simple varieties}
We give the following application of Theorem \ref{theorem: vanishing of truncating invariants} to simple varieties from $\eC^G_k$.

\begin{theorem}[Mod-$p$ cyclotomic trace]\label{corollary: K and TC}
For any $X \in \eC^G_{k}$, the mod-$p$ cyclotomic trace induces an equivalence
\begin{equation*}
    K^G(X; \F_p) \xrightarrow{\simeq} TC^G(X; \F_p).
\end{equation*}
\end{theorem}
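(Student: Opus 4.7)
The plan is to apply Theorem \ref{theorem: vanishing of truncating invariants} to the fiber of the mod-$p$ cyclotomic trace. I would set
$$K^{\inf}(-; \F_p) := \fib\bigl(K(-; \F_p) \to TC(-; \F_p)\bigr);$$
by the recollection in \S \ref{section: localizing and truncating invariants}, the integral fiber $K^{\inf}(-)$ is a $k$-linear truncating invariant, and this property is inherited by its mod-$p$ reduction. It therefore suffices to verify the hypothesis $K^{\inf, G}(\pt; \F_p) \simeq 0$, i.e.\ that the mod-$p$ cyclotomic trace is already an equivalence on the classifying stack $BG$. Once this input is in hand, Theorem \ref{theorem: vanishing of truncating invariants} propagates the vanishing to every $X \in \eC^G_k$, and the lax monoidality of $K$ and $TC$ automatically promotes the resulting map to an equivalence of $\mathbb{E}_\infty$-ring spectra.

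For the non-equivariant base case $\pt = \Spec k$ with $k$ algebraically closed of characteristic $p$, the mod-$p$ cyclotomic trace $K(k; \F_p) \xrightarrow{\simeq} TC(k; \F_p)$ is classical: both sides are concentrated in degree zero and equal to $\F_p$, combining Hesselholt--Madsen's computation of $TC$ of perfect fields with the Quillen/Geisser--Levine calculation of $K(k; \F_p)$. To propagate this from $\pt$ to $BG$, I would use that $G$ is nice, so Discussion \ref{discussion: decomposition of BG} provides a decomposition $\Perf(BG) \simeq \bigoplus_{\Irr(G)} \Perf(k)$. If $K^{\inf}(-; \F_p)$ were finitary, the vanishing on $BG$ would follow immediately from Lemma \ref{lemma: value on equivariant point}; but since $TC$ involves a limit, finitariness is not a priori available. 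Following the hint of Remark \ref{remark: finitary not so important}, I would handle the issue by hand: since $K(k; \F_p)$ and $TC(k; \F_p)$ are both bounded (in fact concentrated in degree zero), the trace equivalence on each individual summand assembles into an equivalence on the totals.

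With the hypothesis verified, an application of Theorem \ref{theorem: vanishing of truncating invariants} yields $K^{\inf, G}(X; \F_p) \simeq 0$ for every $X \in \eC^G_{k}$, which is exactly the claimed equivalence. I expect the main obstacle to be precisely this ``by hand'' treatment of the infinite direct sum decomposition of $\Perf(BG)$: commuting the non-finitary invariant $K^{\inf}(-; \F_p)$ through the infinite decomposition requires the concrete boundedness input, and is the only step that is not pure formalism once Theorem \ref{theorem: vanishing of truncating invariants} is available. Everything else in the argument -- the propagation through projective bundles, stratified projectivizations, and abstract blowups -- has already been packaged into the closure properties defining $\eC^G_k$.
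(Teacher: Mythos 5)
Your overall strategy is exactly the paper's: pass to the truncating invariant $K^{\inf}(-;\F_p)=\fib(K(-;\F_p)\to TC(-;\F_p))$, prove it vanishes on $BG$, and invoke Theorem \ref{theorem: vanishing of truncating invariants}. The non-equivariant base case is also handled the same way (both $K(k;\F_p)$ and $TC(k;\F_p)$ are $\F_p$ in degree $0$ for $k=\overline{k}$ of characteristic $p$). The one place where your write-up falls short of a proof is precisely the step you flag as the main obstacle: you assert that because $K(k;\F_p)$ and $TC(k;\F_p)$ are concentrated in degree zero, "the trace equivalence on each individual summand assembles into an equivalence on the totals." Boundedness of the values on the summands does not by itself give this: the problem is to identify $TC(BG;\F_p)$ with $\bigoplus_{\Irr(G)}TC(\pt;\F_p)$ at all, i.e.\ to commute $TC(-;\F_p)$ -- which is built from limits (homotopy fixed points and Tate constructions) and is not finitary -- through the infinite direct sum decomposition of $\Perf(BG)$. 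Knowing the value on each summand says nothing about the value on the colimit unless you have a co-continuity statement for the invariant, and that is a genuine theorem, not formal bookkeeping.

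The paper's Lemma \ref{lemma: cyclotomic trace on equivariant point} supplies exactly this missing input, and the relevant "boundedness" is connectivity of $THH$, not the degree-zero concentration of $TC(k;\F_p)$: since $THH(-)$ is finitary, the decomposition $\Perf(BG)\simeq\bigoplus_{\Irr(G)}\Perf(k)$ gives $THH(BG)\simeq\bigoplus_{\Irr(G)}THH(\pt)$, and because the forgetful functor $\CycSp\to\Sp$ preserves colimits this is an equivalence of \emph{connective cyclotomic spectra}; one then invokes the theorem of Clausen--Mathew--Morrow that $TC(-;\F_p)\colon\CycSp_{\geq 0}\to\Sp$ commutes with colimits to conclude $TC(BG;\F_p)\simeq\bigoplus_{\Irr(G)}TC(\pt;\F_p)$, while the $K$-theory side is immediate since $K$ is finitary. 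With that lemma in place your argument is complete and agrees with the paper's; without it (or an equivalent substitute), the assembly step does not follow from the boundedness you cite.
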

\begin{proof}
Note that the fiber $K^{\inf}(-; \F_p)$ is a truncating invariant. We check that $K^{\inf}(BG; \F_p) \simeq 0$ in Lemma \ref{lemma: cyclotomic trace on equivariant point} below, hence $K^{\inf}(X/G; \F_p)\simeq 0$ for any $X \in \eC^G_k$ by Theorem \ref{theorem: vanishing of truncating invariants} as desired.
\end{proof}

\begin{remark}[$p$-adic cyclotomic trace]
The same conclusion formally follows for mod-$p^j$ and $p$-adic versions: for any $X \in \eC^G_k$, we have 
\begin{equation*}
\forall j \in \N, \ K^G(X; \Z/p^j) \xrightarrow{\simeq} TC^G(X; \Z/p^j) 
\qquad \text{and} \qquad 
K^G(X; \Z_p) \xrightarrow{\simeq} TC^G(X; \Z_p).    
\end{equation*}    
\end{remark}

\begin{remark}[Non-equivariant case is interesting]\label{remark: non-equivariant case is interesting}
The above statement is interesting already non-equivariantly: for any $X \in \eC_k$, it gives an equivalence $K(X; \F_p) \xrightarrow{\simeq} TC(X; \F_p)$.
In general, it is known that the cyclotomic trace $K(X; \F_p) \to TC(X; \F_p)$ realizes the target as the étale sheafification of the source by \cite[Theorem 6.3]{CMM21}; also see \cite{GH99} for results in the smooth case. In other words, our Theorem \ref{corollary: K and TC} asserts that the étale sheafification is not necessary after evaluation on $X \in \eC_k$. To us, this is far from obvious: we do not know how to deduce our result by studying the étale site of $X$.
\end{remark}

To complete the proof of the theorem, we still need to supply the case of the equivariant point.
\begin{lemma}\label{lemma: cyclotomic trace on equivariant point}
We have an equivalence 
$$K^G(\pt; \F_p) \xrightarrow{\simeq} TC^G(\pt; \F_p).$$
In other words, $K^{\inf}(BG; \F_p) \simeq 0$.  
\end{lemma}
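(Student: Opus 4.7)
The plan is to reduce the equivariant statement to its non-equivariant counterpart via the decomposability of $\Perf(BG)$ and to invoke classical vanishing results for $K^{\inf}$ on perfect fields.

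Since $k$ is algebraically closed and $G$ is nice (hence linearly reductive), Discussion \ref{discussion: decomposition of BG}(i) yields the decomposition
\begin{equation*}
    \Perf(BG) \simeq \bigoplus_{V \in \Irr(G)} \Perf(k)
\end{equation*}
in $\Cat^{\perf}_k$. The task is therefore to understand the cyclotomic trace on this infinite direct sum. On each factor $\Perf(k)$ the inputs are classical: $K(k; \F_p) \simeq \F_p[0]$ (Quillen's computation for $\overline{\F}_p$, extended by Suslin rigidity to arbitrary algebraically closed $k$ of characteristic $p$) and $TC(k; \F_p) \simeq \F_p[0]$ (Hesselholt--Madsen for perfect fields of characteristic $p$), and the cyclotomic trace between them is the evident identification; equivalently, $K^{\inf}(k; \F_p) \simeq 0$. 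Since $K$ is finitary and thus commutes with the infinite direct sum above, this immediately gives $K(BG; \F_p) \simeq \bigoplus_I \F_p[0]$.

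The main obstacle is the analogous identification on the $TC$ side, as $TC$ is not finitary in general. The strategy, in the spirit of Remark \ref{remark: finitary not so important}, is to exploit that $TC(k; \F_p)$ is $0$-truncated. I would write $\Perf(BG)$ as the filtered colimit of its finite sub-sums $\eD_J := \bigoplus_{V \in J} \Perf(k)$ over finite $J \subseteq \Irr(G)$, and observe that the cyclotomic trace is an equivalence on each $\eD_J$ by finite additivity of localizing invariants. The bounded-degree concentration of the values should then allow one to check by hand that the induced map $K(BG; \F_p) \to TC(BG; \F_p)$ remains an equivalence after passing to the colimit: compute $\pi_0 TC(BG; \F_p)$ via the cyclotomic trace from $\pi_0 K(BG; \F_p) = \bigoplus_I \F_p$, and verify the vanishing of higher homotopy groups directly using that the finite-stage values $TC(\eD_J; \F_p) \simeq \bigoplus_J \F_p[0]$ are uniformly $0$-truncated. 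Assembling these identifications, the cyclotomic trace on $BG$ becomes the identity on $\bigoplus_I \F_p[0]$, yielding the desired equivalence.
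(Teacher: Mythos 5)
Your reduction to the non-equivariant point and your treatment of the $K$-theory side are fine, and you correctly identify the real obstacle: $TC$ is not finitary, so the decomposition $\Perf(BG)\simeq\bigoplus_{\Irr(G)}\Perf(k)$ does not automatically pass through $TC(-;\F_p)$. But your proposed remedy does not constitute a proof. First, computing $\pi_0 TC(BG;\F_p)$ ``via the cyclotomic trace from $\pi_0K(BG;\F_p)$'' is circular: the trace being an isomorphism on $\pi_0$ of $BG$ is part of what you are trying to prove. Second, the uniform $0$-truncatedness of the finite-stage values $TC(\eD_J;\F_p)\simeq\bigoplus_J\F_p$ gives you no handle on $TC(BG;\F_p)$ itself: the whole question is whether $TC(-;\F_p)$ commutes with the filtered colimit (equivalently the infinite direct sum), and boundedness of the values at finite stages is not the relevant hypothesis. $TC$ is built from homotopy fixed points and Tate constructions, i.e.\ limits, and these can in principle contribute to $TC$ of the infinite sum in a way invisible at any finite stage; nothing in your sketch rules this out ``by hand.''

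The input that actually closes this gap is boundedness \emph{below of the cyclotomic spectra}, not boundedness of the $TC$-values: since $THH$ is finitary one has $THH(BG)\simeq\bigoplus_{\Irr(G)}THH(k)$, this equivalence holds in cyclotomic spectra (the forgetful functor to spectra preserves colimits), both sides are connective, and then the theorem of Clausen--Mathew--Morrow (\cite[Theorem 2.7]{CMM21}) that $TC(-;\F_p)\colon \CycSp_{\geq 0}\to\Sp$ commutes with colimits yields $TC(BG;\F_p)\simeq\bigoplus_{\Irr(G)}TC(k;\F_p)$. This is exactly how the paper argues, and it is the step your proposal is missing; the hint in Remark \ref{remark: finitary not so important} about ``sufficient boundedness'' refers to the connectivity of $THH$ feeding into this theorem, not to the $0$-truncatedness of $TC(\pt;\F_p)$. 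Your non-equivariant inputs (Quillen/Suslin rigidity for $K$, the known computation of $TC$ of an algebraically closed field mod $p$) are acceptable substitutes for the paper's references, so once the commutation step is supplied by the CMM theorem your argument coincides with the paper's.
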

\begin{proof}
Since $k$ is an algebraically closed field of characteristic $p$, the cyclotomic trace
\begin{equation}\label{equation: K and TC of a point}
K(\pt; \F_p) \xrightarrow{\simeq} TC(\pt; \F_p)   
\end{equation}
is an equivalence. Indeed, $K(\pt)$ lives in degrees $\geq 0$ and all the higher $K$-groups are $\Z[\tfrac{1}{p}]$-modules \cite[Corollary 5.5. and Example (1) below it]{Kra80b}. Since $K_0(\pt) \cong \Z$, we see that $K(\pt; \F_p)$ is concentrated in degree $0$ with value $\F_p$. At the same time, since $k$ is algebraically closed, $TC(\pt; \F_p)$ also lives in degree $0$ with value $\F_p$ by \cite[Example 7.4 and Remark 7.6]{KN18}. The trace map clearly induces an isomorphism between them.

To treat the equivariant case, consider the decomposition \eqref{equation: decomposition of BG}. Since $THH(-)$ is finitary, we get an equivalence
\begin{equation}\label{equation: THH}
THH(BG) \simeq \bigoplus_{\Irr(G)} THH(\pt).    
\end{equation}

Note that $THH(k)$ is connective (this being the case for $THH$ of any connective ring spectrum in place of $k$), so both sides of \eqref{equation: THH} are connective. By general theory, $THH(-)$ carries a natural structure of a cyclotomic spectrum; the forgetful map from cyclotomic spectra to spectra preserves colimits \cite[\S 2.1, p. 416]{CMM21}. Altogether, \eqref{equation: THH} holds in the category $\CycSp$ of cyclotomic spectra, and hence in the category $\CycSp_{\geq 0}$ of connective cyclotomic spectra. Now, by \cite[Theorem 2.7]{CMM21}, the functor $TC(-; \F_p): \CycSp_{\geq 0} \to \Sp$ commutes with colimits. Altogether,
\begin{equation*}\label{equation: TC}
TC(BG; \F_p) \simeq \bigoplus_{\Irr(G)} TC(\pt; \F_p).    
\end{equation*}
Since $K$-theory is finitary, the decomposition \eqref{equation: decomposition of BG} also gives
\begin{equation*}\label{equation: K}
K(BG; \F_p) \simeq \bigoplus_{\Irr(G)} K(\pt; \F_p).    
\end{equation*}
All the above identifications are compatible with the trace map, so we reduce to the case of $\pt$ treated in \eqref{equation: K and TC of a point} above.
\end{proof}

\subsubsection{Strictly henselian base rings.}\label{section: strictly henselian base rings}
In fact, Theorem \ref{corollary: K and TC} can be easily generalized to include certain strictly henselian situations in both equal and mixed characteristic as base cases. To this end, we introduce the following absolute class of simple stacks $\eC_p$ depending only on a prime number $p$, which is given by allowing more general base examples in Definition \ref{definition: closure properties on schemes}. We use this as an opportunity to rephrase the rest of the construction in a stacky way.

We denote $\dStk^{\ans}$ the category of derived algebraic stacks with nice stabilizers, see \cite[\S A.1]{BKRS22}, \cite[\S 2.4 and \S 2.5]{KR21}. We also recall that the notions of abstract blowup squares \cite[Definition 2.1.3]{BKRS22}, Nisnevich squares \cite[Definition 2.1.1]{BKRS22}, derived projectivizations \cite{Jia22a, Jia22b, Jia23} work in this generality. 
We then define the class $\eC_p \subseteq \dStk^{\ans}$ as follows.

\begin{definition}\label{definition: the absolute class Cp}
Let $p$ be a fixed prime. Let $\eC_p \subseteq \dStk^{\ans}$ be the class of derived algebraic stacks given by the following closure properties:
\begin{enumerate}[start=0]
    \item \label{C_p 0} (independence on nilpotence) Let $\eX \in \dStk^{\ans}$. Then $\eX \in \eC_p \iff \eX_{\cl} \in \eC_p \iff \eX_{\red} \in \eC_p$.
    \item \label{C_p 1} \label{closure property: strictly henselian base rings} (base cases) Let $S = \Spec R$ be the spectrum of a strictly henselian, weakly regular, stably coherent ring $R$ of residue characteristic $p$. Let $G$ be a group scheme over $S$ with decomposable representation theory. Then $S / G \in \eC_p$. Moreover, $\eC_p$ is closed on finite disjoint unions.
    \item \label{C_p 2}  (projective bundles) Let $\eX \in \dStk^{\ans}$ and $\eE \in \Vect(\eX)$ of rank $\geq 1$ everywhere, then $\P_{\eX}(\eE) \in \eC_p \iff \eX \in \eC_p$. More generally, let $d_{\bullet}$ be a dimension vector of total dimension $d$ and assume that $\eE$ has rank $\geq d$ everywhere, then $\eX \in \eC_p \iff \Flag_{\eX}(\eE, d_{\bullet}) \in \eC_p$.
    \item \label{C_p 3} (stratified projective bundles) Let $\eX \in \dStk^{\ans}$ and $\eF \in \Qcoh(\eX)$ be of the form $\eF = H_0(\eE)$ for some $\eE \in \Perf^{\geq 0}(\eX)$. Assume $\eF$ has rank $\geq 1$ everywhere, then
    $\P_{\eX}(\eF) \in \eC_p \implies {\eX} \in \eC_p$. More generally, let $d_{\bullet}$ be a dimension vector of total dimension $d$ and assume that $\eF$ has rank $\geq d$ everywhere, then $\Flag_{\eX}(\eF, d_{\bullet}) \in \eC_p \implies {\eX} \in \eC_p$.
    \item \label{C_p 4}  (cdh descent) Given a noetherian abstract blowup square $(\mathscr{X}, \mathscr{Y}, \mathscr{Z}, \mathscr{E})$ in $\dStk^{\ans}$ such that three of its terms lie in $\eC_p$, the fourth one does as well. Similarly for arbitrary Nisnevich squares.
\end{enumerate}
\end{definition}

\begin{example}\label{example: stacks in Cp}
Let us mention some examples of stacks in $\eC_p$.
\begin{itemize}
    \item[(i)] If $G$ is a nice group over any algebraically closed field $k$ of characteristic $p$ and $X \in \eC^G_k$, then $\eX := X/G \in \eC_p$.  
    \item[(ii)] We allow more general affine base rings $R$ to lie in $\eC_p$ in the closure property \eqref{closure property: strictly henselian base rings}. 
    In particular, $R$ can be any regular noetherian strictly henselian ring such as the power series ring $k\llb t \rrb$ or the ring of integers $\breve{\Z}_p$ of the completion of the maximal unramified extension of $\Q_p$. More generally, $R$ can be any strictly henselian valuation ring by \cite[Corollary 2.3]{AMM22}.
    \item[(iii)]  The remaining closure properties then allow to combine the above: for example, the blowup of such regular noetherian strictly henselian $R$ in its special point again lies in $\eC_p$.
\end{itemize}
\end{example}

We then have the following generalization of Theorem \ref{corollary: K and TC} and Lemma \ref{lemma: cyclotomic trace on equivariant point} to the class $\eC_p$. 

\begin{theorem}\label{theorem: p-adic cyclotomic trace of Cp}
For any $\eX \in \eC_p$, the cyclotomic trace induces an equivalence
$$K(\eX; \F_p) \xrightarrow{\simeq} TC(\eX; \F_p).$$
Similarly for the mod-$p^j$ and $p$-adic variants.
\end{theorem}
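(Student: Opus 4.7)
The strategy is to mimic the proof of Theorem \ref{corollary: K and TC}: set $K^{\inf}(-; \F_p) := \fib(K(-;\F_p) \to TC(-;\F_p))$, which is a $k$-linear (in fact absolute) truncating invariant, and show by induction on the closure properties of Definition \ref{definition: the absolute class Cp} that $K^{\inf}(\eX; \F_p) \simeq 0$ for every $\eX \in \eC_p$. The closure properties \eqref{C_p 2}, \eqref{C_p 3}, \eqref{C_p 4} are absorbed exactly as in the proof of Theorem \ref{theorem: vanishing of truncating invariants}: \eqref{C_p 2} via the projective bundle / flag bundle semi-orthogonal decompositions, \eqref{C_p 3} via the Jiang derived projectivization semi-orthogonal decomposition together with the truncating property (which identifies the value on a derived stack with that on its classical truncation in the required generality \cite[Corollary 5.2.3]{ES21}), and \eqref{C_p 4} via cdh/Nisnevich descent of truncating invariants on stacks with nice stabilizers \cite[Corollary 5.2.6]{ES21}, \cite{KR21}. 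Closure under \eqref{C_p 0} is immediate from nilinvariance of truncating invariants.

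The heart of the proof is therefore the base case \eqref{C_p 1}: one must check $K^{\inf}(S/G; \F_p) \simeq 0$ for $S = \Spec R$ strictly henselian, weakly regular, stably coherent of residue characteristic $p$, with $G$ of decomposable representation theory. I would first do the non-equivariant case, that is $K^{\inf}(R; \F_p)\simeq 0$. For such $R$, the cyclotomic trace
\begin{equation*}
K(R; \F_p) \xrightarrow{\simeq} TC(R; \F_p)
\end{equation*}
is known to be an equivalence by the henselian rigidity results for the fiber of the cyclotomic trace (Clausen--Mathew--Morrow \cite{CMM21}, building on Dundas--Goodwillie--McCarthy and the étale comparison): strict henselianness kills the étale-locally remaining piece of $K(-;\F_p)$ relative to $TC(-;\F_p)$, and weak regularity plus stable coherence ensure the relevant finiteness conditions so that the known comparison is applicable; this is exactly the reason for imposing these hypotheses on $R$.

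To pass from $R$ to $S/G$, I would argue as in Lemma \ref{lemma: cyclotomic trace on equivariant point}. Decomposable representation theory provides
\begin{equation*}
\Perf(S/G) \simeq \bigoplus_{I} \Perf(S),
\end{equation*}
so $K(-; \F_p)$, being finitary, splits as $\bigoplus_I K(R; \F_p)$ on $S/G$. For $TC(-; \F_p)$, the same reasoning used in the proof of Lemma \ref{lemma: cyclotomic trace on equivariant point} applies: $THH(R)$ is connective, the categorical decomposition above is a decomposition in connective cyclotomic spectra, and $TC(-; \F_p): \CycSp_{\geq 0} \to \Sp$ commutes with colimits by \cite[Theorem 2.7]{CMM21}. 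Hence $TC(S/G; \F_p) \simeq \bigoplus_I TC(R; \F_p)$, these decompositions are intertwined by the trace, and the non-equivariant case settles the base step. Finally, the mod-$p^j$ and $p$-adic statements follow formally by induction on $j$ and by passing to the limit along the resulting Milnor sequence, exactly as in the remark after Theorem \ref{corollary: K and TC}.

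The main obstacle is the base case, and more precisely the verification of $K^{\inf}(R; \F_p)\simeq 0$ under the stated hypotheses on $R$; the rest of the argument is an essentially mechanical extension of the proofs of Theorems \ref{theorem: vanishing of truncating invariants} and \ref{corollary: K and TC} to the stacky setting with the more general base rings allowed in \eqref{C_p 1}.
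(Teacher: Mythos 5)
Your proposal is correct and takes essentially the same route as the paper: reduce to the vanishing of the truncating invariant $K^{\inf}(-;\F_p)$, handle the closure properties \eqref{C_p 0}--\eqref{C_p 4} exactly as for $\eC^G_k$, and settle the base case \eqref{C_p 1} non-equivariantly and then via decomposable representation theory as in Lemma \ref{lemma: cyclotomic trace on equivariant point}. The only step the paper makes more precise than your ``relevant finiteness conditions'' is that weak regularity and stable coherence are used exactly to identify $K^{\cn}(R)\xrightarrow{\simeq}K(R)$ (by \cite[Proposition 2.4]{AMM22}), after which \cite[Theorem 6.1]{CMM21} for strictly henselian $R$ of residue characteristic $p$ gives $K^{\cn}(R;\F_p)\simeq TC(R;\F_p)$.
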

\begin{proof}
The independence on nilpotent structures \eqref{C_p 0} is \cite{ES21}. The base cases \eqref{C_p 1} follow from Lemma \ref{lemma: strictly henselian base rings} below. Then \eqref{C_p 2} and \eqref{C_p 3} are same as in Theorem \ref{theorem: computation of truncating invariants} using that the semiorthogonal decompositions for projective bundles and derived projectivizations work over $\Z$, see \cite[Theorem 7.5]{Jia22a} and \cite[Remark 1.1]{Jia23}. Finally, \eqref{C_p 4} follows from \cite[Corollary 5.2.6]{ES21}.
\end{proof}

\begin{lemma}\label{lemma: strictly henselian base rings}
Let $S = \Spec R$ be the spectrum of a strictly henselian, weakly regular, stably coherent ring $R$ of residue characteristic $p$. Consider a group $G$ over $S$ with decomposable representation theory. Then the cyclotomic trace induces an equivalence
$$K(S/G; \F_p) \xrightarrow{\simeq} TC(S/G; \F_p).$$
In other words, $K^{\inf}(S/G; \F_p) \simeq 0$.   
\end{lemma}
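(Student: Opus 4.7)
The strategy mirrors the proof of Lemma \ref{lemma: cyclotomic trace on equivariant point}: first reduce to the non-equivariant case via decomposability of $\Perf(S/G)$, then invoke a rigidity-type result for strictly henselian rings of residue characteristic $p$.

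\textbf{Non-equivariant comparison.} The first step is to show that the cyclotomic trace gives an equivalence
\begin{equation*}
K(R; \F_p) \xrightarrow{\simeq} TC(R; \F_p).
\end{equation*}
The key input is \cite[Theorem 6.3]{CMM21}, which identifies $TC(-; \F_p)$ with the étale sheafification of $K(-; \F_p)$ on connective $\mathbb{E}_1$-ring spectra (as already recalled in Remark \ref{remark: non-equivariant case is interesting}). Since $R$ is strictly henselian, any étale sheaf evaluated on $\Spec R$ coincides with its value on the residue field, so no sheafification is needed. The assumptions "weakly regular" and "stably coherent" are precisely the context developed in \cite{AMM22} (cf. Example \ref{example: stacks in Cp}(ii)), under which $K(R; \F_p)$ has the expected rigidity behavior with respect to its residue field. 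Consequently both sides are computed by the residue field, and the trace is an equivalence.

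\textbf{Equivariant promotion.} By decomposability, there is an equivalence $\Perf(S/G) \simeq \bigoplus_I \Perf(S)$ in $\Cat^{\perf}_R$. Since $R$ is connective, $THH(R)$ is a connective cyclotomic spectrum, and because $THH(-)$ is finitary and the forgetful functor $\CycSp \to \Sp$ preserves colimits (as used in Lemma \ref{lemma: cyclotomic trace on equivariant point}), the decomposition promotes to an equivalence in $\CycSp_{\geq 0}$. Applying the colimit-preserving functor $TC(-; \F_p) : \CycSp_{\geq 0} \to \Sp$ of \cite[Theorem 2.7]{CMM21} and the finitary functor $K(-; \F_p)$ then yields
\begin{equation*}
TC(S/G; \F_p) \simeq \bigoplus_I TC(S; \F_p), \qquad K(S/G; \F_p) \simeq \bigoplus_I K(S; \F_p).
\end{equation*}
These identifications are compatible with the trace, reducing the equivariant statement to the non-equivariant case handled in the previous step.

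\textbf{Main obstacle.} The delicate point is verifying that the non-equivariant rigidity is actually available in the precise generality claimed (strictly henselian, weakly regular, stably coherent, possibly mixed characteristic), since the classical arguments (e.g. Gabber rigidity) are formulated for regular noetherian henselian local rings, and the valuation ring case requires \cite{AMM22}. The expository care is in threading the hypotheses through \cite{CMM21} and \cite{AMM22} so that the residue-field reduction holds for both $K(-; \F_p)$ and $TC(-; \F_p)$ in a manner compatible with the cyclotomic trace; once this is in place, the equivariant promotion via decomposable representation theory is formal.
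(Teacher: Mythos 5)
Your equivariant promotion step is fine and is exactly what the paper does: the proof of Lemma \ref{lemma: cyclotomic trace on equivariant point} is repeated word for word with $k$ replaced by $R$, using decomposable representation theory. The gap is in your non-equivariant step. The central claim that, because $R$ is strictly henselian, ``both sides are computed by the residue field'' is false in mixed characteristic -- precisely the cases this lemma is designed to cover (Example \ref{example: stacks in Cp}(ii) includes $R = \breve{\Z}_p$ and $k\llb t \rrb$). Neither $K(-;\F_p)$ nor $TC(-;\F_p)$ is invariant along the henselian surjection $R \to R/\mathfrak{m}$: for instance $K_1(\breve{\Z}_p;\F_p) \cong \breve{\Z}_p^{\times}/p \neq 0$ while $K_1(\overline{\F}_p;\F_p) = 0$. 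What is invariant along henselian pairs with finite coefficients is only the fiber $K^{\inf}$ -- that is the content of the Clausen--Mathew--Morrow theorem, not something that follows from strict henselianness plus ``étale sheaves have no higher stalks''. Strict henselianness only says (modulo finitary and convergence care) that étale sheafification does not change the value at $\Spec R$ itself; it does not allow you to pass to the residue field. You have also misread \cite{AMM22}: Proposition 2.4 there is not a rigidity statement relative to the residue field; it asserts that $K^{\cn}(S) \to K(S)$ is an equivalence (vanishing of negative $K$-theory) for weakly regular stably coherent rings.

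The paper's argument never touches the residue field: by \cite[Proposition 2.4]{AMM22} one has $K^{\cn}(S) \simeq K(S)$, and by \cite[Theorem 6.1]{CMM21} the map $K^{\cn}(S;\F_p) \to TC(S;\F_p)$ is an equivalence for strictly henselian $R$ of residue characteristic $p$; combining the two gives $K(S;\F_p) \simeq TC(S;\F_p)$, after which the decomposition argument proceeds as you describe. If you prefer to run the argument through the étale sheafification statement \cite[Theorem 6.3]{CMM21}, the correct conclusion at a strictly henselian ring is $L_{\mathrm{\acute{e}t}}K(R;\F_p) \simeq K(R;\F_p)$ (not a reduction to the residue field), and you would still need weak regularity and stable coherence, via \cite{AMM22}, to control nonconnective $K$-theory, plus a justification that the sheafification is computed stalkwise in this generality; the route through \cite[Theorem 6.1]{CMM21} avoids these issues.
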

\begin{proof}
For weakly regular, stably coherent rings $R$, the map $K^{\cn}(S) \to K(S)$ from connective $K$-theory to $K$-theory is an equivalence by \cite[Proposition 2.4]{AMM22}. Moreover, for strictly henselian rings $R$ of residue characteristic $p$, the map $K^{\cn}(S; \F_p) \to TC(S; \F_p)$ is an equivalence by \cite[Theorem 6.1]{CMM21}. Altogether, we have an equivalence
\begin{equation*}\label{equation: K and TC of R}
    K(S; \F_p) \xrightarrow{\simeq} TC(S; \F_p).
\end{equation*}
The rest of the proof of Lemma \ref{lemma: cyclotomic trace on equivariant point} then works word for word with $k$ replaced by $R$, using that $G$ has decomposable representation theory by assumption.
\end{proof}

\subsection{Rational Goodwillie--Jones trace}\label{section: rational goodwillie--jones trace in degree zero}

Consider the trace map $K(-; \Q) \to HC^{-}(- / \Q)$ from rational $K$-theory to $\Q$-linear negative cyclic homology. We prove that on $\eB_{\Q}$, the rational Goodwillie--Jones trace induces an isomorphism in degree zero in Theorem \ref{theorem: rational goodwillie--jones trace in degree zero}. We also deduce a counterintuitive direct sum decomposition for positive degrees in Proposition \ref{proposition: direct sum in positive degrees}.

\begin{setup}
Throughout this subsection we take $k = \Q$. Let $G$ be a nice group over $\Q$. Via Appendix \ref{appendix: actions of linearly reductive G}, the results generalize to any reductive group $G$ over $\Q$.   
\end{setup}

\subsubsection{Rational Goodwillie--Jones trace in degree zero.}\label{section: rational goodwillie-jones trace in degree zeo}
We start by discussing the following degree zero isomorphism.
\begin{theorem}[Rational Goodwillie--Jones trace in degree zero]\label{theorem: rational goodwillie--jones trace in degree zero}
For any $X \in \eB^G_{\Q}$, the trace induces a ring isomorphism
\begin{equation*}
    K^G_0(X; \Q) \xrightarrow{\cong} (HC^{-})^G_0(X / \Q).
\end{equation*}
\end{theorem}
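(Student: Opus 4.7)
The plan is to apply the truncating machinery of Theorem \ref{theorem: a degree zero version} to the fiber
\begin{equation*}
F := \fib\bigl(K(-;\Q) \to HC^{-}(-/\Q)\bigr).
\end{equation*}
The essential input is that $F$ is a truncating invariant of $\Q$-linear stable $\infty$-categories. This is a form of Goodwillie's classical theorem on rational relative $K$-theory: for a connective $\mathbb{E}_1$-algebra $A$ over $\Q$, the map $A \to \pi_0 A$ has matching relative rational $K$-theory and relative negative cyclic homology, so $F(A) \simeq F(\pi_0 A)$; a modern treatment in the language of localizing invariants appears in \cite{LT19}. Both $K(-;\Q)$ and $HC^{-}(-/\Q)$ are lax symmetric monoidal and the Goodwillie--Jones trace is a map of $\mathbb{E}_{\infty}$-ring spectra, which takes care of the ring structure on the resulting identification.

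Given this, I apply Theorem \ref{theorem: a degree zero version} twice to $F$, in degrees $i = 0$ and $i = -1$. The first yields $F^G_0(X) = 0$ (injectivity of the trace in degree zero) and the second yields $F^G_{-1}(X) = 0$ (surjectivity in degree zero) for every $X \in \eB^G_{\Q}$, jointly giving the claimed isomorphism. In both cases the theorem reduces the statement to the equivariant point: it suffices to check that the rational Goodwillie--Jones trace is an isomorphism on $BG$ in degrees $0$ and $-1$.

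For the equivariant point, I invoke the decomposition $\Perf(BG) \simeq \bigoplus_{\Irr(G)} \Perf(\Q)$ coming from the semisimplicity of $\Rep_{\Q}(G)$ (Discussion \ref{discussion: decomposition of BG}). Since $K$-theory is finitary, Lemma \ref{lemma: value on equivariant point} gives $K^G_n(\pt;\Q) \cong \bigoplus_{\Irr(G)} K_n(\Q;\Q)$, which equals $R(G) \otimes_{\Z} \Q$ for $n = 0$ and vanishes for $n = -1$. For $HC^{-}(-/\Q)$, which is not finitary, I argue along the lines of Remark \ref{remark: finitary not so important}: the $S^1$-action on $HH(-/\Q)$ respects the semisimple decomposition, and $HC^{-}(\Q/\Q)$ is concentrated in even non-positive degrees with value $\Q$, so one can check by hand that $(-)^{hS^1}$ commutes with the direct sum in the low degrees of interest. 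Matching the Chern characters of the irreducible summands then identifies the two sides.

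The main obstacle I anticipate is precisely this last step: handling the non-finitary invariant $HC^{-}$ at $BG$ in order to verify the input on the equivariant point. Once that is in place, the rest of the argument is a formal application of the degreewise truncating machinery already established in \S \ref{section: degreewise versions}.
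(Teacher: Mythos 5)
Your proposal is correct and follows essentially the same route as the paper: the fiber $K^{\inf}(-;\Q)$ of the Goodwillie--Jones trace is truncating, Theorem \ref{theorem: a degree zero version} is applied in degrees $0$ and $-1$, and the remaining input at $BG$ is handled via the decomposition \eqref{equation: decomposition of BG}. Your ``by hand'' treatment of the non-finitary $HC^{-}$ at the equivariant point is exactly what the paper's Lemma \ref{lemma: vanishing on BG} does, by noting $HH(BG/\Q) \simeq R(G;\Q)$ in degree zero (since $HH$ is finitary) and then computing $HC^{-}_{\bullet}(BG/\Q) \cong HC^{-}_0(BG/\Q)[u]$, which in particular gives the vanishing of $HC^{-}_{1}(BG/\Q)$ needed alongside the degree-$0$ and degree-$(-1)$ comparisons.
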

\begin{proof}
The fiber $K^{\inf}(-; \Q) = \fib(K(-; \Q) \to HC^{-}(-/ \Q))$ is a truncating invariant \cite[proof of Corollary 3.9]{LT19}. We check that $K^{\inf}_0(BG; \Q) \cong 0 \cong K^{\inf}_{-1}(BG; \Q)$ in Lemma \ref{lemma: vanishing on BT} below.
We then apply Theorem \ref{theorem: a degree zero version} to $K^{\inf}(-; \Q)$ for $i = 0, -1$ separately. We deduce that for all $X \in \eB^G_k$ it holds that $K^{\inf}_0(X/G; \Q) \cong 0 \cong K^{\inf}_{-1}(X/G; \Q)$, so we get back the desired claim from the long exact sequence associated to the fiber sequence $K^{\inf}(-; \Q) \to K(-; \Q) \to HC^{-}(-/ \Q)$ on $X/G$.
\end{proof}

Again, we still need to supply the case of an equivariant point.
\begin{lemma}\label{lemma: vanishing on BT}\label{lemma: vanishing on BG}
We have $K^{\inf}_0(BG; \Q) \simeq 0 \simeq K^{\inf}_{-1}(BG; \Q)$.    
\end{lemma}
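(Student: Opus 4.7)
The plan is to compute $K(BG;\Q)$ and $HC^{-}(BG/\Q)$ explicitly in degrees $-1$, $0$, $1$ and then read off the vanishings from the long exact sequence of the fiber sequence $K^{\inf}(-;\Q)\to K(-;\Q)\to HC^{-}(-/\Q)$. The starting point is the decomposition
\begin{equation*}
    \Perf(BG) \simeq \bigoplus_{I}\Perf(\Q), \qquad I := \Irr(G),
\end{equation*}
from Discussion \ref{discussion: decomposition of BG}, which applies since $G$ is linearly reductive over a characteristic-zero field.

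First, as $K$ is finitary, Lemma \ref{lemma: value on equivariant point} gives $K(BG;\Q) \simeq \bigoplus_I K(\Q;\Q)$. Connectivity of $K(\Q)$ then yields $K_{-1}(BG;\Q) = 0$ and $K_0(BG;\Q) \cong \bigoplus_I \Q \cong R(G)\otimes_\Z\Q$.

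Next I would compute $HC^{-}(BG/\Q)$ in low degrees. Since $HH(-/\Q)$ is finitary, $HH(BG/\Q) \simeq \bigoplus_I HH(\Q/\Q) \simeq \bigoplus_I \Q$ is concentrated in degree $0$; the induced $S^1$-action is necessarily trivial, as $S^1$-actions on $H\Q$ are classified by $H^1(BS^1;\Q^\times) = 0$. Hence $HC^{-}(BG/\Q) \simeq HH(BG/\Q)^{hS^1}$ can be computed from the homotopy fixed point spectral sequence, which degenerates since $HH(BG/\Q)$ lives in a single row, giving
\begin{equation*}
    \pi_{-n}\bigl(HC^{-}(BG/\Q)\bigr) \;\cong\; H^n\bigl(BS^1;\ \textstyle\bigoplus_I \Q\bigr),
\end{equation*}
which vanishes for $n$ odd and equals $\bigoplus_I \Q$ for $n\geq 0$ even. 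In particular $(HC^{-})_0(BG/\Q)\cong \bigoplus_I\Q$ and $(HC^{-})_{-1}(BG/\Q) = (HC^{-})_1(BG/\Q) = 0$.

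Finally, I would verify that the trace $K_0(BG;\Q) \to (HC^{-})_0(BG/\Q)$ is an isomorphism. By naturality under the decomposition, it restricts on each summand to the standard trace $K_0(\Q;\Q) \to (HC^{-})_0(\Q/\Q)$ sending $[\Q]\mapsto 1$, so globally it is the identity on $\bigoplus_I\Q$. Using $(HC^{-})_1 = 0 = K_{-1}$, the relevant portion of the long exact sequence
\begin{equation*}
    0 \to K^{\inf}_0(BG;\Q) \to K_0(BG;\Q) \xrightarrow{\cong} (HC^{-})_0(BG/\Q) \to K^{\inf}_{-1}(BG;\Q) \to 0
\end{equation*}
forces both $K^{\inf}_0(BG;\Q) = 0$ and $K^{\inf}_{-1}(BG;\Q) = 0$. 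The main technical subtlety is that $HC^{-}$ is not finitary, so one cannot commute it past the infinite direct sum; the workaround is the explicit homotopy-fixed-point computation, which is tractable precisely because $HH(BG/\Q)$ sits in a single degree with trivial $S^1$-action.
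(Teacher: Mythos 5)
Your proof is correct and takes essentially the same route as the paper: both use the decomposition of $\Perf(BG)$ into copies of $\Perf(\Q)$, finitariness of $K(-;\Q)$ and $HH(-/\Q)$ to reduce to the point, the fact that $HC^{-}$ of a degree-zero $HH$ is a polynomial ring on a degree $-2$ generator, and then the long exact sequence together with the degree-zero trace isomorphism $K_0(BG;\Q)\cong R(G;\Q)\cong HC^{-}_0(BG/\Q)$. The only cosmetic difference is how the non-finitariness of $HC^{-}$ is handled: you argue that the $S^1$-action on the degree-zero spectrum is trivial (an argument you state for $H\Q$ but which applies verbatim to $\bigoplus_I H\Q$, whose automorphism space is likewise discrete) and run the homotopy fixed point spectral sequence, whereas the paper invokes the classical Loday construction of $HC^{-}$ from $HH$.
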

\begin{proof}
Since $HH(\pt/ \Q) \simeq \Q$ sitting in degree zero and $HH(-/ \Q)$ is finitary, we deduce from \eqref{equation: decomposition of BG} that $HH(BG/ \Q) \simeq R(G; \Q)$ is the rationalized representation ring sitting in degree zero. Now by the classical construction of $HC^{-}(-/ \Q)$ from $HH(-/ \Q)$ as in \cite[\S 5.1.7]{Lod97}, which is compatible with the categorical definition by \cite{Hoy18}, we see that $HC^{-}_{\bullet}(BG/ \Q) \cong HC^{-}_0(BG/ \Q)[u]$ as rings with the free generator $u$ sitting in degree $-2$. In other words, $HC^{-}(- / \Q)$ commutes with the direct sum decomposition \eqref{equation: decomposition of BG}. The same is true for $K(-; \Q)$, as it is finitary.

We display the following piece of the associated long exact sequence on $BG$:
\begin{equation*}
    HC^{-}_1(BG/ \Q) \to K^{\inf}_0(BG; \Q) \to K_0(BG; \Q) \to HC^{-}_0(BG/ \Q) \to K^{\inf}_{-1}(BG; \Q) \to K_{-1}(BG; \Q)
\end{equation*}
Now, $HC^{-}_1(BG/ \Q) \cong 0$ by above, the trace induces an isomorphism $K_0(BG; \Q) \cong R(G; \Q) \cong HC^{-}_0(BG / \Q)$, and $K_{-1}(BG; \Q) \cong 0$ by smoothness. Therefore $K^{\inf}_0(BG; \Q) \cong 0 \cong K^{\inf}_{-1}(BG; \Q)$ as desired.
\end{proof}

\begin{remark}
We also deduce the corresponding claim for the cdh-sheaffified theories from Appendix \ref{appendix: cdh sheaffified HH}: for all $X \in \eB^G_k$, we have an equivalence
\begin{equation*}
    KH^G_0(X; \Q) \xrightarrow{\cong} (\LHCn)^G_0(X/ \Q).
\end{equation*}
This is a weaker claim, where both sides are more computable.
\end{remark}

\begin{remark}
Under the geometric interpretation of equivariant negative cyclic homology via derived loop stacks \cite{BFN10, Chen20, HSS17, Toe14} using the fixed point scheme notation \cite[\S 1]{Low24}, we can rewrite Theorem \ref{theorem: rational goodwillie--jones trace in degree zero} as
\begin{equation*}
    K^G_0(X; \Q) \xrightarrow{\cong} \pi_0(\RG(\Fix^{\mathbf{L}}_{\frac{G}{G}}(X), \O)^{hS^1}).
\end{equation*}
In words, it gives an isomorphism between the zeroeth equivariant $K$-group of $X$ and homotopy-$S^1$-invariant global functions on the associated derived fixed-point scheme. 
\end{remark}

\subsubsection{A decomposition for positive degrees.}\label{section: a decomposition for positive degrees}
Let us complement Theorem \ref{theorem: rational goodwillie--jones trace in degree zero} with a slightly counterintuitive result on the positive homotopical degrees. In general, algebraic $K$-theory can be described by gluing the homotopy-invariant contribution of $KH(-)$ with the contribution of $\HCn(-/\Q)$ along their maps to the cdh-sheafification $\LHCn(-/\Q)$, which can be regarded as a truncating invariant. We recall this thoroughly in Appendix \ref{appendix: cdh sheaffified negative cyclic homology}.

It turns out that for simple varieties, the gluing condition is vacuous in positive degrees -- their positive $K$-groups decompose into the direct sum of $KH(-)$ and $HC^-(-/\Q)$.
\begin{proposition}\label{proposition: direct sum in positive degrees}
Let $X \in \eB^G_{\Q}$, then
\begin{equation*}
    K^G_i(X) \cong KH^G_i(X) \oplus (\HCn)^G_i(X/\Q), \qquad \forall i \geq 1
\end{equation*}
\end{proposition}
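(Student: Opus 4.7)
The plan is to exploit the rational pullback square recalled in Appendix~\ref{appendix: cdh sheaffified negative cyclic homology}, which expresses $K$-theory as glued from $KH$ and $\HCn$ along their maps to $\LHCn$. Concretely, it yields a fiber sequence
\[
K(-;\Q) \longrightarrow KH(-;\Q) \oplus \HCn(-/\Q) \longrightarrow \LHCn(-/\Q),
\]
and I will extract the desired direct sum from the associated Mayer--Vietoris long exact sequence on homotopy groups.

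The key step is to establish the vanishing $\LHCn^G_i(X/\Q) = 0$ for every $X \in \eB^G_\Q$ and every $i \geq 1$. Since $\LHCn(-/\Q)$ is a $\Q$-linear truncating invariant, Theorem~\ref{theorem: a degree zero version} applied degree by degree reduces this to the analogous vanishing on the equivariant point. Now the computation in the proof of Lemma~\ref{lemma: vanishing on BG} shows $\HCn^G_\bullet(\pt) \cong R(G;\Q)[u]$ with $u$ in degree $-2$, so it vanishes in positive degrees; since $BG$ is smooth over $\Q$, the cdh-sheafification $\HCn \to \LHCn$ should be an equivalence there, and the same vanishing passes to $\LHCn^G_\bullet(\pt)$. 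Granted this, the long exact sequence around degree $i \geq 1$ reads
\[
\underbrace{\LHCn^G_{i+1}(X/\Q)}_{=\,0} \to K^G_i(X) \to KH^G_i(X) \oplus \HCn^G_i(X/\Q) \to \underbrace{\LHCn^G_i(X/\Q)}_{=\,0},
\]
immediately yielding the asserted decomposition.

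The one non-formal ingredient, and hence the main obstacle, is the verification that $\LHCn$ vanishes in positive degrees on $BG$: one has to know that cdh-sheafification of $\HCn$ is trivial on smooth stacks with nice (or linearly reductive) stabilizers, which presumably is a consequence of the descent theory reviewed in the appendix. Everything else is then a routine combination of Theorem~\ref{theorem: a degree zero version} with the standard pullback description of rational $K$-theory.
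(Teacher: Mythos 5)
Your proposal is correct and follows essentially the same route as the paper: reduce via the defining pushout square to the vanishing of $(\LHCn)^G_i(X/\Q)$ for $i\geq 1$, check this on $BG$ using $\LHCn_\bullet(BG/\Q)\cong \HCn_\bullet(BG/\Q)\cong \HCn_0(BG/\Q)[u]$, and propagate by Theorem \ref{theorem: a degree zero version}. The one ingredient you flag as needing verification -- that $\HCn\to\LHCn$ is an equivalence on the smooth quotient $BG$ -- is exactly Observation \ref{observation: cdh sheaffified versions are the same on regular schemes} of the appendix (via $K(BG)\simeq KH(BG)$), so there is no gap.
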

\begin{proof}
From the homotopy fiber square of Construction \ref{construction: cdh sheaffified localizing invariants} defining $\LHCn(-)$ it suffices to prove
\begin{equation*}
(\LHCn)^G_i(X/ \Q) \cong 0, \qquad  \forall i \geq 1   
\end{equation*}
This is the case for $X=\pt$ since $\LHCn_{\bullet}(BG / \Q) \cong HC^{-}_{\bullet}(BG / \Q) \cong HC^{-}_0(BG / \Q)[u]$ vanishes for all $i \geq 1$. We conclude by applying Theorem \ref{theorem: a degree zero version} to the $\Q$-linear truncating invariant $\LHCn(-/ \Q)$.
\end{proof}

\begin{remark}\label{remark: decomposition for toric varieties}
    In \cite[Proposition 5.6]{CHWW09}, a related phenomenon appears for non-equivariant $K$-theory of toric varieties.
\end{remark}

\subsection{Equivariant formality for homotopy invariant \texorpdfstring{$K$}{K}-theory}\label{section: homotopy invariant K-theory and equivariant formality}
We now move towards applications to homotopy invariant algebraic $K$-theory $KH$. We first record a strong equivariant formality statement in Theorem \ref{corollary: KH of simple varieties}. We then discuss two easy consequences: the comparison to its topological counterpart $K_{\top}$ over $\C$ in \S \ref{section: isomorphisms with topological theories in degree zero} and an elementary instance of Parshin vanishing over $\F_q$ in \S \ref{section: examples of the equivariant singular parshin property}.

\begin{setup}
Throughout this subsection, let $k$ be a base field and $G$ a nice group over it. Via Appendix \ref{appendix: actions of linearly reductive G}, the results generalize to any linearly reductive group $G$ over $k$.
\end{setup}

\subsubsection{Equivariant formality for \texorpdfstring{$KH$}{KH}.}\label{section: equivariant formality for KH}
Let us spell out the results from \S \ref{section: degreewise versions} for $KH$. This yields a strong equivariant formality statement for homotopy invariant $K$-theory of simple varieties. 
\begin{theorem}[$KH$ of simple varieties]\label{corollary: KH of simple varieties}
Let $k$ be any base field. For any $X \in \eB^G_k$ we have a natural ring isomorphism
\begin{equation*}
  KH^G_0(X) \underset{K_0(\pt)}{\otimes} K_{\bullet}(\pt) \xrightarrow{\cong}  KH^G_{\bullet}(X). 
\end{equation*}
Furthermore, $KH^G_0(X)$ is a finite projective module over the representation ring $KH^G_0(\pt) \cong R(G)$.
\end{theorem}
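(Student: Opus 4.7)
The plan is to apply Theorem \ref{theorem: computation of truncating invariants} directly with $E = KH(-)$. What is needed is that $KH$ is a $k$-linear finitary truncating invariant and that it is multiplicative; the first two properties are recalled in the statement of the list of truncating invariants, while the symmetric monoidal structure on $\Perf(-)$ and the lax monoidal enhancement of $KH$ give the ring spectrum structure. Once these are verified, the theorem yields both the natural graded ring isomorphism
\[
KH^G_0(X) \underset{KH_0(\pt)}{\otimes} KH_{\bullet}(\pt) \xrightarrow{\cong} KH^G_{\bullet}(X)
\]
and finite projectivity of $KH^G_0(X)$ over $KH^G_0(\pt)$ for every $X \in \eB^G_k$, so the entire content of the theorem is reduced to identifications on the point.

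The remaining step is to rewrite the base-case coefficients in the form stated. First, since $\pt = \Spec k$ is regular, the canonical comparison map $K(\pt) \xrightarrow{\simeq} KH(\pt)$ is an equivalence, so $KH_{\bullet}(\pt) = K_{\bullet}(\pt)$ and in particular $KH_0(\pt) = K_0(\pt) = \Z$; this lets us swap $KH_{\bullet}(\pt)$ for $K_{\bullet}(\pt)$ in the tensor product of the theorem. Next, to identify $KH^G_0(\pt) \cong R(G)$, I invoke the decomposition \eqref{equation: decomposition of BG} from Discussion \ref{discussion: decomposition of BG}, available since $G$ is nice (or linearly reductive). Since $KH$ is finitary, Lemma \ref{lemma: value on equivariant point} gives $KH^G(\pt) \simeq \bigoplus_{\Irr(G)} KH(\pt) \simeq \bigoplus_{\Irr(G)} K(\pt)$; passing to $\pi_0$ produces the representation ring $R(G) = K^G_0(\pt) \cong KH^G_0(\pt)$.

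There is no serious obstacle here, since all the heavy lifting was already done in Theorem \ref{theorem: computation of truncating invariants}; the only subtlety is to keep track of the two coefficient identifications and to check that the statement of the theorem matches the shape being claimed. In particular, one should note that the refined form of the isomorphism mentioned in the remark after Theorem \ref{theorem: computation of truncating invariants}, namely factoring through $KH^G_0(X) \otimes_{KH^G_0(\pt)} KH^G_{\bullet}(\pt)$, combined with the identification $KH^G_{\bullet}(\pt) \cong R(G) \otimes_{\Z} K_{\bullet}(\pt)$ coming from the same decomposition, recovers exactly the stated formula. The finite projectivity of $KH^G_0(X)$ over $R(G) = KH^G_0(\pt)$ is the last assertion of Theorem \ref{theorem: computation of truncating invariants} applied verbatim.
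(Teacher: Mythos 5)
Your proposal is correct and follows essentially the same route as the paper, which deduces the statement directly from Theorem \ref{theorem: computation of truncating invariants} using that $KH(-)$ is a finitary, multiplicative truncating invariant. The additional coefficient identifications you spell out --- $KH_{\bullet}(\pt)\simeq K_{\bullet}(\pt)$ by regularity of $\Spec k$ and $KH^G_0(\pt)\cong R(G)$ via the decomposition \eqref{equation: decomposition of BG} and Lemma \ref{lemma: value on equivariant point} --- are exactly what the paper leaves implicit.
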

\begin{proof}
Follows from Theorem \ref{theorem: computation of truncating invariants} as $KH(-)$ is a finitary truncating invariant which is multiplicative. 
\end{proof}

\begin{remark}[Faithfull flatness]
Concisely, the above theorem shows that $KH^G(X)$ is connective and finite faithfully flat over $K^G(\pt)$.
\end{remark}

\begin{remark}[Change of group and equivariant formality]\label{remark: change of group and equivariant formality}
The equivalence of Theorem \ref{corollary: KH of simple varieties} is compatible with changing the group through any group homomorphism $G' \to G$. In particular, restricting to the trivial group $1 \to G$ specializes to non-equivariant $K$-theory by base change along the augmentation map
\begin{equation*}
    R(G) \to \Z.
\end{equation*}
This is an {\it equivariant formality} statement for $KH$ of varieties $X \in \eB^G_k$. It allows us to compute non-equivariant $KH$ directly from the equivariant one, where more techniques are available.
\end{remark}

\begin{remark}[Arbitrary base]\label{remark: KH arbitrary base}
Let $G$ be a group with decomposable representation theory over an arbitrary base ring $R$ (e.g. over $\Z$). The same arguments show that for any $X_R \in \Sch^G_R$ we have the relative statement
\begin{equation*}
  KH^G_0(X_R) \underset{KH_0(R)}{\otimes} KH_{\bullet}(R) \xrightarrow{\cong}  KH^G_{\bullet}(X_R)
\end{equation*}
functorially in base change along ring maps $R' \to R$. Indeed, the non-equivariant base case of $X_R = \Spec R$ is tautological, while the rest of our arguments goes through without change. 
\end{remark}

For the final remarks, let us focus on the case of a split torus $T = \Gm^n$ over $k$.
\begin{remark}[Freeness]
For any $X \in \eB^T_k$, we deduce that the rationalization $KH^T_0(X; \Q)$ is a finite free module over $\Q[t^{\pm 1}_1, \dots, t^{\pm 1}_n]$.

Indeed, it is a finite projective module over $KH^T_0(\pt; \Q) \cong \Q[t^{\pm 1}_1, \dots, t^{\pm 1}_n]$ by Corollary \ref{corollary: KH of simple varieties}. Any finite projective module over $\Q[t^{\pm 1}_1, \dots, t^{\pm 1}_n]$ is free by \cite{Swa78} or \cite[\S V.4, Corollaries V.4.10 and V.4.11]{Lam06}.
\end{remark}

\begin{remark}[Equivariant localization]\label{remark: equivariant formality of KH is useful for computations}
We expect Theorem \ref{corollary: KH of simple varieties} to be useful in conjunction with equivariant localization from \cite[Theorem C, \S 11]{KR21} for explicit computations of the rings $KH^T_{\bullet}(X)$ where $X \in \Sch^T_k$.

Indeed, given a $T$-equivariant variety $X$, equivariant localization postulates that the natural restriction map between $KH^T(X)$ and $KH^T(X^T)$ becomes an equivalence after localization on the base $K^T_0(\pt)$; the latter object is often easy to describe in terms of the $T$-equivariant geometry. Once equivariant formality holds, $KH^T(X)$ in particular embeds into its localization. The computational question then becomes to describe its image.

For cohomology, the above recipe is well-known and useful. For $KH$ of singular varieties, not much has been done. For example, to the best of our knowledge, there are basically no results on equivariant formality of $KH$ for singular varieties in the literature to start with.
\end{remark}

\subsubsection{Isomorphisms with topological theories in degree zero.}\label{section: isomorphisms with topological theories in degree zero}
If we work over the complex numbers $k = \C$, we can compare to topological $K$-theory. More precisely, we have maps of localizing invariants
\begin{equation*}
K(-) \to KH(-) \to K_{\st}(-) \to K_{\top}(-)
\end{equation*}
of $\C$-linear finitary localizing invariants \cite{Bla16}. Moreover, $KH(-)$, $K_{\st}(-)$, $K_{\top}(-)$ are truncating.

These three latter theories are believed to be reasonably close to each other, but again examples do not come in plenty. They are indeed close for simple varieties: their degree zero parts\footnote{To avoid any potential confusion, we emphasize that in our conventions, $K_{\top}(X)$ matches the topological {\it $K$-cohomology } built up from vector bundles. In particular, $K_{\top, 0}(X)$ matches the zeroeth topological $K$-cohomology group (which is usually denoted with a superscript).} are actually isomorphic (while the rest is determined by their value on the point).
\begin{proposition}[$KH$ and $K_{\top}$ of complex varieties]\label{theorem: isomorphisms with topological theories in degree zero}
Let $X \in \eB^G_{\C}$, then the natural maps induce ring isomorphisms
\begin{equation*}
  KH^G_0(X) \to (K_{\st})^G_0(X) \to (K_{\top})^G_0(X). 
\end{equation*}
\end{proposition}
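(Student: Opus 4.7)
The plan is to apply Theorem \ref{theorem: a degree zero version} to each of the two natural transformations $KH(-) \to K_{\st}(-) \to K_{\top}(-)$ of $\C$-linear truncating invariants, in degree $i = 0$. All three theories are $\C$-linear finitary truncating invariants (as recorded in \S \ref{section: localizing and truncating invariants}), so the hypotheses of the theorem are satisfied once we verify the base case on the equivariant point, namely that the induced maps
\begin{equation*}
  KH^G_0(\pt) \to (K_{\st})^G_0(\pt) \to (K_{\top})^G_0(\pt)
\end{equation*}
are isomorphisms.

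Since each of $KH$, $K_{\st}$, $K_{\top}$ is finitary, Lemma \ref{lemma: value on equivariant point} applied to the decomposition $\Perf(B G) \simeq \bigoplus_{\Irr(G)} \Perf(\C)$ of Discussion \ref{discussion: decomposition of BG} reduces the verification to the non-equivariant point: it suffices to check that
\begin{equation*}
  KH_0(\C) \to K_{\st, 0}(\C) \to K_{\top, 0}(\C)
\end{equation*}
are isomorphisms. Since $\C$ is a regular noetherian ring, $KH_0(\C) \cong K_0(\C) \cong \Z$. Semi-topological and topological $K$-theory of the complex point are also $\Z$ in degree zero (this is a standard elementary input, cf.\ \cite{Bla16}), and the maps between these copies of $\Z$ send the class of the trivial line bundle to the class of the trivial line bundle, hence are identities.

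Invoking Theorem \ref{theorem: a degree zero version} with $i = 0$ then gives isomorphisms of abelian groups $KH^G_0(X) \cong (K_{\st})^G_0(X) \cong (K_{\top})^G_0(X)$ for every $X \in \eB^G_{\C}$. Finally, since the natural transformations $KH \to K_{\st} \to K_{\top}$ are lax symmetric monoidal, the induced maps on $\pi_0$ are ring homomorphisms, so they are in fact ring isomorphisms. The only mild subtlety in the argument is the identification $K_{\st, 0}(\C) \cong \Z$, but this is part of the standard package from \cite{Bla16}; no harder input is needed.
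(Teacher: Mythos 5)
Your proposal is correct and is essentially the paper's own argument: the paper likewise checks that all three theories take the value $\Z$ in degree zero on the non-equivariant point and then concludes by Theorem \ref{theorem: a degree zero version} together with Lemma \ref{lemma: value on equivariant point} (using that $KH$, $K_{\st}$, $K_{\top}$ are finitary truncating invariants). Your extra remarks on the decomposition over $\Irr(G)$ and on lax monoidality giving the ring statement are just spelled-out versions of the same steps.
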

\begin{proof}
This is clearly the case for the non-equivariant point $\pt = \Spec \C$, where all three terms take the value $\Z$. We conclude by Theorem \ref{theorem: a degree zero version} and Lemma \ref{lemma: value on equivariant point}.   
\end{proof}

\begin{remark}[Equivariant formality]
Note that Theorem \ref{theorem: computation of truncating invariants} applies equally well to the truncating finitary multiplicative invariants $K_{\st}(-)$, $K_{\top}(-)$ as to $KH(-)$ in \S \ref{section: homotopy invariant K-theory and equivariant formality}. All of them are equivariantly formal for any $X \in \eB^G_{\C}$, compatibly with the comparison maps. Similarly for the non-finitary truncating invariant $HP(-/\C)$.
\end{remark}

\begin{remark}[Topological and arithmetic parts of $KH$]
Paired with the equivariant formality from previous remark, Proposition \ref{theorem: isomorphisms with topological theories in degree zero} allows to compute $KH^G_{\bullet}(X)$ by tensoring two independent contributions: the topological contribution of $K^G_{\top, 0}(X)$ and the arithmetic contribution of $K_{\bullet}(\C)$.
\end{remark}

To free the careful reader of any potential doubts, we would like to emphasize that $K^G_{\top}(-)$ is the classical invariant one hopes for.
\begin{remark}[Blanc vs. Segal]
The equivariant topological $K$-theory $K^G_{\top}$ agrees with the classical construction \cite{Seg68} on any $X \in \Sch^G_{\C}$. 

Denote $M \subseteq G(\C)$ the maximal compact subgroup of the topological group of complex points of $G$ with the analytic topology. Then $M$ is a compact real Lie group and it acts on the topological space $X(\C)$ of complex points of $X$ with the analytic topology. In this situation, \cite{Seg68} considered the equivariant topological $K$-theory spectrum
\begin{equation*}
    K^M_{\top}(X(\C)).
\end{equation*}
For $X$ in $\Sch^G_{\C}$, we have a natural comparison map
\begin{equation*}
    K^G_{\top}(X) \xrightarrow{} K^M_{\top}(X(\C)).
\end{equation*}
constructed in \cite[\S 2.1.3]{HLP20}. 

Now, both sides have proper excision: for $K^G_{\top}$ this holds because it is a truncating invariant via \cite{ES21, LS25}, while for $K^M_{\top}$ this follows from the long exact sequence of a pair. By equivariant resolution of singularities in characteristic zero (see \cite[\S 6.1]{EKS25} for a discussion), it is thus enough to check that the comparison map is an equivalence on smooth $G$-equivariant schemes. This has been done in \cite{HLP20}, \cite{Bla16}. 
\end{remark}

For the sake of completeness, let us mention the following fact: the topological realization of the trace map is always an isomorphism.
\begin{remark}[Topological Chern character]
On topological $K$-theory, the trace map induces the topological Chern character
\begin{equation*}
    K_{\top}(-) \otimes \C \to HP(-/\C).
\end{equation*}
For any $X \in \Sch^G_{\C}$ we have an equivalence
\begin{equation*}
K^G_{\top}(X) \otimes \C \xrightarrow{\simeq} HP^G(X/\C).    
\end{equation*}
For smooth schemes, this is due to \cite{Kon21}. It was extended to smooth quotient stacks by \cite{HLP20}. Also see \cite[Theorem 5.3.2]{ES21}. For possibly singular stacks with nice stabilizers, this follows by cdh descend; see \cite{Kha23}. This works for quotients of quasi-projective varieties by linearly reductive $G$ via Appendix \ref{appendix: actions of linearly reductive G}.
\end{remark}

\subsubsection{Examples of the equivariant singular Parshin property.}\label{section: examples of the equivariant singular parshin property}
We also mention the following silly consequence of the formality of $KH$ from Theorem \ref{corollary: KH of simple varieties}.
If $k = \F_q$ is a finite field of characteristic $p$ with $q$ elements, each $X \in \eB_{\F_q}$ satisfies Parshin's property:
\begin{equation*}
K_i(X; \Q) \cong 0, \qquad \forall i \geq 1.   
\end{equation*}
This property is usually conjectured for smooth projective varieties over $\F_q$ and such claims are open in general. It is certainly expected for singular projective varieties over $\F_q$ -- the statement is compatible with cdh descent, so the singular case would follow from the smooth case if one had resolution of singularities for projective varieties over $\F_q$.

In fact, this argument works even equivariantly.
\begin{observation}\label{observation: stability of the Parshin property}
Let $G$ be a nice group over $\F_q$ and $(X, Y, Z, E)$ be an abstract blowup square in $\Sch^G_{\F_q}$. If the Parshin's property 
\begin{equation*}\label{equation: equivariant parshin conjecture}
   K^G_i(-; \Q) \cong 0, \qquad \forall i \geq 0
\end{equation*}
holds for $Y, Z, E$, then it holds for $X$ as well.
\end{observation}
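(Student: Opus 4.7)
The plan is to exhibit $K(-;\Q)$ as a cdh-local invariant on $\Sch^G_{\F_q}$, reducing the statement to a long exact sequence chase from the abstract blowup square.

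The key input is that, on $\F_q$-schemes, the rationalized Hochschild homology relative to the integers vanishes: for any $\F_q$-linear stable $\infty$-category $\eA$, one has $\eA \otimes^{\mathbf{L}}_{\Z} \Q \simeq 0$ (since $p$ is simultaneously a unit in $\Q$ and zero in $\eA$), hence $HH(\eA/\Z;\Q) \simeq 0$ and consequently $HC^{-}(\eA/\Z;\Q) \simeq 0$. By the $\Z$-linear Cortiñas--Land--Tamme theorem \cite{LT19}, the infinitesimal $K$-theory $K^{\inf}(-;\Q) := \fib\bigl(K(-;\Q) \to HC^{-}(-/\Z;\Q)\bigr)$ is truncating, in particular cdh-local. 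Combined with the vanishing above, this degeneration gives $K(-;\Q) \simeq K^{\inf}(-;\Q)$ on $\Sch^G_{\F_q}$, so $K(-;\Q)$ is itself truncating (and thus cdh-local) on this category. Alternatively, one can run the same argument with the target replaced by $TC(-;\Q)$, using Land--Tamme's integral form and the known cdh descent for $TC$ in characteristic $p$ via \cite{CMM21}.

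Once cdh descent for $K(-;\Q)$ is in place, the abstract blowup square $(X, Y, Z, E)$ produces the long exact sequence on rationalized equivariant $K$-groups (equivariantly via \cite[Corollary 5.2.6]{ES21}, conditionally \cite{LS25})
\begin{equation*}
\cdots \to K^G_{i+1}(E;\Q) \to K^G_i(X;\Q) \to K^G_i(Y;\Q) \oplus K^G_i(Z;\Q) \to K^G_i(E;\Q) \to \cdots
\end{equation*}
For every $i \geq 1$, Parshin's property on $Y, Z$ gives $K^G_i(Y;\Q) = K^G_i(Z;\Q) = 0$, and Parshin's property on $E$ (applied in degree $i+1 \geq 2$) gives $K^G_{i+1}(E;\Q) = 0$. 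The long exact sequence thus forces $K^G_i(X;\Q) = 0$, which is the desired vanishing.

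The main obstacle is the first step, namely pinning down a rigorous integral version of Cortiñas--Land--Tamme on characteristic-$p$ stacks together with the vanishing input, or equivalently establishing equivariant cdh descent for $TC(-;\Q)$ on $\Sch^G_{\F_q}$; once that formal setup is accepted, the propagation of Parshin's property through the long exact sequence is a direct exercise.
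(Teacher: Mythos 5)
Your long-exact-sequence chase at the end is fine, but the first step — establishing cdh descent for $K(-;\Q)$ on $\Sch^G_{\F_q}$ — contains a genuine error. From $\eA \otimes_{\Z}\Q \simeq 0$ you may indeed conclude $HH(\eA/\Z;\Q)\simeq 0$ (Hochschild homology is a colimit, so it commutes with rationalization), but the further step ``consequently $HC^{-}(\eA/\Z;\Q)\simeq 0$'' is false: $HC^{-}=HH^{hS^1}$ is a limit, and rationalization does not commute with it. Concretely, $HH(\F_p/\Z)$ is bounded below with homotopy groups killed by $p$, hence $p$-complete, so $HC^{-}(\F_p/\Z)$ is $p$-complete as well; its homotopy contains copies of $\Z_p$ (its periodic version computes crystalline cohomology of the point), so $HC^{-}(\F_p/\Z;\Q)$ is a nonzero $\Q_p$-vector space in infinitely many degrees. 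Thus $K(-;\Q)\not\simeq K^{\inf}(-;\Q)$ with your definition, and the claimed truncating/cdh-local structure on rational $K$-theory does not follow. The fallback via $TC$ has the same problem: $TC(\F_p)\simeq \Z_p\oplus \Z_p[-1]$, so $TC(-;\Q)$ does not vanish on $\F_q$-schemes, it is not truncating, and \cite{CMM21} gives rigidity statements with finite/$p$-adic coefficients, not cdh descent for rational $TC$; so this route would require exactly the descent statement you are trying to prove.

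The paper's proof avoids all of this by quoting the known equivalence $K^G(-;\Q)\simeq KH^G(-;\Q)$ on $\Sch^G_{\F_q}$ (equivariantly by the cited results of \cite{KR18} and \cite{Hoy21}; non-equivariantly this goes back to Weibel, resting on the fact that the relevant nil-$K$-groups in characteristic $p$ are $p$-torsion). Since $KH$ is truncating, hence sends equivariant abstract blowup squares to fiber squares, the long exact sequence you wrote is available for $KH^G(-;\Q)=K^G(-;\Q)$ and the vanishing propagates. If you want to salvage your approach, replace the faulty vanishing claim by an input of this type; also note that the statement asserts vanishing for all $i\geq 0$, so your chase should include the degree $i=0$ case (which works by the same argument, using $K^G_1(E;\Q)=0$).
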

\begin{proof}
Since $K^G(X; \Q) \simeq KH^G(X; \Q)$ by \cite[Theorem 1.3.(2)]{KR18} and \cite[Theorem 1.3.(4)]{Hoy21}, it satisfies cdh descent, so we conclude from the associated long exact sequence.    
\end{proof}

However, not many instances of this method are available in practice. The equivariant formality of $KH$ from Theorem \ref{corollary: KH of simple varieties} now easily implies that varieties from $\eB^G_k$ qualify. 

\begin{example}\label{example: parshins property for simple varieties}
Let $G$ be a nice group over $\F_p$. For any $X \in \eB^G_{\F_q}$, we have
\begin{equation*}
   K^G_i(X; \Q) \cong 0, \qquad \forall i \neq 0.
\end{equation*}
In particular, $X$ satisfies Parshin's property.
\end{example}
\begin{proof}
To this end, compute
\begin{align*}
K^G_{\bullet}(X; \Q) 
\cong KH^G_{\bullet}(X; \Q)
\cong KH^G_0(X) \underset{K_0(\pt)}{\otimes} K_{\bullet}(\pt) \underset{\Z}{\otimes} \Q
\cong KH^G_0(X; \Q).
\end{align*} 
Here, the first isomorphism holds by \cite[Theorem 1.3.(2)]{KR18} and \cite[Theorem 1.3.(4)]{Hoy21} already with $\Z[\tfrac{1}{p}]$-coefficients; in the non-equivariant case this goes back to \cite{Wei89, TT90}. The second isomorphism comes from Theorem \ref{corollary: KH of simple varieties} and the final isomorphism from the knowledge that $K_\bullet(\pt; \Q)$ is concentrated in degree zero with value $\Q$ by Quillen's work \cite[Corollary IV.1.13]{Weib13}.
\end{proof}

On the other hand, we can illustrate Observation \ref{observation: stability of the Parshin property} on the following example, which is not directly covered by the class $\eB^G_{\F_q}$ (although see \S \ref{section: outlook}). Recall \cite{VV03} for a quick overwiew of split toric varieties over $k$. 
\begin{example}\label{example: parshin property for toric varieties}
Split toric varieties over $\F_p$ satisfy the equivariant Parshin's property.
\end{example}
\begin{proof}
Note that this is the case for the point; it then holds for smooth split toric varieties by \cite[Theorem 6.2]{VV03}. Since the statement is compatible with enlarging $T$ along precomposition with a group homomorphism by Remark \ref{remark: classes of simple varieties for different groups}, the case of singular split toric varieties reduces to the smooth case via toric resolutions and cdh descent from Observation \ref{observation: stability of the Parshin property}.
\end{proof}

\begin{remark}\label{remark: broken toric varieties}
The Parshin's property also holds for the so called \textit{broken toric varieties}, defined by closed gluing of toric varieties along toric subvarieties. Indeed, the statement is clear from Example \ref{example: parshin property for toric varieties} via Observation \ref{observation: stability of the Parshin property} -- see Example \ref{example: closed gluing} below for the relevant abstract blowups. Broken toric varieties (with smooth irreducible components) naturally appear in the mirror symmetry literature, see \cite{Sun24} and references there.
\end{remark}

\section{Examples of simple varieties}\label{section: examples of simple varieties}
We now pay our debt to the reader by providing examples of varieties for which the above theory applies. Although such varieties are rather special, we recover examples of interest in geometric representation theory. After some basic examples in \S \ref{section: some basic examples}, we discuss both finite and affine Schubert varieties \S \ref{section: classical scubert varieties}, \S \ref{section: affine schubert varieties} for $\GL_n$. The singularities of such varieties are of deep interest in geometric representation theory.

\subsection{Some basic examples}\label{section: some basic examples}
We start with a few elementary examples. Let $k$ be a base field and $G$ a linearly reductive group over it.

\begin{example}[Partial flag varieties]\label{example: partial flag varieties}
Projective spaces $\P^n$, Grassmannians $\mathrm{Gr}(n, d)$ and all partial flag varieties $\Flag(n, d_{\bullet})$ for $GL_n$ lie in $\eB_k$ and hence in $\eC_k$ by \eqref{closure property: the point} and \eqref{closure property: projective bundles}. Picking any $G$-action on the underlying vector space $V$, these $G$-equivariant varieties lie in $\eB^G_k$ and hence in $\eC^G_k$.     
\end{example}

\begin{example}[Hirzerbruch surfaces]
Let $m, d \geq 1$. Consider a direct sum of line bundles $\eE = \O(i_1) \oplus \dots \O(i_{m})$ on $\P^d_k$ and let $X := \P_{\P^d}(\eE)$ be its projectivization. All of this further carries an obvious action of the torus $T = \Gm^{d+1}$.
Then $X \in \eB^T_k$ by \eqref{closure property: the point} and \eqref{closure property: projective bundles}.

It is known that this construction yields all smooth projective toric varieties of Picard rank $2$. 
For example, specializing to $\eE = \O \oplus \O(-n)$ on $\P^1$ yields Hirzerbruch surfaces $\Sigma_n$.
\end{example}

\begin{example}[Cusp]\label{example: projective cuspidal curve}
Let $X$ be the projective cuspidal curve over $k$. Then $X \in \eC_k$. Indeed, this follows from the abstract blowup square    
\begin{equation*}
\begin{tikzcd}
        \P^1 \arrow[d] \arrow[r, hookleftarrow] &  \arrow[d] \Spec ( k[\varepsilon]/(\varepsilon^2)) \\
        X \arrow[r, hookleftarrow] & \pt
\end{tikzcd}
\end{equation*}
whose other terms lie in $\eC_k$ by \eqref{closure property: the point}, \eqref{closure property: projective bundles} and Remark \ref{remark: independence on nilpotent structures}. We conclude via \eqref{closure property: 3-out-of-4 for abstract blowups}.
\end{example}

\begin{example}[Node]\label{example: projective nodal curve}
Let $X$ be the projective nodal curve over $k$. Then $X \in \eC_k$, but not in $\eB_k$. Indeed, there is an abstract blowup square
\begin{equation*}
\begin{tikzcd}
        \P^1 \arrow[d] \arrow[r, hookleftarrow] &  \arrow[d] \pt \sqcup \pt \\
        X \arrow[r, hookleftarrow] & \pt
\end{tikzcd}
\end{equation*}
whose other terms lie in $\eC_k$ by \eqref{closure property: the point} and \eqref{closure property: projective bundles}.
Hence also $X$ lies in $\eC_k$ by Definition \ref{definition: simple varieties C}.  

On the other hand, if $X$ was in $\eB_k$, then we would also have $KH_{-1}(X) \cong 0$ by Theorem \ref{corollary: KH of simple varieties}. However, the long exact sequence associated with the abstract blowup square above ends with
\begin{equation*}
    \begin{tikzcd}
      \arrow[r] & KH_0(\P^1) \oplus KH_0(\pt) \arrow[r, "\rank"] \arrow[d, equal] & KH_0(\pt \sqcup \pt) \arrow[r] \arrow[d, equal] &  KH_{-1}(C) \arrow[r] \arrow[d, equal] & 0  \\
      \arrow[r] & \Z^{\oplus 3} \arrow[r, "(a{,} b{,} c) \mapsto (a+b+c{,} a+b+c)"] & \Z^{\oplus 2} \arrow[r] &  KH_{-1}(C) \arrow[r] & 0 
    \end{tikzcd}
\end{equation*}
showing $KH_{-1}(X) \cong \Z$. Altogether, $X \notin \eB_k$.
\end{example}

\begin{example}[Closed gluing]\label{example: closed gluing}
If $X$ is covered by two closed subvarieties $Z_1$ and $Z_2$ with (reduced) intersection $Z_3 := Z_1 \cap Z_2$ such that $Z_1, Z_2, Z_3 \in \eC_k$, then also $X \in \eC_k$. Similarly for the equivariant case. Indeed, this immediately follows from the associated abstract blowup square featuring $(X, Z_1 \sqcup Z_2, Z_3, Z_3 \sqcup Z_3)$ and from Definition \ref{definition: simple varieties C}.
\end{example}

\begin{example}[Closure on projective cones]\label{example: closure on projective cones}
Let $k$ be any base field -- or even any base ring -- and $E$ a projective scheme over $k$. Assume that $E \in \eB_k$ (resp. $\eC_k$). Let $i: E \hookrightarrow \P^n_k$ be any projective embedding, $\eL = i^* \O(1)$ the corresponding line bundle on $E$, and $X$ the associated projective cone. Then $X \in \eB_k$ (resp. $\eC_k$) as well. Similarly for the equivariant case. 

Indeed, blowing up $X$ in the cone point, we obtain an abstract blowup
\begin{equation*}
\begin{tikzcd}
        Y \arrow[bend left=30, r, dashed] \arrow[d] \arrow[r, hookleftarrow, swap, "0"] &  \arrow[d]  E \\
        X \arrow[r, hookleftarrow] & \pt
\end{tikzcd}
\end{equation*}
where $Y = \P_E(\O \oplus \eL)$ and the upper horizontal map is the inclusion of the zero section, which is split by the structure map $\P_E(\O \oplus \eL) \to E$. 

The above statement is covered by \cite[\S 8]{EGAII}: under the notation from \cite[(8.3.1) and (8.3.2)]{EGAII}, the relevant commutative square is \cite[(8.7.1)]{EGAII}. It is indeed a pullback square by \cite[(8.7.8)]{EGAII}; the description of $Y$ and the horizontal inclusion follows from \cite[(8.7.8) and (8.4.2)]{EGAII}. Alternatively see \cite[(8.8.2), (8.8.3), (8.8.4)]{EGAII} for discussion of the relevant affine chart away from infinity.

Altogether, we have $E \in \eB_k$ by assumption, hence $Y = \P_E(\O \oplus \eL) \in \eB_k$ by \eqref{closure property: projective bundles}. Also $\pt \in \eB_k$ by \eqref{closure property: the point}. Since the abstract blowup square is split, we deduce that $X \in \eB_k$ by \eqref{closure property: 3-out-of-4 for split abstract blowups}. The discussion for the class $\eC_k$ is similar; the equivariant versions hold by naturality of the arguments.
\end{example}

\begin{example}[A projective cone of $\P^1$]\label{example: the projective cone of projective line}
Let $k = \Q$. To illustrate that the decomposition from Proposition \ref{proposition: direct sum in positive degrees} can be nontrivial, we recall the example of the projective cone $X$ of $\P^1$ under the embedding by $\O(2)$, heavily based on the results of \cite{CHWW13} for the affine cone. Note that $X$ accidentally matches the affine Schubert variety $X_{\leq 2\omega_1} \hookrightarrow \Gr_{\GL_2}$ by \cite[Lemma 2.1.14]{Zhu15}. Also compare to \cite{PS21}.

\noindent We claim that
\begin{align*}
K_i(X) =
\begin{cases}
KH_i(X) \oplus \HCn_i(X/ \Q) & \text{if } i \geq 1 \\
KH_0(X) & \text{if } i=0 \\
0 & \text{if } i \leq -1
\end{cases}
\end{align*}
where
\begin{equation*}
KH_i(X) \cong KH_i(\Q)^{\oplus 3} \ \ \ \text{if } i \geq 0
\qquad \text{and} \qquad
\HCn_i(X / \Q) \cong 
\begin{cases}
0 & \text{if } i \geq 1 \text{ even,} \\
\Q & \text{if } i \geq 1 \text{ odd.}
\end{cases}
\end{equation*}

To get this, write $Y = \P_{\P^1}(\O \oplus \O(2))$ for the resolution of $X$. This matches the affine Demazure resolution $Y_{\leq (\omega_1, \omega_1)}$, but we do not need this fact. We stratify $X$ into the affine cone $X_1$ and the complement of the singular point $X_2$, and write $X_3 = X_1 \cap X_2$ for their intersection. Pulling back this stratification to $Y$, we get the diagram
\begin{equation*}
    \begin{tikzcd}
        Y \arrow[d, "f", swap] & \arrow[l] Y_1 \sqcup Y_2 \arrow[d, "f_1 \sqcup f_2", swap] & \arrow[l, shift left=0.75ex] \arrow[l, shift right=0.75ex, swap] \arrow[d, "f_3", swap] Y_3 \\
        X & \arrow[l] X_1 \sqcup X_2 & \arrow[l, shift left=0.75ex] \arrow[l, shift right=0.75ex, swap] X_3 
    \end{tikzcd}
\end{equation*}
where rows are Zariski covers. The maps $Y_2 \to X_2$ and $Y_3 \to X_3$ are isomorphisms. We now apply $K(-)$ to this diagram; rows will then induce long exact sequences by Zariski descent. 
\begin{equation*}
    \begin{tikzcd}
        K(Y) \arrow[r] & K(Y_1) \oplus K(Y_2) \arrow[r] & K(Y_3) \\
        K(X) \arrow[u, "f^*"] \arrow[r] & K(X_1) \oplus K(X_2) \arrow[u, "f^*_1 \oplus \id"] \arrow[r] & K(X_3) \arrow[u, "\id"]
    \end{tikzcd}
\end{equation*}
The upper row consists of smooth varieties, and we have $K(Y) = K(\Q)^{\oplus 4}$ by repeated use of the projective bundle formula. From the split abstract blowup square associated to the resolution $f: Y \to X$, we deduce $KH(X) = K(\Q)^{\oplus 3}$ is a direct summand in $KH(Y) = K(Y)$. Since $X_2$ and $X_3$ are smooth, the remaining difference between $K(X)$ and $K(Y)$ comes from $K(X_1)$, which is described by \cite[Theorem 4.3 and Remark 4.3.1]{CHWW13} -- it gives $\Q$ in odd positive degrees $2j +1 \geq 1$ and zero else. However, each of these $\Q$ maps to $0$ under $f^*_1: K_{2j+1}(X_1) \to K_{2j+1}(Y_1)$ -- indeed, $\Q$ is uniquely divisible, while $K_{2j+1}(Y_1) \cong K_{2j+1}(\Q)^{\oplus 2}$ is torsion for $2j+1 \geq 3$ by \cite{Weib13} and $(\Q^{\times})^{\oplus 2}$ for $2j+1 = 1$ (alternatively one can keep track of the weights). From the commutativity the right-hand square and the fact that the right vertical map is the identity, we now see that the whole $K_{2j+1}(X_1) \cong \Q$ maps to zero under $K_{2j+1}(X_1) \to K_{2j+1}(X_3)$ -- hence it contributes to $K_{2j+1}(X)$ by long exact sequence induced by the bottom row. We have thus matched $K_i(X) = KH_i(X) = KH(\Q)^{\oplus 3}$ when $i$ is not odd positive; there is an extra $\Q$ appearing if $i$ is odd positive.

On the other hand, we already know that $K_i(X) \cong KH_i(X) \oplus \HCn_i(X / \Q)$ for all $i \geq 1$ by Proposition \ref{proposition: direct sum in positive degrees}. Altogether, we deduce the result.
\end{example}

\begin{remark}
Consider an affine Demazure resolution $f: Y \to X$ over $k$. Since $X$ has rational singularities, the pullback $f^*:\Perf(X) \to \Perf(Y)$ is a fully faithful embedding. One could wishfully ask whether $f^*$ makes $\Perf(X)$ into a semi-orthogonal summand of $\Perf(Y)$. This is not the case.

In fact, it fails by the following general principle. We work over a field $k$. Under mild assumptions, regularity of a scheme $Z$ is equivalent to the existence of a \textit{strong generator} of $\Perf(Z)$ via \cite[Theorem 3.27]{Orl16}. Furthermore, strong generators are inherited by semi-orthogonal summands \cite[Proposition 3.20]{Orl16}. Put together, if $\Perf(X)$ is a semi-orthogonal summand of $\Perf(Y)$ for a regular scheme $Y$, then $X$ is regular as well.

In the case of affine Schubert varieties, the above failure can be seen already on the level of localizing invariants. Consider the map $f: Y \to X$ over $\Q$ from Example \ref{example: the projective cone of projective line}, which accidentally matches the affine Demazure resolution $f: Y_{\leq (\omega_1, \omega_1)} \to X_{\leq 2\omega_1}$ of the affine Schubert variety $X_{\leq 2\omega_1} \hookrightarrow \Gr_{\GL_2}$ by \cite[Lemma 2.1.14]{Zhu15}. 
If $\Perf(X)$ were a semi-orthogonal summand of $\Perf(Y)$, then the natural map $E_{\bullet}(X) \to E_{\bullet}(Y)$ would be injective for any localizing invariant $E$. This is wrong for negative cyclic homology $HC^{-}(-/\Q)$. By the case of a point and projective bundle formula, $HC^{-}(Y/\Q)$ is supported in non-positive degrees, but $HC^{-}(X/\Q)$ is nonzero in each odd positive degree (it is a copy of $\Q$) by Example \ref{example: the projective cone of projective line}.    
\end{remark}

\subsection{Finite Schubert varieties}\label{section: classical scubert varieties}
We prove that finite Schubert varieties in usual Grassmannians are simple varieties in Lemma \ref{lemma: finite schubert varieties are simple} -- they lie in the class $\eB^B_k$, and in particular in $\eB^T_k$.
Already here, our computations seem to be new. 

We start by quickly recalling the definitions; see \cite{Bri04} or \cite[\S 2.2, \S 2.3]{Oet21} for these standard facts. 
For the sake of readability, we use the convention dual to Notation \ref{notation: flag bundles}: given $X \in \Sch_k$, $\eF \in \Qcoh(X)$ and a dimension vector $d_{\bullet} = (d_1, \dots, d_m)$, we denote $\Flag^*_X(\eF, d_{\bullet})$ the scheme parametrizing flags of {\it subbundles} -- as opposed to quotient bundles -- with successive subquotients of ranks $d_1, \dots, d_m$. In particular, $\Grass^*(\eF, d)$ parametrizes rank $d$ locally free {\it subbundles}, compatibly with the above references.

\begin{setup}\label{setup: finite schubert varieties}
Let $k$ be a base field and $U$ a $k$-vector space of dimension $n$. Let $F_{\bullet}U = (0 = F_0U \subseteq F_1U \subseteq F_2U \subseteq \dots \subseteq F_{n}U = U)$ be a fixed full reference flag in $U$. We denote by $B \leq GL_n$ the Borel subgroup stabilizing $F_{\bullet}U$ and by $T = \Gm^{n}$ its maximal torus. We choose an embedding $T \leq B$.
\end{setup}

\begin{recollection}[Finite Schubert varieties]
Fix a sequence of non-negative integers $j_{\bullet} = (0=j_0 \leq j_1 \leq \dots \leq j_n=d)$ with $j_i \leq i$ for each $i = 0, \dots, n$. Denote $d_{\bullet} = (d_1, \dots, d_n)$ the corresponding difference sequence with $d_i := j_i - j_{i-1}$ for each $i = 1, \dots, n$. The associated {\it finite Schubert variety} $X_{\leq j_{\bullet}}$ is the reduced closed subvariety of $\Grass^*_k(U,d)$ classifying
\begin{equation*}
   X_{\leq j_{\bullet}} = \{ V \subseteq U \mid \dim_k V = d, \ \forall i: \ \dim_k(V \cap F_iU) \geq j_i  \}.
\end{equation*}
It is a projective variety with a natural action of $T \leq B$. Set-theoretically, it is a closed union of $B$-orbits in $\Grass^*_k(U,d)$. It is often singular; the smooth cases are determined by explicit combinatorial criteria.
\end{recollection}

\begin{lemma}[Finite Schubert varieties are simple]\label{lemma: finite schubert varieties are simple}
For any $d$ and $j_{\bullet}$ as above, we have $X_{\leq j_{\bullet}} \in \eB^B_k$. In particular, $X_{\leq j_{\bullet}} \in \eB^T_k$.
\end{lemma}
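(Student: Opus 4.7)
The plan is to combine a Zelevinsky-type resolution of $X$, built as an iterated Grassmannian bundle over the point, with a descending application of the stratified projective bundle closure property~\eqref{closure property: stratified projective bundles}.

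First, I would introduce the auxiliary scheme
\begin{equation*}
    \tilde X = \bigl\{(V_0 \subseteq V_1 \subseteq \cdots \subseteq V_n) : V_i \subseteq F_iU, \ \dim V_i = j_i\bigr\}
\end{equation*}
together with its natural proper $T$-equivariant map $\pi : \tilde X \to X$ given by $(V_\bullet) \mapsto V_n$. This $\tilde X$ can be built as an iterated Grassmannian bundle from the point: setting $\tilde X_0 = \pt$ and $\tilde X_i = \Grass^*_{\tilde X_{i-1}}(F_iU/V_{i-1}, d_i)$, the needed rank condition $i - j_{i-1} \geq d_i$ reduces to the standing hypothesis $j_i \leq i$. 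Iterated use of closure properties~\eqref{closure property: the point} and~\eqref{closure property: projective bundles} then yields $\tilde X = \tilde X_n \in \eB^T_k$.

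Next, I would set up the descending tower $\tilde X = X^{(n)} \to X^{(n-1)} \to \cdots \to X^{(0)} = X$ of intermediate varieties, where $X^{(i)}$ parametrizes partial data $(V, V_1, \ldots, V_i)$ with $V \in X$, $V_k \subseteq V \cap F_kU$, and $\dim V_k = j_k$ for $k \leq i$. The map $X^{(i)} \to X^{(i-1)}$ forgets $V_i$, and its fiber over $(V, V_1, \ldots, V_{i-1})$ is the Grassmannian of $d_i$-dimensional subspaces of the coherent sheaf $\mathcal W_i$ on $X^{(i-1)}$ given by the kernel of the equivariant map of vector bundles
\begin{equation*}
    \bar\phi_i : \mathcal V/V_{i-1} \to (U/F_iU) \otimes \O,
\end{equation*}
where $\mathcal V$ is the tautological rank-$d$ subbundle. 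A fiberwise computation gives $\rank \mathcal W_i = \dim(V \cap F_iU) - j_{i-1} \geq d_i$ by the Schubert condition.

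The step I expect to be the main obstacle is applying closure property~\eqref{closure property: stratified projective bundles} along this tower, since $\mathcal W_i$ is naturally presented as a kernel rather than a cokernel of vector bundles. I would resolve this via duality: set
\begin{equation*}
    \eF_i := \coker\!\bigl(\bar\phi_i^{\vee} : (U/F_iU)^{\vee} \otimes \O \to (\mathcal V/V_{i-1})^{\vee}\bigr),
\end{equation*}
a cokernel of equivariant vector bundles. A rank-$d_i$ quotient $(\mathcal V/V_{i-1})^{\vee} \twoheadrightarrow \mathcal Q$ of $\eF_i$ dualizes to a rank-$d_i$ subbundle $\mathcal Q^{\vee} \hookrightarrow \mathcal V/V_{i-1}$ contained in $\ker \bar\phi_i = \mathcal W_i$, which identifies $X^{(i)}$ with $\Flag_{X^{(i-1)}}(\eF_i, (d_i))$. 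The rank of $\eF_i$ equals $\dim \ker \bar\phi_i \geq d_i$ by the same computation, so closure property~\eqref{closure property: stratified projective bundles} gives $X^{(i)} \in \eB^T_k \implies X^{(i-1)} \in \eB^T_k$. Iterating this descent from $\tilde X = X^{(n)}$ down to $X^{(0)} = X$ then finishes the proof.
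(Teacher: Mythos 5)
Your proof is correct and follows essentially the same route as the paper: build the Bott--Samelson/Zelevinsky resolution $\tilde X$ as an iterated honest Grassmannian bundle over the point via \eqref{closure property: the point} and \eqref{closure property: projective bundles}, then descend along the forgetful tower $\tilde X = X^{(n)} \to \cdots \to X^{(0)} = X$ by iterated use of the stratified bundle property \eqref{closure property: stratified projective bundles}, with the rank condition supplied by the Schubert inequalities. Your extra step of presenting the relevant sheaf as a cokernel via dualizing $\bar\phi_i$ is a valid (and welcome) way of matching the literal hypothesis of \eqref{closure property: stratified projective bundles}, a point the paper's proof passes over silently.
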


\begin{proof}
Consider the Bott--Samelson resolution $Y_{\leq j_{\bullet}}$, which classifies
\begin{equation*}
    Y_{\leq j_{\bullet}} = 
    \{ (0 = V_0 \subseteq V_1 \subseteq V_2 \subseteq \dots \subseteq V_{n}) \mid V_i \subseteq F_iU, \ \forall i: \ \dim_k V_i = j_i  \}
\end{equation*}
The map $f: Y_{\leq j_{\bullet}} \to X_{\leq j_{\bullet}}$ given by $(V_0 \subseteq V_1 \subseteq V_2 \subseteq \dots \subseteq V_{n}) \mapsto V_n$ is proper birational and $B$-equivariant.

On one hand, $Y_{\leq j_{\bullet}}$ is a tower of honest Grassmannian bundles over $\pt$, defined inductively as follows: in the $i$-th step, we consider the Grassmannian classifying $d_i$-dimensional subspaces in $F_iU$ modulo the tautological bundle with fiber $V_{i-1}$ from the previous step. 

On the other hand, the map $f: Y_{\leq j_{\bullet}} \to X_{\leq j_{\bullet}}$ can be constructed as a tower of stratified Grassmannian bundles, defined inductively as follows. Denote $\eV$ the tautological bundle on $X_{\leq j_{\bullet}}$ with fiber $V$. In the $i$-th step, we consider the Grassmannian classifing $d_i$-dimensional subspaces in the pullback of $\eV \cap F_{\bullet }U_i$ modulo the tautological subbundle from the previous step (also note that by definition of $X_{\leq j_{\bullet}}$ and $\eV$, this coherent sheaf has rank $\geq d_i$ everywhere).

Altogether, we get a $B$-equivariant diagram
\begin{equation*}
    X_{\leq j_{\bullet}} \xleftarrow{f} Y_{\leq j_{\bullet}} \xrightarrow{g} \pt
\end{equation*}
where $f$ is an iterated stratified Grassmannian bundle with nonempty fibers and $g$ is an honest iterated Grassmannian bundle. Since $\pt \in \eB^B_k$ by \eqref{closure property: the point}, also $Y_{\leq j_{\bullet}} \in \eB^B_k$ by iterative use of \eqref{closure property: projective bundles}. Hence $X_{\leq j_{\bullet}} \in \eB^B_k$ by iterative use of \eqref{closure property: stratified projective bundles}, the relevant rank condition being satisfied by the above. In particular, $X_{\leq j_{\bullet}} \in \eB^T_k$ by Remark \ref{remark: classes of simple varieties for different groups}.
\end{proof}

\subsection{Affine Schubert varieties}\label{section: affine schubert varieties}
Let $k$ be any base field. Consider the split group $GL_n$ over $k$ with its diagonal torus $T$. The motivating examples of our interest are the affine Schubert varieties in the $GL_n$ affine Grassmannian $\Gr$. These are certain equivariant singular projective varieties; they are of considerable interest in geometric representation theory. We show in Lemma \ref{example: affine schubert varieties are simple} that they lie in $\eB^{\GL_n}_k$, and in particular in $\eB^T_k$.

Before going to the proof, we quickly recall the relevant geometric input. See \cite{Zhu15} for details. Also \cite{BS16} is helpful (although the geometric setup is different, their description of the Demazure morphism is applicable and very clear).

\begin{recollection}[Affine Schubert varieties]
 The $\GL_n$ affine Grassmannian \cite[\S 1.1]{Zhu15} is the ind-scheme representing the moduli problem 
\begin{equation*}
   \Gr: R \mapsto \{ \Lambda \subseteq R\llp t \rrp^{\oplus n} \mid \Lambda \text{ full } R\llb t \rrb\text{-lattice} \}.
\end{equation*}
Let $\Lambda_0 = R\llb t \rrb^{\oplus n} \subseteq R\llp t \rrp^{\oplus n}$ be the standard lattice. Given any dominant coweight $\mu \in X^+_{\bullet}(T)$, we obtain the corresponding affine Schubert variety $X_{\leq \mu}$. By definition \cite[\S 2.1]{Zhu15}, this is the reduced subscheme of $\Gr$ on the closed subfunctor
\begin{equation*}
  R \mapsto \{ \Lambda \subseteq R\llp t \rrp^{\oplus n} \mid \Lambda \text{ full } R\llb t \rrb\text{-lattice}, \text{ the relative position of } \Lambda \text{ to } \Lambda_0 \text{ is } \leq \mu\}.
\end{equation*}
It is a projective variety over $k$ with an action of the positive loop group $\lop GL_n$. Hence it carries an action of $GL_n$ and in particular of $T$.
\end{recollection}

\begin{recollection}[The tautological bundle]
The variety $X_{\leq \mu}$ carries a natural vector bundle $\eE = \eE_{\leq \mu}$ defined as follows (see also \cite[\S 1.1]{Zhu15} and \cite[\S 7]{BS16}). We write $\mo^{\fl}(R\llb t \rrb)$ for the abelian category of finite length $R\llb t \rrb$-modules. Without loss of generality, assume that we can write $\mu = \mu_1 + \dots + \mu_{\ell}$ as a sum of $\ell$ fundamental dominant coweights $\mu_i \in X^{+}_{\bullet}(T)$ and call $\ell = \lg(\mu)$ the length of $\mu$.  Under this assumption, any $\Lambda \in X_{\leq \mu}(R)$ lies inside $\Lambda_0$. (The assumption is indeed harmless -- all $\mu \in X^{+}_{\bullet}(T)$ are of this form up to adding a negative multiple $-m \omega_n$ of the determinant coweight, i.e. replacing $\Lambda_0$ by $t^{-m}\Lambda_0$.) We then define
\begin{align*}
  \eE: X_{\leq \mu}(R) & \to \mo^{\fl}(R\llb t \rrb) \\
     \Lambda & \mapsto \Lambda_0 / \Lambda.
\end{align*}
This gives a vector bundle on $X_{\leq \mu}$, carrying a nilpotent operator $t: \eE \to \eE$. It is naturally $\lop GL_n$-equivariant. Consider now
\begin{equation*}
  \eE_{\bullet} := [ \eE \xrightarrow{t} \eE ] \ \qquad \text{and} \qquad \eF := \eH_0(\eE_{\bullet}) = \coker ( \eE \xrightarrow{t} \eE ).
\end{equation*}
Then $\eE_{\bullet}$ is a perfect complex of Tor amplitude $[1, 0]$ resolving the coherent sheaf $\eF$.
\end{recollection}

\begin{recollection}[Affine Demazure resolutions]
Given any sequence $\mu_{\bullet} = (\mu_1, \dots, \mu_{\ell})$ summing up to $\mu$, we get a corresponding (partial) affine Demazure resolution $Y_{\leq \mu_{\bullet}}$, see \cite[(2.1.17)]{Zhu15}. This is the underlying reduced scheme of the moduli problem
\begin{equation*}
  R \mapsto \{ (\Lambda_{\ell}, \dots, \Lambda_0) \mid \Lambda \subseteq R\llp t \rrp^{\oplus n} \text{ full } R\llb t \rrb\text{-lattice}, \text{ the relative position of } \Lambda_i \text{ to } \Lambda_{i-1} \text{ is } \leq \mu_{i}\}.
\end{equation*}
The {\it convolution map} $f: Y_{\leq \mu_{\bullet}} \to X_{\leq \mu}$ given by $(\Lambda_{\ell}, \dots, \Lambda_0) \mapsto \Lambda_{\ell}$ is equivariant, proper, birational.

In particular, write $\mu = \mu_1 + \lambda$ with $\mu_1$ minuscule and $\lg(\lambda) < \lg(\mu)$. Then the convolution map $f$ is given by the structure map of the stratified Grassmannian bundle
\begin{equation*}
    Y_{\leq (\mu_1, \lambda)} = \Grass_{X_{\leq \mu}}(\eF, \mu_1) \xrightarrow{f} X_{\leq \mu}.
\end{equation*}
On the other hand, since $\mu_1$ is minuscule, $Y_{\leq \mu_{\bullet}}$ is an honest Grassmannian bundle $\Grass_{X_{\leq \lambda}}(V, \mu_1)$ over $X_{\leq \lambda}$ associated to an $n$-dimensional vector space $V$ with a $\lop GL_n$-action.

Altogether, we obtain the convolution diagram
\begin{equation}\label{diagram: partial demazure resolution as stratified grassmannian bundle}
   X_{\leq \mu} \xleftarrow{f} Y_{\leq \mu_{\bullet}} \xrightarrow{g} X_{\leq \lambda}
\end{equation}
which is $\lop GL_n$-equivariant, and in particular $GL_n$-equivariant.
\end{recollection}

We are now ready to deduce that affine Schubert varieties are simple.
\begin{lemma}[Affine Schubert varieties are simple]\label{example: affine schubert varieties are simple}
Let $X_{\leq \mu}$ be any affine Schubert variety in the $GL_n$ affine Grassmannian, then 
$$X_{\leq \mu} \in \eB^{\GL_n}_k.$$
In particular, $X_{\leq \mu} \in \eB^{T}_k$.
\end{lemma}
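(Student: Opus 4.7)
The plan is to proceed by induction on the length $\lg(\mu)$. Before starting the induction, I would reduce to the case where $\mu$ admits a decomposition $\mu = \mu_1 + \dots + \mu_{\ell}$ as a sum of $\ell = \lg(\mu)$ fundamental dominant coweights (so in particular $\Lambda \subseteq \Lambda_0$ for every $\Lambda \in X_{\leq \mu}$). The general case reduces to this one because for any $m \in \Z$, translation $\Lambda \mapsto t^m \Lambda$ provides a $GL_n$-equivariant isomorphism $X_{\leq \mu} \xrightarrow{\simeq} X_{\leq \mu + m\omega_n}$: scalar multiplication by $t^m$ commutes with the $GL_n$-action on $R\llp t \rrp^{\oplus n}$.

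The base case $\lg(\mu) = 0$ gives $X_{\leq 0} = \pt$, which lies in $\eB^{GL_n}_k$ by closure property \eqref{closure property: the point}.

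For the inductive step, I would write $\mu = \mu_1 + \lambda$ with $\mu_1$ minuscule and $\lg(\lambda) < \lg(\mu)$; by the inductive hypothesis $X_{\leq \lambda} \in \eB^{GL_n}_k$. The key idea is to feed the $GL_n$-equivariant convolution diagram \eqref{diagram: partial demazure resolution as stratified grassmannian bundle} into the closure properties defining $\eB^{GL_n}_k$. Since $\mu_1$ is minuscule, the map $g: Y_{\leq (\mu_1, \lambda)} \to X_{\leq \lambda}$ is an honest equivariant Grassmannian bundle, so closure property \eqref{closure property: projective bundles} upgrades $X_{\leq \lambda} \in \eB^{GL_n}_k$ to $Y_{\leq (\mu_1, \lambda)} \in \eB^{GL_n}_k$. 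On the other hand, $f: Y_{\leq (\mu_1, \lambda)} \to X_{\leq \mu}$ identifies with the stratified Grassmannian bundle $\Grass_{X_{\leq \mu}}(\eF, \mu_1)$ on the equivariant coherent sheaf $\eF = \coker(t: \eE \to \eE)$, which is presented as a cokernel of a map of equivariant vector bundles and has rank $\geq \mu_1$ everywhere (since every fiber of $f$ is nonempty). Closure property \eqref{closure property: stratified projective bundles} then delivers $X_{\leq \mu} \in \eB^{GL_n}_k$, closing the induction. The $T$-equivariant statement follows by restriction via Remark \ref{remark: classes of simple varieties for different groups}.

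I do not expect a serious obstacle: the induction is structurally identical to the one in the proof of Lemma \ref{lemma: finite schubert varieties are simple}, and the affine Grassmannian geometry needed -- the identification of $Y_{\leq (\mu_1, \lambda)}$ as both an honest and a stratified Grassmannian bundle, together with the equivariance of $\eE$ and $\eF$ -- is already packaged in the recollections drawn from \cite{Zhu15, BS16}. The only mildly delicate point is verifying the rank condition on $\eF$, which amounts to nonemptiness of the fibers of the surjection $f$ and follows essentially tautologically.
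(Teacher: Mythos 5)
Your proposal is correct and follows essentially the same route as the paper: induction on $\lg(\mu)$, using the convolution diagram \eqref{diagram: partial demazure resolution as stratified grassmannian bundle} so that $g$ is an honest equivariant Grassmannian bundle (closure property \eqref{closure property: projective bundles}) and $f$ is a stratified Grassmannian bundle on $\eF = \coker(t:\eE\to\eE)$ (closure property \eqref{closure property: stratified projective bundles}), then restricting to $T$ via Remark \ref{remark: classes of simple varieties for different groups}. Your explicit preliminary reduction via translation by a central power of $t$, and your base case $\lg(\mu)=0$ instead of $\lg(\mu)\leq 1$, are only cosmetic variations on the paper's argument.
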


\begin{proof}
We will prove that $X_{\leq \mu} \in \eB^{\GL_n}_k$ by induction on the length of $\mu$.

If $\lg(\mu) \leq 1$, then $X_{\leq \mu}$ is either the point $\pt$ or an honest Grassmannian $\Grass_k(V, \mu_i)$ associated to an $n$-dimensional vector space $V$ with an action of $GL_n$. We thus have $X_{\leq \mu} \in \eB^{\GL_n}_k$ in this case by the closure properties \eqref{closure property: the point} and \eqref{closure property: projective bundles}.

Now assume $\lg(\mu) \geq 2$ and write $\mu = \mu_i + \lambda$ with $\mu_i$ minuscule and $\lg(\lambda) < \lg(\mu)$. We get the convolution diagram \eqref{diagram: partial demazure resolution as stratified grassmannian bundle}

By induction, we already know that $X_{\leq \lambda} \in \eB_k^{\GL_n}$. Since $Y_{\leq \mu_{\bullet}}$ is an honest Grassmannian bundle over it, we deduce $Y_{\leq \mu_{\bullet}} \in \eB^{\GL_n}_k$ by the closure property \eqref{closure property: projective bundles}. But now $Y_{\leq \mu}$ is a stratified Grassmannian bundle over $X_{\leq \mu}$, so we deduce $X_{\leq \mu} \in \eB^{\GL_n}_k$ by the closure property \eqref{closure property: stratified projective bundles}. This completes the inductive step.

In particular, $X$ lies in $\eB^T_k$ and $\eB_k$ by Remark \ref{remark: classes of simple varieties for different groups}
\end{proof}

\begin{remark}
The same argument goes through with respect to the groups $T \times \Gm \leq GL_n \times \Gm$ extended by the loop rotation action of $\Gm$. 
\end{remark}

\appendix
\section{Cdh-sheafified (negative) cyclic homology as truncating invariant}\label{appendix: cdh sheaffified HH}\label{appendix: cdh sheaffified negative cyclic homology}

For this section, let $k = \Q$ or a finite field extension thereof. It is an important consequence of \cite{KST16, LT19, CHSW08} that on schemes, $K(-)$ is given by gluing the homotopy-invariant contribution of $KH(-)$ with $HC^{-}(-/k)$ along their maps to the cdh-sheaffification $\LHCn(-/k)$; this works more generally in any characteristic using $TC(-)$. See \cite[Diagrams (3.9) and (4.1)]{EM23} for an overview. Consequently, $\LHCn(-/k)$ extends to a truncating invariant on $\Cat^{\perf}_k$ as in \cite[Lemma 5.7]{EM23}; also see \cite[\S 8.1]{EKS25}. 

This not only yields a clear extension to the equivariant setup, but further allows arguments based on semi-orthogonal decompositions (whose pieces do not a priori need to come from geometry). Such arguments are useful in practice, so we record the details. The same trick works for cyclic homology; we include this as well.

\begin{construction}\label{construction: cdh sheaffified localizing invariants}
Define the localizing invariant $\LHCn(-/ k): \Cat_{k}^{\perf} \to \Sp$ by the pushout
\begin{equation*}
    \begin{tikzcd}
        K(-) \arrow[r] \arrow[d] & HC^{-}(-/ k) \arrow[d] \\
        KH(-) \arrow[r] & \LHCn(-/ k).
    \end{tikzcd}
\end{equation*}
\end{construction}

We use the standard conventions on (co)homological shifts. Namely, the shift $[1]$ corresponds to the suspension $\Sigma$; it increases homological degree (and decreases cohomological degree) by $1$.

\begin{construction}
We define $\LHC(-/k): \Cat_{k}^{\perf} \to \Sp$ by the $[-2]$-shift of the bottom fiber sequence in
\begin{equation*}
    \begin{tikzcd}
        HC^{-}(-/ k) \arrow[r] \arrow[d] & HP(-/ k) \arrow[r] \arrow[d] & HC(-/ k)[2] \arrow[d] \\
        \LHCn(-/ k) \arrow[r] & HP(-/ k) \arrow[r] & \LHC(-/ k)[2].
    \end{tikzcd}
\end{equation*}
Here, the map $ \LHCn(-/ k) \to HP(-/ k)$ comes from the defining pushout for $\LHCn(-/ k)$ by noting that that the composition $K(-) \to HC^{-}(-/ k) \to HP(-/ k)$ factors through $K(-) \to KH(-)$ since $KH(-)$ is the initial $\A^1$-invariant localizing invariant on $\Cat_k^{\perf}$ by \cite[Example 3.2.6.(2)]{EKS25} and $HP(-/ k)$ is $\A^1$-homotopy invariant on $\Cat_k^{\perf}$ by the discussion \cite[Example 3.2.7.(3)]{EKS25}.
\end{construction}

\begin{remark}
It is an interesting question whether one can similarly define $\LHH(-/k)$ as a truncating invariant of small stable $k$-linear categories. 
\end{remark}

\begin{observation}\label{observation: cdh sheaffified versions are truncating}
The above defined $\LHCn(-/ k)$ and $\LHC(-/ k)$ are truncating invariants on $\Cat_k^{\perf}$. 
\end{observation}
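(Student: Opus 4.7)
The plan is to reduce both claims to a common exactness chase. For each connective $\mathbb{E}_1$-ring spectrum $A$ over $k$, let $F_A$ denote the functor on localizing invariants defined by
\begin{equation*}
F_A(E) := \fib\bigl(E(A) \to E(\pi_0 A)\bigr).
\end{equation*}
As the fiber of two exact functors, $F_A$ is itself exact and therefore preserves pushouts and fiber sequences in the localizing-invariant variable. By definition, $E$ is truncating iff $F_A(E) \simeq 0$ for every such $A$; the question is therefore reduced to computing $F_A$ of the two defining diagrams.

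For $\LHCn(-/k)$ I would apply $F_A$ to Construction \ref{construction: cdh sheaffified localizing invariants}. The corner $F_A(KH) \simeq 0$ by truncatingness of $KH(-)$, while the top arrow $F_A(K) \to F_A(HC^{-})$ is an equivalence because $\fib(K \to HC^{-})$ is truncating on $\Cat_k^{\perf}$ for $k \supseteq \Q$ -- this is the rational form of the Goodwillie--Jones input \cite[Corollary 3.9]{LT19}, already used in \S \ref{section: rational goodwillie-jones trace in degree zeo}. A pushout of an equivalence along any map is again an equivalence, so $F_A(\LHCn) \simeq F_A(KH) \simeq 0$.

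For $\LHC(-/k)$ I would apply $F_A$ to the defining fiber sequence $\LHCn(-/k) \to HP(-/k) \to \LHC(-/k)[2]$. The first term vanishes by the previous paragraph; the middle term vanishes because $HP(-/k)$ is truncating (as recalled in \S \ref{section: localizing and truncating invariants}). Hence $F_A(\LHC)[2] \simeq 0$, so $F_A(\LHC) \simeq 0$ as shifts are invertible in stable categories.

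The only substantive input is the truncatingness of the Goodwillie--Jones trace fiber over $\Q$; everything else is formal diagram manipulation. The main obstacle is therefore conceptual rather than technical: one has to recognize that the defining pushout for $\LHCn$ is arranged precisely so that $F_A$ collapses it -- via $F_A(KH) \simeq 0$ -- onto the Land--Tamme/Goodwillie equivalence $F_A(K) \simeq F_A(HC^{-})$, after which truncatingness of $\LHC$ drops out of the defining fiber sequence with $HP$.
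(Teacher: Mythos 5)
Your proposal is correct and takes essentially the same route as the paper: both rest on the truncatingness of $KH(-)$, $HP(-/k)$ and of $K^{\inf}(-)=\fib(K(-)\to HC^{-}(-/k))$ from \cite{LT19}, combined with formal exactness of the defining pushout and fiber sequence. Your $F_A$-packaging is just a reformulation of the paper's observation that the pushout square identifies $\fib(K\to HC^{-})$ with $\fib(KH\to \LHCn)$, after which $\LHCn$ and then $\LHC$ are truncating by two-out-of-three.
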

\begin{proof}
First note that both $KH(-)$ and $HP(-/ k)$ are truncating invariants on $\Cat_{k}^{\perf}$ by \cite[Proposition 3.14 and Corollary 3.11]{LT19} and \cite[proof of Lemma 5.7]{EM23}. Furthermore, looking at the first pushout diagram, the fiber $K^{\inf}(-) = \fib(K(-) \to HC^{-}(-/ k)) \simeq \fib(KH(-) \to \LHCn(-/ k))$ is a truncating invariant as well. From the fiber sequence
$K^{\inf}(-) \to KH(-) \to \LHCn(-/ k)$ we deduce that $\LHC^{-}(-/ k)$ is truncating. 
Since $HP(-/ k)$ is truncating, the shifted cofiber $\LHC(-/ k)$ of $\LHCn(-/ k) \to HP(-/ k)$ is also truncating.
\end{proof}

\begin{observation}\label{observation: cdh sheaffified versions are the same on regular schemes}
If $\eX = X/G$ is a global quotient of a regular scheme $X$ by a nice group $G$, the natural maps induce equivalences
\begin{align*}
HC^{-}(\eX / k) &\xrightarrow{\simeq} \LHCn(\eX / k),\\
\HC(\eX / k) &\xrightarrow{\simeq} \LHC(\eX / k).   
\end{align*}
\end{observation}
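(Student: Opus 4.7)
The plan is to reduce both equivalences to a single input: that $K(\eX) \to KH(\eX)$ is an equivalence under the stated hypotheses. Indeed, for the first equivalence, the defining square of Construction~\ref{construction: cdh sheaffified localizing invariants} evaluated on $\eX$,
\begin{equation*}
    \begin{tikzcd}
        K(\eX) \arrow[r] \arrow[d] & HC^{-}(\eX/ k) \arrow[d] \\
        KH(\eX) \arrow[r] & \LHCn(\eX/ k),
    \end{tikzcd}
\end{equation*}
is a pushout of spectra, hence also a pullback (spectra being stable). Thus the right vertical is an equivalence if and only if the left vertical is. So it is enough to show $K(\eX) \xrightarrow{\simeq} KH(\eX)$ when $\eX = X/G$ is a global quotient of a regular scheme $X$ by a nice group $G$.

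The latter is a form of $\mathbb{A}^1$-homotopy invariance of $K$-theory on regular quotient stacks with nice stabilizers. For quotient stacks $X/G$ with $X$ regular and $G$ nice, the natural map $K(X/G \times \mathbb{A}^1) \leftarrow K(X/G)$ is an equivalence by \cite[Theorem 1.3.(2)]{KR18} and \cite[Theorem 1.3.(4)]{Hoy21} (which is precisely how the analogous comparison is used in Example~\ref{example: parshins property for simple varieties}). Since $KH$ is the initial $\mathbb{A}^1$-invariant localizing invariant under $K$, the map $K(\eX) \to KH(\eX)$ is then an equivalence; this was already observed in the proof of Observation~\ref{observation: stability of the Parshin property}.

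For the second equivalence, compare the two defining fiber sequences side by side:
\begin{equation*}
    \begin{tikzcd}
        HC^{-}(\eX / k) \arrow[r] \arrow[d, "\simeq"] & HP(\eX / k) \arrow[r] \arrow[d, equal] & HC(\eX / k)[2] \arrow[d] \\
        \LHCn(\eX / k) \arrow[r] & HP(\eX / k) \arrow[r] & \LHC(\eX / k)[2].
    \end{tikzcd}
\end{equation*}
The left vertical is an equivalence by the first part, and the middle vertical is the identity. Since both rows are fiber sequences, the right vertical is an equivalence as well; desuspending once gives the desired equivalence $\HC(\eX/k) \xrightarrow{\simeq} \LHC(\eX/k)$.

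The only nontrivial step is the input $K(\eX) \simeq KH(\eX)$; everything else is a purely formal manipulation of the defining diagrams. The hard part is therefore merely to locate the right statement of $\mathbb{A}^1$-invariance of equivariant $K$-theory for regular schemes acted on by nice groups in the literature, which is precisely what \cite{KR18, Hoy21} provide.
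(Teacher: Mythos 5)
Your argument is correct and is essentially the paper's own proof: the paper likewise observes that the defining pushout square and the defining fiber sequence propagate the single input $K(\eX)\xrightarrow{\simeq} KH(\eX)$ to both comparison maps, and it obtains that input by citing \cite[Theorem 1.3, (1) and (4)]{Hoy21} directly. The one place where your write-up is slightly glib is the deduction of $K(\eX)\simeq KH(\eX)$ from $\A^1$-invariance: the universal property of $KH$ as the initial $\A^1$-invariant localizing invariant under $K$ does not by itself imply that $K(\eX)\to KH(\eX)$ is an equivalence just because $K$ happens to be $\A^1$-invariant on the single object $\eX$; one needs in addition the standard presentation of $KH(\eX)$ as the realization of $K(\eX\times\Delta^\bullet_{\mathrm{alg}})$ (equivalently, $\A^1$-invariance for all the products $\eX\times\A^n$, which are again regular quotient stacks with nice stabilizers, so the argument does close up). Alternatively, and more economically, the equivalence $K\simeq KH$ on regular quotient stacks with nice stabilizers is exactly the statement available in the literature, which is what the paper quotes; your citation of \cite[Theorem 1.3.(2)]{KR18} and \cite[Theorem 1.3.(4)]{Hoy21} is aimed at the right results, though the relevant parts are those asserting the $K\simeq KH$ comparison in the regular case rather than the $\Z[\tfrac1p]$-linear statement used in Example \ref{example: parshins property for simple varieties}.
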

\begin{proof}
If $\eX = X/G$ is a global quotient of a regular scheme by a nice group, the map $K(\eX) \to KH(\eX)$ is an equivalence by \cite[Theorem 1.3, (1) and (4)]{Hoy21}; the result then propagates through the defining pushouts and fiber sequences.    
\end{proof}

On schemes, the above defined invariants $\LHCn(-/k)$ and $\LHC(-/k)$ recover the usual cdh-sheaffifications of $\HC^{-}(-/k)$ and $\HC(-/k)$. In fact, this works equivariantly with respect to nice groups.

\begin{observation}\label{observation: lifting restrict to cdh sheaffification}
Let $G$ be a nice group. When restricted to invariants of $\Sch^G_{k}$, the above defined truncating invariants $(\LHCn)^G(-/ k)$ and $\LHC^G(-/ k)$ are given by the actual cdh sheafifications of $(HC^{-})^G(-/ k)$ and $HC^G(-/ k)$.
\end{observation}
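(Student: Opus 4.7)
The plan is to show that the truncating extensions $(\LHCn)^G$ and $\LHC^G$ from Construction \ref{construction: cdh sheaffified localizing invariants} satisfy the defining universal property of cdh sheafification of the classical invariants $(HC^{-})^G$ and $HC^G$ when restricted to $\Sch^G_k$. I focus on the case of $(\LHCn)^G$; the $\LHC^G$ statement follows by an analogous argument, or alternatively from the fiber sequence $\LHCn \to HP \to \LHC[2]$ after noting that $HP^G$ is already cdh-local (as it is truncating).

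Two ingredients drive the argument. First, $(\LHCn)^G$ is cdh-local on $\Sch^G_k$: as a truncating invariant by Observation \ref{observation: cdh sheaffified versions are truncating}, it sends $G$-equivariant abstract blowup squares to homotopy fiber squares by \cite[Corollary 5.2.6]{ES21} and satisfies Nisnevich descent as any localizing invariant does. Second, the natural comparison map $(HC^{-})^G \to (\LHCn)^G$ arising from the defining pushout of Construction \ref{construction: cdh sheaffified localizing invariants} is an equivalence on smooth $X \in \Sch^G_k$, since $X/G$ is then a nice quotient of a regular scheme and Observation \ref{observation: cdh sheaffified versions are the same on regular schemes} applies.

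From these two ingredients the universal property of cdh sheafification yields a canonical factorization $L_{\cdh}(HC^{-})^G(-) \to (\LHCn)^G(-)$. To prove it is an equivalence on every $X \in \Sch^G_k$, let $C$ denote its fiber. Then $C$ is cdh-local (being the fiber of a map of cdh sheaves), and I claim it vanishes on smooth $G$-schemes: indeed, $(HC^{-})^G$ already satisfies cdh descent on smooth $G$-schemes by the classical smooth blowup formula for $HH$ and $HC^{-}$, so $L_{\cdh}(HC^{-})^G(X) \simeq (HC^{-})^G(X)$ on such $X$, and the latter agrees with $(\LHCn)^G(X)$ by the second ingredient above. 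The reduction from arbitrary $X$ to this smooth base case then proceeds by $G$-equivariant resolution of singularities, available in characteristic zero for nice groups: any $X \in \Sch^G_k$ admits a $G$-equivariant proper birational morphism from a smooth $G$-scheme, fitting into a $G$-equivariant abstract blowup square whose $Z$ and $E$ have strictly smaller dimension than $X$. Cdh descent for $C$, combined with induction on $\dim X$, propagates the vanishing of $C$ to all of $\Sch^G_k$.

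The main obstacle is this final propagation step, which hinges on the existence of $G$-equivariant resolutions of singularities. For nice $G$ in characteristic zero this is standard and is the usual input in analogous cdh-sheafification identifications in the literature; see for instance \cite{EKS25, EM23}. The care required is to formulate the induction throughout on $G$-equivariant quotient stacks and to keep tracking the equivariant versions of cdh descent from \cite{ES21} (and \cite{LS25} for the linearly reductive extension).
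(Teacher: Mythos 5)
Your proposal is correct and follows essentially the same route as the paper: use that the newly defined invariants are truncating (hence cdh-local on $\Sch^G_k$) to obtain comparison maps with the genuine cdh sheafifications, identify them on smooth/regular $G$-schemes via Observation \ref{observation: cdh sheaffified versions are the same on regular schemes}, and conclude by $G$-equivariant resolution of singularities in characteristic zero together with cdh descent (as in \cite[Corollary 1.4]{Hoy21}). Your explicit induction on dimension with the fiber $C$, and the appeal to the smooth blowup formula to control the sheafification on smooth schemes, are just a spelled-out version of the paper's appeal to resolution plus cdh descent.
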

\begin{proof}
Since the newly defined invariants $\LHCn(-/ k)$, $\LHC(-/ k)$ are truncating by Observation \ref{observation: cdh sheaffified versions are truncating}, they in particular satisfy cdh descent on $\Sch^G_k$ by \cite{LT19, ES21}, hence they get natural maps from the actual cdh-sheaffifications of $HC^{-}(-/ k)$ and $HC(-/ k)$ on $\Sch^G_k$. These maps are equivalences on regular schemes by Observation \ref{observation: cdh sheaffified versions are the same on regular schemes}. 
Since we are working over a field of characteristic zero (a finite field extension of $\Q$), we conclude via cdh descent from the existence of $G$-equivariant resolutions of singularities -- see \cite[\S 6.1]{EKS25} for a brief overview; this argument already appears in \cite[Corollary 1.4]{Hoy21}.
\end{proof}
\section{Variant for actions of linearly reductive groups}\label{appendix: actions of linearly reductive G}
Under further mild technical assumptions, our Theorems \ref{theorem: vanishing of truncating invariants}, \ref{theorem: a degree zero version}, \ref{theorem: computation of truncating invariants} and their consequences hold with respect to any linearly reductive group $G$ over $k$. As explained in Remark \ref{remark: conditionally G lineraly reductive}: this is irrelevant if $\chara k = p$, but considerably extends the applicability of our results to the case of any reductive group if $\chara k = 0$. For instance, this applies to the action of $GL_n$ on affine Schubert varieties, which are simple by Lemma \ref{example: affine schubert varieties are simple}. 

The aim of this section is to outline this variant, fully relying on the methods developed in \cite{LS25, KR21, Kha18, BKRS22}. Since the arguments need extra technical care, we decided to leave them as an appendix. Since the only extra content appears when $\chara k = 0$, we will assume this throughout.

\begin{setup}
Throughout this section, let $G$ be a linearly reductive group over a field $k$ of $\chara k = 0$.

For simplicity, we will state our results for the subcategory of quasi-projective $G$-equivariant varieties $\Sch^{G, \qp}_k$ (but see Recollections \ref{recollection: connective perfect generation}, \ref{recollection: linearly scalloped stacks} for relevant weaker conditions). We denote by $\eB^{G, \qp}_k := \eB^G_k \cap \Sch^{G, \qp}_k$ and $\eC^{G, \qp}_k := \eC^G_k \cap \Sch^{G, \qp}_k$ the classes of quasi-projective $G$-equivariant simple varieties over $k$.
\end{setup}

\begin{recollection}\label{recollection: connective perfect generation}
Firstly, let us recall the notion of stacks satisfying \textit{connective perfect generation} from \cite[Definition 6.21]{LS25}. In particular, suppose $X$ is a $G$-equivariant qcqs derived scheme over $k$ such that $X/G$ satisfies the resolution property \cite[Definition 6.15]{LS25}; then $X/G$ satisfies connective perfect generation. Indeed, it is a quasi-geometric stack \cite[Definition 6.1]{LS25} with affine stabilizers over a field of characteristic zero by our setup, hence of finite cohomological dimension \cite[Definition 6.10.(1), Example 6.13]{LS25}, so we conclude by \cite[Example 6.19]{LS25}. Let us further remark that $X/G$ satisfies perfect generation \cite[Definition 6.10.(2), Example 6.19]{LS25}.

Specifically, for any equivariant quasi-projective variety $X \in \Sch^{G, \qp}_k$, the quotient stack $X/G$  satisfies connective perfect generation (as in Example \cite[Example 6.22.(2)]{LS25}). Also see \cite[end of Example 6.22]{LS25} for the expected generality.

Furthermore, the resolution property for a quasi-geometric derived stack $\eX$ with affine stabilizers only depends on its classical truncation $\eX_{\cl}$. Specifically, any derived enhancement of a quotient stack $X/G$ for some $X \in \Sch^{G, \qp}_k$ as above satisfies connective perfect generation. 
%
%
%
(In this way, connective perfect generation holds for derived projectivizations \cite{Jia22a, Jia22b, Jia23} of complexes over such $X/G$, since their underlying classical truncations are again of the form $Y/G$ for some $Y \in \Sch^{G, \qp}_k$.)
\end{recollection}



\begin{proposition}\label{proposition: nilinvariance for linearly reductive G}
Let $E(-)$ be a truncating $k$-linear localizing invariant and let $\eX$ be a noetherian quasi-geometric stack satisfying connective perfect generation. Then $E(\eX) \simeq E(\eX_{\cl})$.
\end{proposition}
\begin{proof}
This is \cite[Theorem 6.35]{LS25}.
\end{proof}

\begin{recollection}\label{recollection: linearly scalloped stacks}
Further recall the notion of \textit{linearly scalloped stacks} from \cite[Definition A.1]{KR21}. Again, a quotient $X/G$ of a quasi-projective variety $X \in \Sch^{G, \qp}_k$ by a linearly reductive group $G$ falls into this class \cite[Example A.4]{KR21}.
\end{recollection}

\begin{proposition}\label{proposition: cdh for linearly reductive G}
Let $E(-)$ be a truncating $k$-linear localizing invariant. Then it satisfies cdh descend on $\Sch^{G, \qp}_k$.
\end{proposition}
\begin{proof}
We appeal to the cdh descend criterion \cite[Theorem 5.6]{Kha18}, which is applicable in the current stacky setup by \cite[Remark 5.11.(iii) and \S 5.3.4]{Kha18}. The value of $E$ on the empty scheme is trivial, giving (i). It moreover satisfied Nisnevich descent in the generality of stacks satisfying perfect generation by \cite[Corollary 4.1.2]{BKRS22}, giving (ii). The closed descent condition (iii) follows by the finite case of \cite[Theorem C]{BKRS22}, valid in the generality of linearly scalloped stacks by \cite[Remark 0.0.9]{BKRS22}; note that closed squares are finite abstract blowup squares by design \cite[Example 5.4]{Kha18}. The blow-up condition (iv) follows by combining \cite[Theorem A]{Kha18} with the derived nilinvariance from Proposition \ref{proposition: nilinvariance for linearly reductive G}, analogously to \cite[Example 5.8]{Kha18}; the derived blowups in question will satisfy connective perfect generation by the analysis from Recollection \ref{recollection: connective perfect generation}.
\end{proof}

\begin{remark}
In the special case of $E=KH$, Propositions \ref{proposition: nilinvariance for linearly reductive G}, \ref{proposition: cdh for linearly reductive G} for linearly scalloped stacks are fully covered by \cite[Theorem F, Theorem G, \S A.8.1(b)]{KR21}. In their treatment, the arguments covering derived nilinvariance are more specific to $KH$.
\end{remark}

\begin{remark}\label{remark: any affine group}
We may further bootstrap to obtain the above for any affine group scheme $G$ of finite type over $k$ of $\chara k = 0$. Indeed, such $G$ embeds into some $GL_n$ for $n$ sufficiently large. We may then formally switch from $G$-varieties to $GL_n$-varieties through
$\Sch^{G, \qp}_k \to \Sch^{GL_n, \qp}_k$, $X \mapsto X\times^G GL_n$ 
and then apply the above results with respect to the linearly reductive group $GL_n$.
\end{remark}

With the above facts in hand, we can now state the variant of our results for actions of any linearly reductive $G$.
\begin{theorem}
Let $G$ be a linearly reductive group over $k$. Then:
\begin{itemize}
    \item Theorem \ref{theorem: vanishing of truncating invariants} holds for the class $\eC^{G, \qp}_k$,
    \item Theorems \ref{theorem: a degree zero version}, \ref{theorem: computation of truncating invariants} holds for the class $\eB^{G, \qp}_k$.
\end{itemize}
\end{theorem}
\begin{proof}
We need to prove stability under the closure properties defining simple varieties. The proofs of \eqref{closure property: the point} and \eqref{closure property: projective bundles} work as before.

For \eqref{closure property: stratified projective bundles}, consider $X$, $Y= \P_X(\eE_{\bullet})$, $Y_{\cl} = \P_X(\eF)$. We have $E^G(Y) \simeq 0 \implies E^G(X) \simeq 0$ by semi-orthogonal decompositions as before. Moreover, as $E(-)$ is truncating, we have $E^G(Y_{\cl}) \simeq E^G(Y)$ by Proposition \ref{proposition: nilinvariance for linearly reductive G}, which is applicable since all derived stacks in question satisfy connective perfect generation by Recollection \ref{recollection: connective perfect generation}. This proves \eqref{closure property: stratified projective bundles part a}, while \eqref{closure property: stratified projective bundles part b} follows by Lemma \ref{lemma: ommiting flag schemes}.

For \eqref{closure property: 3-out-of-4 for split abstract blowups} and \eqref{closure property: 3-out-of-4 for abstract blowups} we need to see that $E$ sends $G$-equivariant abstract blowup squares to homotopy fiber squares; this is covered by Proposition \ref{proposition: cdh for linearly reductive G}
\end{proof}

Through this variant, the results of \S \ref{section: rational goodwillie--jones trace in degree zero} and \S \ref{section: homotopy invariant K-theory and equivariant formality} work for actions of linearly reductive groups on quasi-projective varieties. For example, this applies to affine Schubert varieties from Lemma \ref{example: affine schubert varieties are simple} with respect to $G=GL_n$ if $\chara k = 0$.

\printbibliography

\bigskip
\bigskip
\bigskip
\bigskip

\noindent Jakub Löwit, \newline
Institute of Science and Technology Austria (ISTA), \newline
Am Campus 1, \newline 
3400 Klosterneuburg, \newline
Austria \newline
\texttt{jakub.loewit@ist.ac.at}

\end{document}